	\def\alpha{alpha}%
	\def\infty{infty}
	\def\Theta{Theta}
	\def\texttt#1{<#1>}%
\DeclarePairedDelimiter{\diagfences}{(}{)}
\newcommand{\diag}{\operatorname{diag}\diagfences}
\newtheorem{thm}{Theorem}[section]
\newtheorem{lem}{Lemma}[section]
\newtheorem{definition}{Definition}[section]
\newtheorem{proof*}{proof}[section]
\newtheorem{cor}{Corollary}[section]
\newtheorem{con}{Conjecture}[section]
\newtheorem{problem}{Problem}
\newtheorem{remark}{Remark}
\newtheorem{fact}{Fact}
\newcommand{\Rose}{\vcenter{\hbox{\SixFlowerPetalDotted}}}
\DeclareRobustCommand\Inf{%
	\mathop{
		\kern-2pt\vcenter{\hbox{
				\tikz[anchor=base,  baseline=-3pt, line width = 0.6pt, xscale=0.8, yscale=0.8]{
					\draw (-0.4em, 0) circle (0.2em and 0.6em);
					\draw (0em, 0) circle (0.2em and 0.6em);
					\draw (0.4em, 0) circle (0.2em and 0.6em);
				}
		}}\kern-2 pt}
}
\DeclareRobustCommand\INF{%
	\kern-2pt\vcenter{\hbox{
			\tikz[anchor=base,  baseline=-3pt, line width = 0.4pt,  xscale=0.8, yscale=0.8]{
				\draw[double,  double distance=0.2pt] (-0.4em, 0) circle (0.2em and 0.6em);
				\draw[double,  double distance=0.2pt] (0em, 0) circle (0.2em and 0.6em);
				\draw[double,  double distance=0.2pt] (0.4em, 0) circle (0.2em and 0.6em);
			}
	}}\kern-2 pt
}
\title{Some $\alpha$-spectral extremal results for some digraphs}
\author{
Haiying Shan\thanks{\footnotesize School of Mathematical Sciences, Tongji University, Shanghai 200092, China
(\texttt{shan\_haiying@tongji.edu.cn})},~~
Feifei Wang\thanks{School of Mathematical Sciences, Tongji University, Shanghai 200092, China
(\texttt{1710854@tongji.edu.cn})},
Changxiang He\thanks{College of Science, University of Shanghai for Science
and Technology, Shanghai,
P. R. China
(\texttt{changxiang-he@163.com})}~,~~
}
\begin{document}
\maketitle

\begin{abstract}
			
	In this paper,  we characterize the extremal digraphs with  the maximal or minimal $\alpha$-spectral radius among some digraph classes such as rose digraphs, generalized theta digraphs and tri-ring digraphs with given size $m$. These digraph classes are denoted by $\mathcal{R}_{m}^k$,  $\widetilde{\boldsymbol{\Theta}}_k(m)$ and $\INF(m)$  respectively. The main results about spectral extremal digraph by Guo and Liu in \cite{MR2954483} and Li and Wang in \cite{MR3777498} are generalized to $\alpha$-spectral graph theory.  As a by-product of our main results, an open problem in \cite{MR3777498} is answered.			
	Furthermore,  we determine the digraphs with  the first three minimal $\alpha$-spectral radius among all strongly connected digraphs. Meanwhile,  we determine the unique digraph with  the fourth minimal $\alpha$-spectral radius among all strongly connected digraphs for $0\le \alpha \le \frac{1}{2}$.

    \bigskip

    \noindent
    {\bf Keywords}: 
    Digraph; Rose digraphs; Tri-ring digraphs

    \noindent   
    {\bf AMS subject classification 2010}:  05C20, 05C50,  15A18 \\  	
\end{abstract}

	\section{Introduction}
	Let $G$ be a digraph of order $n$ with vertex set $V(G)=\{v_1, \dots, v_n\}$ and arc set $E(G)\subset V(G)\times V(G)$. Throughout this paper,  we assume that $G$ is finite and simple,  i.e.,  without loops and multiple arcs. 
	For a digraph $D$,  if two vertices are connected by an arc,  then they are called adjacent. If there is an arc from $v_i$ to $v_j$,  we indicate this by writing $(v_i, v_j)$,  call $v_j$ the head of $(v_i, v_j)$,  and $v_i$ the tail of $(v_i, v_j)$,  respectively. The digraph $G$ is strongly connected if for every pair of vertices $v_i, v_j\in V(G)$,  there exists a directed path from $v_i$ to $v_j$ and a directed path from $v_j$ to $v_i$. For any vertex $v_i$,  let $N_i^+(G)=\{v_j|(v_i, v_j)\in E(G)\}$ and $N_i^-(G)=\{v_j|(v_j, v_i)\in E(G)\}$ denote the out-neighbors and in-neighbors of $v_i$,  respectively. Let $d_i^+(G)=|N_i^+(G)|$
	denote the outdegree of the vertex $v_i$,  and $d_i^-(G)=|N_i^-(G)|$ denote the indegree of the vertex $v_i$ in the digraph $G$.  Sometimes we simply write $d_v^{+}(G)$ and  $d_v^{-}(G)$ as $d_v^{+}$  and $d_v^{-}$, respectively, if there exists no confusion.

	Spectral graph theory studies connections between combinatorial properties of graphs and the eigenvalues of matrices associated to  graphs,  such as the adjacency matrix, the signless Laplacian matrix etc. In 2017,  Nikiforov \cite{MR3648656} first introduced the concept of the $ A_\alpha$ matrix of an ordinary graph. 
	
	The study of $A_\alpha$ matrix has started attracting attention of researchers in recent years(see \cite{MR3786248, MR3771882, MR3876180, MR3842581,  guo2018alpha1}). In \cite{MR3872980}, Liu et al.  introduced the concept of the $A_\alpha$ matrix for digraph as follows:  for $0 \leq \alpha <1$, 
	$$A_{\alpha}(G)=\alpha D(G)+(1-\alpha)A(G),$$ 
	where $A(G)$ is the adjacency matrix of $G$ and $D(G)=\diag{d_1^+, d_2^+, \dots, d_n^+}$ is the outdegree diagonal matrix of $G$. For square matrix $A$, let $\phi(A,x)=\det (xI-A)$ denote the characteristic polynomial of $A$ and $\rho(A)$ be the spectral radius of $A$.  We write $\phi_{\alpha}(G)=\phi(A_{\alpha}(G), x)$ and $\rho_\alpha(G)=\rho(A_{\alpha}(G))$.  For other standard notations and terminologies of digraphs and spectral graph theory, readers are referred to  \cite{jorgenbangjensenDTA2009} and \cite{brouwerSpectraGraphs2012}.

	In \cite{MR3872980},  Liu et al. characterized the extremal digraph which attains the maximal $\alpha$-spectral radius among all strongly connected digraphs with given dichromatic number and characterized the extremal digraphs with  the maximal (resp. minimal) $\alpha$-spectral radius among all strongly connected bicyclic digraphs. In \cite{Xi2018Merging},  Xi and So determined the digraphs which attains the maximal (or minimal) $\alpha$-spectral radius among all strongly connected digraphs with given parameters such as girth,  clique number,  vertex connectivity or arc connectivity. In \cite{MR3230435},  Hong and You determined the unique digraph with  the minimal (or maximal),  the second minimal (or maximal),  the third minimal,  the fourth minimal adjacency spectral radius and signless Laplacian spectral radius among all strongly connected digraphs. 
	In \cite{MR2954483},  Guo and Liu characterized the extremal digraphs with  the maximal and minimal adjacency spectral radius among two kinds of generalized $\infty$ and $\Theta$-digraphs. In \cite{MR3777498},  Li and Wang obtained some results about the signless Laplacian spectral radius of generalized $\infty$ and $\Theta$-digraphs. The digraph with  the maximum signless Laplacian spectral radius is determined among all $\widetilde{\infty}_2(p, q, s)$-digraphs of order $n$ with $q\le p\le s$. The following problem was proposed in \cite{MR2954483}:
	
	\begin{problem}\label{prob1}
		Among all $\widetilde{\infty}_2(p, q, s)$-digraphs of order $n(=p+q+s+2)$ with $1\le p\le s$, which digraph achieves the maximal (or minimal) signless Laplacian spectral radius?
	\end{problem}
	
	In this paper, motivated by the above results, we will  solve the above question
	for $\alpha$-spectral radius and determine the extremal digraphs with maximal or minimal  $\alpha$-spectral radius among some classes of digraphs such as $\mathcal{R}_{m}^k$, $\widetilde{\boldsymbol{\Theta}}_k(m)$, $\displaystyle \INF(m)$ and so on. These notations of classes of digraphs are introduced in Section 3 and Section 4 of this paper and those digraphs are considered as extensions of definitions of  $\infty$ and $\Theta$-digraphs and the digraph $\widetilde{\infty}_2(p, q, s)$ is denoted by $\displaystyle\Inf(p+1, q+2, s+1)$. As a by-product of our main results (Theorem \ref{thmd4}), Problem \ref{prob1} is answered.

	The remaining of this paper is organized as follows. In Section 2,  some preliminary results which will be needed to
	prove our main results are presented. 
	In Section $3$,  we characterize the extremal digraphs which attain the maximal and minimal $\alpha$-spectral radius among $\mathcal{R}_{m}^k$. We also study the extremal $\alpha$-spectral radius question for digraphs among $\displaystyle \INF(m)$. 
	In Section $4$,   we characterize the extremal digraphs which attain the maximal and minimal $\alpha$-spectral radius among $\widetilde{\boldsymbol{\Theta}}_k^1(m)$. We also obtain the maximal $\alpha$-spectral radius among $\prescript{s}{t}{\widetilde{\boldsymbol\Theta}(m)}$. 
	Furthermore,  we determine the extremal digraphs which attain the maximal $\alpha$-spectral radius among $\mathcal{R}_{m}^k$ and $\widetilde{\boldsymbol{\Theta}}_k(m)$ and the minimal $\alpha$-spectral radius among $\mathcal{R}_{m}^k$ and $\widetilde{\boldsymbol{\Theta}}_k^1(m)$. In Section 5,  we determine the digraphs with  the first three minimal $\alpha$-spectral radius among all strongly connected digraphs. Furthermore,  we determine the unique digraph with  the fourth minimal $\alpha$-spectral radius among all strongly connected digraphs for $0\le \alpha \le \frac{1}{2}$.

	\section{Preliminary results}  
	
	In this section, we present some preliminary results which will be
	needed to prove our main results.

	In the following lemma, we will list some results on nonnegative matrices and digraphs, which  follow directly from
	Perron-Frobenius theorem (see \cite{bermanNMM2014}).
	\begin{lem}\label{lemc2} Let $G$ is a strongly connected digraph. Some basic results on the spectrum are the following:
		\begin{enumerate}[(i).]
			\item  $\rho_\alpha(G)$ is the maximum eigenvalue of $A_\alpha(G)$, with a unique normalized positive eigenvector $\boldsymbol{x}$, which is called the $\alpha$-Perron vector of $G$.
			\item   Let $A$ and $B$ be two nonnegative matrices with $A \geq B$ and $A \neq B$. If  there exists a principle submatrix $M$ of $A$ such that $B \leq M$, then $\rho(B)\leq \rho(A)$ and the inequality is strict when $A$ irreducible.
			\item $\delta^+(G)\le \rho_{\alpha}(G)\le \Delta^+(G)$ and  either one equality holds if and only if $G$ has a constant outdegree vector.
			\item If $\alpha \boldsymbol{x}\le A_\alpha(G)\boldsymbol{x}\le \beta \boldsymbol{x}$, then   $\alpha \le\rho_\alpha(G) \le \beta$.
		\end{enumerate}
	\end{lem}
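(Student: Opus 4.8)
The plan is to derive all four parts from the Perron--Frobenius theory of nonnegative matrices, exactly as the lemma advertises. The common starting point is the observation that, since $G$ is strongly connected, its adjacency matrix $A(G)$ is irreducible, and because $0\le\alpha<1$ the matrix $A_\alpha(G)=\alpha D(G)+(1-\alpha)A(G)$ has the same off-diagonal zero pattern as $A(G)$; hence $A_\alpha(G)$ is a nonnegative irreducible matrix. Part (i) is then nothing more than the statement of the classical Perron--Frobenius theorem for such matrices: the spectral radius is attained as an eigenvalue, it possesses an essentially unique positive eigenvector, and normalizing it gives the $\alpha$-Perron vector $\boldsymbol{x}$.

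For part (ii) I would invoke the standard monotonicity of the Perron root: if $0\le C\le D$ then $\rho(C)\le\rho(D)$, with strict inequality whenever $D$ is irreducible and $C\ne D$. Given the principal submatrix $M$ of $A$ with $B\le M$, I pad $B$ with zero rows and columns to a matrix $\widetilde{B}$ of the same order as $A$, supported on the index set of $M$; since the spectrum of $\widetilde{B}$ is that of $B$ together with zeros and $B\ge 0$, we have $\rho(\widetilde{B})=\rho(B)$. By construction $\widetilde{B}\le A$ and $\widetilde{B}\ne A$ (if $M$ is a proper submatrix then $\widetilde{B}$ has a zero row while an irreducible $A$ of order $\ge 2$ has none; if $M=A$ this is the hypothesis $B\ne A$), so the monotonicity statement gives $\rho(B)\le\rho(A)$, strictly when $A$ is irreducible.

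For part (iii) the key elementary computation is that every row sum of $A_\alpha(G)$ equals the corresponding outdegree: the $i$-th row of $\alpha D(G)$ sums to $\alpha d_i^+$ and the $i$-th row of $(1-\alpha)A(G)$ sums to $(1-\alpha)d_i^+$, so the $i$-th row sum of $A_\alpha(G)$ is $d_i^+$. Since the spectral radius of any nonnegative matrix lies between its minimum and maximum row sums, this yields $\delta^+(G)\le\rho_\alpha(G)\le\Delta^+(G)$; and as $A_\alpha(G)$ is irreducible, either equality forces all row sums to coincide, i.e.\ $G$ has constant outdegree $d$, while conversely a constant outdegree makes the all-ones vector an eigenvector with eigenvalue $d=\rho_\alpha(G)$.

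Finally, for part (iv) I would use the left $\alpha$-Perron vector. Since $A_\alpha(G)^{\mathsf T}$ is also nonnegative and irreducible, there is a positive row vector $\boldsymbol{y}^{\mathsf T}$ with $\boldsymbol{y}^{\mathsf T}A_\alpha(G)=\rho_\alpha(G)\boldsymbol{y}^{\mathsf T}$. Multiplying the hypothesis $\alpha\boldsymbol{x}\le A_\alpha(G)\boldsymbol{x}\le\beta\boldsymbol{x}$ on the left by $\boldsymbol{y}^{\mathsf T}$ and using $\boldsymbol{y}^{\mathsf T}\bigl(A_\alpha(G)\boldsymbol{x}\bigr)=\rho_\alpha(G)\,\boldsymbol{y}^{\mathsf T}\boldsymbol{x}$ together with $\boldsymbol{y}^{\mathsf T}\boldsymbol{x}>0$ gives $\alpha\le\rho_\alpha(G)\le\beta$. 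None of these steps is hard; the only places needing a little care are the strictness in (ii) and the equality characterization in (iii), where one must appeal to the sharper irreducible form of the Perron--Frobenius theorem rather than to bare monotonicity of the spectral radius.
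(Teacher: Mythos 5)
Your proposal is correct and follows exactly the route the paper intends: the paper states this lemma without proof, simply noting that all four parts ``follow directly from the Perron--Frobenius theorem'' (citing Berman--Plemmons), and your derivations---irreducibility of $A_\alpha(G)$ from strong connectivity, zero-padding $B$ for the monotonicity in (ii), the row-sum computation for (iii), and the left Perron vector for (iv)---are the standard arguments being invoked. The only point worth flagging is that in (iv) the vector $\boldsymbol{x}$ should be read as an arbitrary positive vector (not necessarily the Perron vector of $A_\alpha(G)$, which is how the lemma is later applied in Theorem \ref{thm3}), and your left-eigenvector argument covers that case without change.
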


	\begin{lem} \label{lemz2}  
		Let $A$ and $B$ be two nonnegative matrices of order $n$ with $A \geq B$ and $A \neq B$. Then the following holds:
		\begin{center}
			$\phi(B,x) \geq \phi(A,x)$ for $x \geq \rho(A)$,
		\end{center}
		especially, when $A$ is irreducible, the inequality is strict.
	\end{lem}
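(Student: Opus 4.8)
The plan is to move from $A$ to $B$ along the straight line segment of matrices and control the sign of the derivative of the determinant. Put $C=A-B$, so $C\ge 0$ and $C\ne 0$, and for $t\in[0,1]$ set
$$f(t)=\phi(A-tC,\,x)=\det\bigl(xI-A+tC\bigr),$$
so that $f(0)=\phi(A,x)$ and $f(1)=\phi(B,x)$. It suffices to show that $f$ is nondecreasing on $[0,1]$ for every fixed $x>\rho(A)$, and then let $x\downarrow\rho(A)$ to recover the boundary case by continuity of $x\mapsto\phi(\cdot,x)$.

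For the monotonicity, note first that $A-tC=(1-t)A+tB$ is a nonnegative matrix with $A-tC\le A$, so by the Perron--Frobenius monotonicity of the spectral radius (the case of Lemma \ref{lemc2}(ii) in which the principal submatrix is the whole matrix) we have $\rho(A-tC)\le\rho(A)<x$. Hence $xI-(A-tC)$ is a nonsingular $M$-matrix, so its inverse is entrywise nonnegative and $\det\bigl(xI-(A-tC)\bigr)=f(t)>0$ \cite{bermanNMM2014}; consequently
$$\operatorname{adj}\bigl(xI-A+tC\bigr)=f(t)\,\bigl(xI-(A-tC)\bigr)^{-1}\ \ge\ 0 .$$
By Jacobi's formula, $f'(t)=\operatorname{tr}\!\bigl(\operatorname{adj}(xI-A+tC)\,C\bigr)$, which is a sum of products of nonnegative numbers; thus $f'(t)\ge 0$ for all $t\in[0,1]$, and $f(1)\ge f(0)$. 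This proves $\phi(B,x)\ge\phi(A,x)$ for $x\ge\rho(A)$.

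For the strict statement, assume in addition that $A$ is irreducible. Then for $x>\rho(A)$ the digraph of $A$ is strongly connected, so $(xI-A)^{-1}=x^{-1}\sum_{k\ge 0}(A/x)^k$ has all entries strictly positive; hence $\operatorname{adj}(xI-A)>0$ entrywise, and since $C\ge 0$, $C\ne 0$, we get $f'(0)=\operatorname{tr}\bigl(\operatorname{adj}(xI-A)\,C\bigr)>0$. Combined with $f'\ge 0$ on all of $[0,1]$ and the continuity of $f'$, this forces $f(1)>f(0)$, i.e. $\phi(B,x)>\phi(A,x)$ for $x>\rho(A)$. At $x=\rho(A)$, the Perron--Frobenius theorem gives $\rho(B)<\rho(A)$ (as $0\le B\le A$, $B\ne A$, $A$ irreducible), whence $\phi(B,\rho(A))>0=\phi(A,\rho(A))$, and the strict inequality holds there as well.

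The only genuine obstacle is ensuring that $\operatorname{adj}(xI-A+tC)$ stays entrywise nonnegative for the \emph{entire} segment $t\in[0,1]$, and it is precisely here that the hypothesis ``$x\ge\rho(A)$'' (rather than the weaker ``$x\ge\rho(B)$'') is needed, through the bound $\rho(A-tC)\le\rho(A)$; for the strict part one additionally has to exhibit a single $t$ with $f'(t)>0$, which $t=0$ supplies. An alternative, avoiding Jacobi's formula, is to telescope through a chain $B=A_0\le A_1\le\cdots\le A_m=A$ in which consecutive matrices differ in one entry $(i,j)$ and use the multilinearity identity $\phi(A_{k-1},x)-\phi(A_k,x)=\bigl((A_k)_{ij}-(A_{k-1})_{ij}\bigr)\operatorname{adj}(xI-A_{k-1})_{ji}$ together with the same $M$-matrix positivity; strictness then follows by taking $k=m$ and observing that a shortest $j\to i$ path in the (strongly connected) digraph of $A$ avoids the arc $(i,j)$ and hence survives in $A_{m-1}$, keeping $\operatorname{adj}(xI-A_{m-1})_{ji}$ positive.
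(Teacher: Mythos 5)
Your proof is correct, but it takes a genuinely different route from the paper's. The paper fixes the pair $(A,B)$ and proves the inequality by induction on the order $n$, using the identity $\phi'(A,x)=\sum_{i}\phi(A_i,x)$ over the $(n-1)\times(n-1)$ principal submatrices to show that $x\mapsto\phi(B,x)-\phi(A,x)$ is nondecreasing on $[\rho(A),+\infty)$, and then evaluates at the left endpoint, where $\phi(A,\rho(A))=0$ and $\phi(B,\rho(A))\ge 0$. You instead fix $x$ and differentiate in the interpolation parameter $t$ along the segment from $A$ to $B$, invoking Jacobi's formula and the entrywise nonnegativity of $\bigl(xI-(A-tC)\bigr)^{-1}$ (hence of the adjugate) for $x>\rho(A)\ge\rho(A-tC)$; strictness comes from $\operatorname{adj}(xI-A)>0$ when $A$ is irreducible, and the boundary case $x=\rho(A)$ is handled by continuity, respectively by $\rho(B)<\rho(A)$. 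What your approach buys is a non-inductive, one-line identification of where the hypothesis $x\ge\rho(A)$ enters (keeping the whole segment inside the M-matrix region) and a cleaner localization of strictness at $t=0$; what the paper's approach buys is that it needs nothing beyond Perron--Frobenius monotonicity and the elementary derivative formula for $\phi$, avoiding M-matrix inverse-positivity and Jacobi's formula. Both arguments lean on \cite{bermanNMM2014}-type facts and on the monotonicity statement recorded in Lemma \ref{lemc2}(ii), so your proof is fully compatible with the paper's toolkit.
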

	\begin{proof}The proof is by induction on $n$, the order of $A$.
		The case $n = 1$ is obvious.
		Assume that the result holds when $n \leq k$.  Now consider the case for $n=k+1$.
		It is well known that the derivative of characteristic polynomial of $A$ is the sum of characteristic
		polynomials of the $n$ principal submatrices of $A$ of size $n-1$, which we denote by $A_1,\cdots, A_n$, that is,
		$$\phi'(A,x)=\sum_{i=1}^{n}\phi(A_i,x).$$
		Hence,
		\begin{equation} \label{chp:def}
		\phi'(B,x)-\phi'(A,x)=\sum_{i=1}^{n}(\phi(B_i,x)-\phi(A_i,x)).
		\end{equation}
		Since $A \geq B, A\neq B$, $A_i \geq B_i$ for  $i=1,\dots,n$. From (ii) of Lemma \ref{lemc2},    we have $\rho(A) \geq \rho(B)$ and $\rho(A)>\rho(A_i) \geq \rho(B_i)$.  By the inductive hypothesis,
		\begin{center}
			$\phi(B_i,x) \geq \phi(A_i,x)$ hold when $x \geq  \rho(A) \geq \rho(A_i)$.
		\end{center}
		Thus, by Formula \eqref{chp:def}, we have
		\begin{center}
			$\phi'(B,x)-\phi'(A,x)\geq 0$ for $x \geq  \rho(A)$,
		\end{center}
		which implies that the function $g(x)=:\phi(B,x)-\phi(A,x)$ is monotonically increasing on the interval $[\rho(A), +\infty)$.
		
		So, $g(x) \geq g(\rho(A))=\phi(B,\rho(A))-\phi(A,\rho(A)) \geq 0$ for $x \geq \rho(A)$,
		when $A$ is irreducible, $\rho(A)>\rho(B)$ and $g(\rho(A))>0$, which implies that
		$\phi(B,x) > \phi(A,x)$ for $x \geq \rho(A)$.  
		
		Therefore, the result holds when $n=k+1$.  By induction principle,
		the desired result follows.
	\end{proof}

	Let $U$ be a vertex subset of digraph $G$ and let $A_\alpha(G)(U)$ be the principal submatrix of $A_\alpha(G)$ obtained by deleting the rows and columns corresponding to vertices of $U$.  We write $\psi_{\alpha}(G,U)=\phi(A_\alpha(G)(U),x)$.   When $U=\{v\}$,  we rewrite  $A_\alpha(G)(U)$ as $(A_\alpha)_v(G)$ and $\psi_{\alpha}(G,v)=\phi(A_\alpha(G)(U),x)$.
	
	Suppose that $G_1,  G_2, \dots, G_k$ are $k$ disjoint connected digraphs and $v_i \in V(G_i)$ for $i=1, 2, \cdots, k$.
	The coalescence of $G_1,  G_2, \dots, G_k$ with respect to $v_1, v_2,\dots,v_k$, denoted by $\bigodot\limits_{i=1}^kG_i(v_i)$,   is the digraph  obtained  from $G_1,  G_2, \dots$, $G_k$ by identifying vertices $v_1, v_2, \ldots, v_k$ as new vertex $v$. If $k=2$, the  coalescence of $G_1$ and $G_2$ with respect to $v_1, v_2$ also is written as $G_1v_1:v_2G_2$ (or $G_1\cdot G_2$ for short).

	\begin{thm}\label{thm:b1}
		Let $H=\bigodot\limits_{i=1}^kG_i(v_i)$.  Then we have
		\begin{align}
		\phi_{\alpha}(H)=\prod_{i=1}^k\psi_\alpha(G_i,v_i)\left((1-k)x+\sum_{i=1}^k\frac{\phi_{\alpha}(G_i)}{\psi_\alpha(G_i,v_i)}\right).
		\end{align}
	\end{thm}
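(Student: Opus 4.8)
The plan is to expand $\phi_{\alpha}(H)=\det\bigl(xI-A_{\alpha}(H)\bigr)$ directly from the block ("arrowhead") structure of $A_{\alpha}(H)$ by a Schur‑complement / cofactor expansion about the coalesced vertex, and then to recognise the Schur complements that appear as the characteristic polynomials $\phi_{\alpha}(G_i)$ and $\psi_{\alpha}(G_i,v_i)$.

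\textbf{Setting up the matrix.} First I would record the one structural fact that makes the whole computation work. Let $v$ be the vertex of $H$ obtained by identifying $v_1,\dots,v_k$. Since the $G_i$ are pairwise vertex‑disjoint apart from $v$ and contain no loops, the out‑neighbourhoods $N^{+}_{v_i}(G_i)$ are pairwise disjoint subsets of $V(H)\setminus\{v\}$, so
\[
d_v^{+}(H)=\sum_{i=1}^{k} d^{+}_{v_i}(G_i),
\]
while the out‑degree of every other vertex of $H$ is unchanged from the $G_i$ it came from. Hence, ordering the vertices of $H$ as $v$, then $V(G_1)\setminus\{v_1\}$, $\dots$, then $V(G_k)\setminus\{v_k\}$, and writing each
\[
A_{\alpha}(G_i)=\begin{pmatrix} a_i & \boldsymbol{b}_i^{\top}\\[2pt] \boldsymbol{c}_i & C_i\end{pmatrix}
\]
with $v_i$ as the first index (so $a_i=\alpha d^{+}_{v_i}(G_i)$ and $C_i=(A_{\alpha})_{v_i}(G_i)$), the matrix $A_{\alpha}(H)$ becomes the block matrix with $(1,1)$‑entry $\sum_i a_i$, first block‑row $(\boldsymbol{b}_1^{\top},\dots,\boldsymbol{b}_k^{\top})$, first block‑column $(\boldsymbol{c}_1^{\top},\dots,\boldsymbol{c}_k^{\top})^{\top}$, and diagonal blocks $C_1,\dots,C_k$, with all other blocks zero. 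In particular the diagonal entry at $v$ splits additively as $\sum_i a_i$, and $(A_{\alpha})_{v}(H)=\bigoplus_{i=1}^{k} C_i$, so $\psi_{\alpha}(H,v)=\prod_{i=1}^{k}\psi_{\alpha}(G_i,v_i)$.

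\textbf{The Schur‑complement computation.} Working over the field of rational functions in $x$ (equivalently, for all $x$ outside the finitely many roots of the $\psi_{\alpha}(G_i,v_i)$, so each $xI-C_i$ is invertible), I would apply the Schur‑complement determinant identity to $xI-A_{\alpha}(H)$ with pivot block $\bigoplus_i(xI-C_i)$, obtaining
\[
\phi_{\alpha}(H)=\Bigl(\prod_{i=1}^{k}\det(xI-C_i)\Bigr)\Bigl(x-\sum_{i=1}^{k}a_i-\sum_{i=1}^{k}\boldsymbol{b}_i^{\top}(xI-C_i)^{-1}\boldsymbol{c}_i\Bigr).
\]
Applying the same identity to each $xI-A_{\alpha}(G_i)$ gives $\phi_{\alpha}(G_i)=\psi_{\alpha}(G_i,v_i)\bigl((x-a_i)-\boldsymbol{b}_i^{\top}(xI-C_i)^{-1}\boldsymbol{c}_i\bigr)$, that is,
\[
\boldsymbol{b}_i^{\top}(xI-C_i)^{-1}\boldsymbol{c}_i=(x-a_i)-\frac{\phi_{\alpha}(G_i)}{\psi_{\alpha}(G_i,v_i)}.
\]
Substituting this into the previous display and simplifying the scalar factor, the terms $\sum_i a_i$ cancel and $x-\sum_i(x-a_i)=x-kx+\sum_i a_i$, so the scalar factor collapses to $(1-k)x+\sum_i \phi_{\alpha}(G_i)/\psi_{\alpha}(G_i,v_i)$; together with $\prod_i\det(xI-C_i)=\prod_i\psi_{\alpha}(G_i,v_i)$ this is exactly the asserted formula. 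Finally, multiplying out shows that $\prod_i\psi_{\alpha}(G_i,v_i)\cdot\bigl(\phi_{\alpha}(G_i)/\psi_{\alpha}(G_i,v_i)\bigr)=\phi_{\alpha}(G_i)\prod_{j\neq i}\psi_{\alpha}(G_j,v_j)$ is a polynomial, so both sides of the identity are polynomials in $x$; an identity holding for all but finitely many $x$ is therefore a polynomial identity, and the proof is complete.

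\textbf{Expected difficulty.} I do not anticipate a genuine obstacle; the only points needing care are (a) the additive splitting $d_v^{+}(H)=\sum_i d_{v_i}^{+}(G_i)$ of the out‑degree at the coalesced vertex — this is the one place the digraph structure is used, as opposed to arbitrary matrices — and (b) making the Schur‑complement manipulation rigorous in spite of the possible singularity of $xI-C_i$, which the polynomial‑identity remark above takes care of. An essentially equivalent alternative would be induction on $k$ via $\bigodot_{i=1}^{k}G_i(v_i)=\bigl(\bigodot_{i=1}^{k-1}G_i(v_i)\bigr)\cdot G_k$, using the case $k=2$ (a digraph $A_{\alpha}$‑analogue of Schwenk's coalescence formula) together with $\psi_{\alpha}\bigl(\bigodot_{i=1}^{k-1}G_i(v_i),v\bigr)=\prod_{i=1}^{k-1}\psi_{\alpha}(G_i,v_i)$; I would note this but carry out the direct determinant computation.
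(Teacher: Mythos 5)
Your proof is correct and follows essentially the same route as the paper: both set up the same arrowhead block form of $A_{\alpha}(H)$ (using the additive splitting of the out-degree at the coalesced vertex) and evaluate the determinant directly, the paper via Laplace expansion and you via the interchangeable Schur-complement identity. Your explicit treatment of the invertibility of $xI-C_i$ via the polynomial-identity remark is a welcome extra precaution that the paper omits.
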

	
	\begin{proof}
		For short, set $A_i=(A_\alpha)_{v_i}(G_i)$ for $i=1, 2, \dots, k$.  Then $\phi(A_i)=\psi_\alpha(G_i,v_i)=\psi_\alpha(G_i,v)$.
		Without loss of generality,  we may assume the $\alpha$-matrix of $H$ is in the following form:
		\begin{align*}
		A_{\alpha}(H)=\begin{pmatrix}
		\alpha d       & \beta_1 & \beta_2 & \cdots & \beta_{k-1} & \beta_k \\
		\gamma_1      & A_1     & 0\,     & \cdots & 0           & 0       \\
		\gamma_2      & 0       & A_2     & \cdots & 0\,         & 0       \\
		\vdots         & \vdots  & \vdots  & \cdots & \vdots      & \vdots  \\
		\gamma_{k-1} & 0       & 0       & \cdots & A_{k- 1}    & 0       \\
		\gamma_{k}   & 0       & 0       & \cdots & 0           & A_{k}
		\end{pmatrix},
		\end{align*}
		where the first row of $A_\alpha(H)$ is corresponding to vertex $v$ of $H$, 
		$d=\sum\limits_{i=1}^kd_{v_i}^+$  and $\beta_i, \gamma_i$ $(i=1, 2, \dots, k)$ are vectors 
		of suitable dimensions.
		
		By the Laplace expansion formula for the determinant,  we have
		\begin{align*}
		\phi_{\alpha}(H) & =\prod_{i=1}^k\phi(A_{i})\left [\sum_{i=1}^k\frac{\phi_{\alpha}(G_{i})}{\phi(A_{i})}-(k-1)x\right ]\\
		&=\prod_{i=1}^k\psi_\alpha(G_i,v_i)((1-k)x+\sum_{i=1}^k\frac{\phi_{\alpha}(G_i)}{\psi_\alpha(G_i,v_i)}).  \qedhere
		\end{align*}
	\end{proof}

	The following corollaries can be derived from Theorem \ref{thm:b1}.
	
	\begin{cor}  \label{cord2}
		Let $G$ and $H$ be two digraphs with $u\in V(G)$ and $v\in V(H)$. Then
		\begin{equation}\label{coal:alpha}
		\phi_{\alpha}(Gu:vH)= \phi_{\alpha}(G)\psi_\alpha(H,v)+\psi_\alpha(G,u)\phi_{\alpha}(H) - x\psi_\alpha(G,u)\psi_\alpha(H,v).
		\end{equation}
	\end{cor}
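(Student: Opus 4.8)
The plan is to obtain this identity as the $k=2$ instance of Theorem \ref{thm:b1}. Taking $G_1=G$, $G_2=H$, $v_1=u$, $v_2=v$, the coalescence $\bigodot_{i=1}^2 G_i(v_i)$ is precisely $Gu:vH$, so Theorem \ref{thm:b1} gives
\begin{equation*}
\phi_{\alpha}(Gu:vH)=\psi_\alpha(G,u)\,\psi_\alpha(H,v)\left((1-2)x+\frac{\phi_{\alpha}(G)}{\psi_\alpha(G,u)}+\frac{\phi_{\alpha}(H)}{\psi_\alpha(H,v)}\right).
\end{equation*}

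Then I would simply distribute the factor $\psi_\alpha(G,u)\psi_\alpha(H,v)$ over the three summands in the bracket. The term $(1-2)x$ yields $-x\,\psi_\alpha(G,u)\psi_\alpha(H,v)$; the term $\phi_{\alpha}(G)/\psi_\alpha(G,u)$ cancels the factor $\psi_\alpha(G,u)$ and leaves $\phi_{\alpha}(G)\psi_\alpha(H,v)$; and the term $\phi_{\alpha}(H)/\psi_\alpha(H,v)$ cancels $\psi_\alpha(H,v)$ and leaves $\psi_\alpha(G,u)\phi_{\alpha}(H)$. Adding the three contributions reproduces exactly the right-hand side of \eqref{coal:alpha}.

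The only point that needs a word of care is that the bracketed expression in Theorem \ref{thm:b1} involves the rational functions $\phi_{\alpha}(G)/\psi_\alpha(G,u)$ and $\phi_{\alpha}(H)/\psi_\alpha(H,v)$, which are undefined at the finitely many roots of $\psi_\alpha(G,u)$ or $\psi_\alpha(H,v)$. However, after clearing these denominators both sides of the asserted identity are polynomials in $x$ that agree for all but finitely many values of $x$, hence agree identically; alternatively one can simply read Theorem \ref{thm:b1} already as the polynomial identity obtained after this clearing. So there is no genuine obstacle: the entire content sits in Theorem \ref{thm:b1}, and this corollary is a one-line specialization together with routine bookkeeping. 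If a self-contained argument were preferred, the same identity also follows directly by applying the Laplace (Schur) block expansion to $A_\alpha(Gu:vH)$ with the coalescence vertex placed in the first row and column — exactly the computation in the proof of Theorem \ref{thm:b1}, specialized to two blocks $A_1=(A_\alpha)_u(G)$ and $A_2=(A_\alpha)_v(H)$.
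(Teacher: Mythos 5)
Your proof is correct and matches the paper's route exactly: the paper also obtains this corollary as the $k=2$ specialization of Theorem \ref{thm:b1}, and your expansion of the bracketed expression (plus the remark on clearing denominators) is just the routine bookkeeping the paper leaves implicit.
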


	Let $G_v(n_1, n_2, \dots, n_k)$ be the digraph obtained from strongly connected digraph $G$ by attaching $k(k\ge 2)$ directed cycles of length $n_i$ $(i=1, \dots, k)$ at $v$,  respectively. In the sequel,  unless otherwise
	stated, we will denote $d=\frac{1-\alpha}{x-\alpha}$.
	\begin{cor}\label{cor:b1}
		Let $H=G_v(n_1, n_2, \dots, n_k)$ and $n=\sum\limits_{i=1}^k(n_i-1)$,  $d=\frac{1-\alpha}{x-\alpha}$  and  $A'=(A_{\alpha})_{v}(G)$,  then
		\begin{align*}
		\phi_{\alpha}(H)=(x-\alpha)^n\left(\phi_{\alpha}(G)-\phi(A')(k\alpha+(1-\alpha)\sum_{i=1}^kd^{n_i - 1})\right).
		\end{align*}
	\end{cor}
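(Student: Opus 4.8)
The plan is to realize $H=G_v(n_1,\dots,n_k)$ as a coalescence of $k+1$ digraphs at the single vertex $v$, namely $G$ together with the $k$ directed cycles $C_{n_1},\dots,C_{n_k}$ (here $C_{n_i}$ denotes the directed cycle of length $n_i$ attached at $v$), and then apply Theorem \ref{thm:b1} with its parameter ``$k$'' replaced by ``$k+1$''.

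First I would record the three ingredients the formula needs. For the piece $G$ itself, $\psi_\alpha(G,v)=\phi(A')$ holds by the definition $A'=(A_\alpha)_v(G)$, and its full characteristic polynomial is $\phi_\alpha(G)$. For each directed cycle $C_{n_i}$, regarded as a standalone digraph in which every vertex (in particular $v$) has outdegree $1$, deleting $v$ leaves a directed path on $n_i-1$ vertices, each still of outdegree $1$; hence $(A_\alpha)_{v}(C_{n_i})$ is triangular with all diagonal entries equal to $\alpha$ and nilpotent off-diagonal part, so $\psi_\alpha(C_{n_i},v)=(x-\alpha)^{n_i-1}$. Moreover $A_\alpha(C_{n_i})=\alpha I+(1-\alpha)P$ with $P$ the cyclic permutation matrix of order $n_i$, whence $\phi_\alpha(C_{n_i})=(x-\alpha)^{n_i}-(1-\alpha)^{n_i}$. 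I would also note that the out-arcs at $v$ in $H$ partition among the $k+1$ pieces — the $k$ cycles each contribute one out-arc at $v$ and $G$ contributes $d_v^+(G)$, so $d_v^+(H)=d_v^+(G)+k$ splits correctly — which is what makes the Laplace-expansion bookkeeping behind Theorem \ref{thm:b1} applicable.

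Next I would substitute into the coalescence formula. Using $\prod_{i=1}^k\psi_\alpha(C_{n_i},v)=(x-\alpha)^{\sum_{i=1}^k(n_i-1)}=(x-\alpha)^n$ together with
$$\frac{\phi_\alpha(C_{n_i})}{\psi_\alpha(C_{n_i},v)}=\frac{(x-\alpha)^{n_i}-(1-\alpha)^{n_i}}{(x-\alpha)^{n_i-1}}=(x-\alpha)-(1-\alpha)d^{\,n_i-1},$$
Theorem \ref{thm:b1} gives
$$\phi_{\alpha}(H)=\phi(A')(x-\alpha)^n\Bigl(-kx+\frac{\phi_{\alpha}(G)}{\phi(A')}+\sum_{i=1}^k\bigl((x-\alpha)-(1-\alpha)d^{\,n_i-1}\bigr)\Bigr).$$
Since $-kx+\sum_{i=1}^k(x-\alpha)=-k\alpha$, the parenthesis collapses to $\frac{\phi_{\alpha}(G)}{\phi(A')}-k\alpha-(1-\alpha)\sum_{i=1}^k d^{\,n_i-1}$, and multiplying through by $\phi(A')(x-\alpha)^n$ yields exactly the asserted identity.

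The argument is essentially routine once the decomposition is in place; the only steps that require genuine care are the correct evaluations of $\psi_\alpha(C_{n_i},v)$ and $\phi_\alpha(C_{n_i})$ — in particular the observation that every vertex of a directed cycle has outdegree $1$, so its $\alpha$-matrix has constant diagonal $\alpha$ — and keeping the index shift from $k$ to $k+1$ pieces consistent throughout the substitution.
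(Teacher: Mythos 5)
Your proposal is correct and follows essentially the same route as the paper: both realize $H$ as the coalescence of $G$ with the $k$ directed cycles at $v$, plug $\phi_\alpha(\overrightarrow{C}_{n_i})=(x-\alpha)^{n_i}-(1-\alpha)^{n_i}$ and $\psi_\alpha(\overrightarrow{C}_{n_i},v)=(x-\alpha)^{n_i-1}$ into Theorem \ref{thm:b1}, and simplify via $-kx+k(x-\alpha)=-k\alpha$. Your extra justification of the cycle computations (triangular structure after deleting $v$, and $A_\alpha(\overrightarrow{C}_{n_i})=\alpha I+(1-\alpha)P$) is detail the paper leaves as ``direct calculation.''
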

	
	\begin{proof} Let $F_i(x)=\phi(A_{\alpha}(\overrightarrow{C}_{n_i})), f_i(x)=\psi_\alpha(\overrightarrow{C}_{n_i},v)$ for $i=1, 2, \dots, k$.
		By direct calculation, we have
		\begin{center}
		 	$F_i(x)=(x-\alpha)^{n_i } -(1-\alpha)^{n_i } $
			and  $f_i(x)=(x-\alpha)^{n_i - 1}$.
		\end{center}
		From Theorem \ref{thm:b1},  we have
		\begin{align*}
		\phi_{\alpha}(H) & =\phi(A')\prod_{i=1}^{k}f_i(x)
		\left(-kx+\frac{\phi_{\alpha}(G)}{\phi(A')}+ \sum_{i=1}^{k}\frac{F_i(x)}{f_i(x)}\right)                                                  \\
		& =(x-\alpha)^n\phi(A')\left(-kx+k(x-\alpha)-(1-\alpha)\sum_{i=1}^kd^{n_i - 1}+\frac{\phi_{\alpha}(G)}{\phi(A')}\right) \\
		& =(x-\alpha)^n\left(\phi_{\alpha}(G)-\phi(A')(k\alpha+(1-\alpha)\sum_{i=1}^kd^{n_i - 1})\right).  \qedhere
		\end{align*}
	\end{proof}
	
	Let $A= \begin{pmatrix}
	A_1 & \alpha_1\\\beta_1 & a
	\end{pmatrix}, B=\begin{pmatrix}
	b & \beta_2\\\alpha_2 & B_1
	\end{pmatrix}$ and $C=\begin{pmatrix}
	A_1 & \alpha_1 & O\\
	\beta_1 & a+b & \beta_2\\
	O & \alpha_2 & B_1
	\end{pmatrix}$ be square matrices with $a,b \in \mathbb{R}$. By similar argument of proof Theorem \ref{thm:b1}, the following formula holds. 
	\begin{equation}\label{coal:mat}
	\phi(C,x)= \phi(A,x)\phi(B_1,x)+\phi(A_1,x)\phi(B,x) - x\phi(A_1,x)\phi(B_1,x).
	\end{equation}
	
	Let $G_{u, v}(s, t)$ be the digraph obtained from digraph $G$ by attaching two directed cycles of length $s$ and $t$ at $u$ and $v$,  respectively.
	Applying  Formulae \eqref{coal:alpha} and \eqref{coal:mat},  the following result can be deduced.
	\begin{lem}\label{thmd1}
		Let $G$ be a digraph,  vertex $u, v\in V(G)$ and $u\neq v$.
		Let $M=A_{\alpha}(G)$,  $M_1=(A_{\alpha})_{u}(G)$,  $M_2=(A_{\alpha})_{v}(G)$,  $M_3=A_{\alpha}(G)(\{u,v\})$ and $f_i=\phi(M_i, x)$,  $(i=0, 1, 2, 3), \ M_0=M$. We have
		\begin{equation}\label{eq:b1}
		\begin{aligned}
		\phi_{\alpha}(G_{u, v}(s, t)) & = (x-\alpha)^{s+t-2}\big( \alpha^2f_3-\alpha(f_1+f_2)+f_0\big)     \\
		& +(x-\alpha)^{t-1}(1-\alpha)^s(\alpha f_3-f_1)                \\
		& + (x-\alpha)^{s-1}(1-\alpha)^t(\alpha f_3-f_2)+(1-\alpha)^{s+t}f_3.
		\end{aligned}		
		\end{equation}
	\end{lem}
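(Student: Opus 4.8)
The plan is to build $G_{u,v}(s,t)$ by attaching the two directed cycles one at a time and applying the coalescence formula \eqref{coal:alpha} twice, together with the matrix identity \eqref{coal:mat} to handle the submatrix obtained by deleting the coalescence vertex. First I would view $G_{u,v}(s,t)$ as the coalescence $(G_u(s))\,u:v\,\overrightarrow{C}_t$, i.e. take the digraph $G_u(s)$ obtained from $G$ by attaching a directed cycle of length $s$ at $u$, and then attach at $v$ a directed cycle of length $t$. By Corollary \ref{cord2} applied to this second coalescence,
\begin{equation*}
\phi_{\alpha}(G_{u,v}(s,t)) = \phi_{\alpha}(G_u(s))\,\psi_\alpha(\overrightarrow{C}_t,v) + \psi_\alpha(G_u(s),v)\,\phi_{\alpha}(\overrightarrow{C}_t) - x\,\psi_\alpha(G_u(s),v)\,\psi_\alpha(\overrightarrow{C}_t,v).
\end{equation*}
Here $\psi_\alpha(\overrightarrow{C}_t,v)=(x-\alpha)^{t-1}$ and $\phi_{\alpha}(\overrightarrow{C}_t)=(x-\alpha)^t-(1-\alpha)^t$, exactly as recorded in the proof of Corollary \ref{cor:b1}.

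Next I would expand the two quantities $\phi_{\alpha}(G_u(s))$ and $\psi_\alpha(G_u(s),v)$ in terms of the matrices attached to $G$. For $\phi_{\alpha}(G_u(s))$ one applies Corollary \ref{cord2} (or Corollary \ref{cor:b1} with $k=1$) to the coalescence $G\,u:v\,\overrightarrow{C}_s$, giving
\begin{equation*}
\phi_{\alpha}(G_u(s)) = (x-\alpha)^{s-1}\big(f_0 - \alpha f_1\big) - (1-\alpha)^s f_1,
\end{equation*}
where $f_0=\phi(M,x)$ and $f_1=\phi(M_1,x)$ with $M_1=(A_\alpha)_u(G)$. For the subdeterminant $\psi_\alpha(G_u(s),v)$ one deletes the row and column at $v$; this reduces $A_\alpha(G_u(s))$ to a matrix of the form on the left of \eqref{coal:mat} with $A$ the block coming from $G$ with $v$ deleted and the cycle $\overrightarrow{C}_s$ hanging off $u$. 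Writing $f_2=\phi(M_2,x)$, $f_3=\phi(M_3,x)$ for $M_2=(A_\alpha)_v(G)$ and $M_3=A_\alpha(G)(\{u,v\})$, the same one-cycle computation (now based at $u$ inside the digraph $G$ with $v$ removed) yields
\begin{equation*}
\psi_\alpha(G_u(s),v) = (x-\alpha)^{s-1}\big(f_2 - \alpha f_3\big) - (1-\alpha)^s f_3.
\end{equation*}
This is where \eqref{coal:mat} is used: it is precisely the analogue of Corollary \ref{cord2} at the level of principal submatrices, so the bookkeeping with the $a+b$ entry on the diagonal is handled cleanly.

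Finally I would substitute these three expansions into the displayed formula for $\phi_{\alpha}(G_{u,v}(s,t))$ and collect terms according to the powers $(x-\alpha)^{s+t-2}$, $(x-\alpha)^{t-1}(1-\alpha)^s$, $(x-\alpha)^{s-1}(1-\alpha)^t$, and $(1-\alpha)^{s+t}$. The coefficient of $(x-\alpha)^{s+t-2}$ collects as $f_0 - \alpha f_1 - \alpha f_2 + \alpha^2 f_3$; the coefficient of $(x-\alpha)^{t-1}(1-\alpha)^s$ collects as $-f_1 + \alpha f_3$ (the cross terms from $x\,\psi_\alpha\psi_\alpha$ and from $\psi_\alpha(G_u(s),v)\phi_\alpha(\overrightarrow{C}_t)$ combine using $\phi_\alpha(\overrightarrow{C}_t) - x(x-\alpha)^{t-1} = -\alpha(x-\alpha)^{t-1}-(1-\alpha)^t$, which is the identity that makes the $(x-\alpha)$-power terms merge into $(x-\alpha)^{s+t-2}$ and the rest into the $(1-\alpha)^t$ block); symmetrically the coefficient of $(x-\alpha)^{s-1}(1-\alpha)^t$ is $-f_2+\alpha f_3$; and the $(1-\alpha)^{s+t}$ term comes only from the product $\big(-(1-\alpha)^s f_3\big)\cdot\big(-(1-\alpha)^t\big)$ inside the $-x\psi\psi$ and $\psi\phi$ pieces, giving $f_3$. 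Matching these four groups against \eqref{eq:b1} finishes the proof. The main obstacle is purely organizational: keeping the cross terms straight so that each monomial in $(x-\alpha)$ and $(1-\alpha)$ lands in the right one of the four buckets, in particular verifying that the stray $x(x-\alpha)^{s+t-2}$-type contributions cancel against $\alpha$-shifted pieces rather than surviving. No new idea beyond the two coalescence identities is needed.
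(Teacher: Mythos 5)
Your proposal is correct and is exactly the deduction the paper has in mind: the paper offers no written proof beyond the remark that the lemma follows from Formulae \eqref{coal:alpha} and \eqref{coal:mat}, and your two-stage coalescence (attach the $s$-cycle at $u$, then the $t$-cycle at $v$, using \eqref{coal:mat} to evaluate $\psi_\alpha(G_u(s),v)$) carries that out with the correct expansions and the four coefficient groups matching \eqref{eq:b1}.
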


	In \cite{belardoCombinatorialApproachComputing2010a}, Francesco Belardo et al. present a Schwenk-like formula for weighted digraphs.
	\begin{thm}[\cite{belardoCombinatorialApproachComputing2010a}]
		Let $A=(a_{i,j})$ be any square matrix of order $n$, and let $G\left(=G_{A}\right)$ be its Coates digraph with $V(G)=\{1,2,\dots,n\}$. If $v$ is a fixed vertex of G then
		$$
		\phi(G)=\left(x-a_{v v}\right) \phi(G-v)-\sum_{\overrightarrow{C} \in \mathcal{C}_{v}(G)} \omega_{A}(\overrightarrow{C}) \phi(G-V(\overrightarrow{C})),
		$$
		where $\mathcal{C}_{v}(G)$ is the set of directed cycles of $G$ of length $\geqslant 2$ passing through $v$ and $\omega_{A}(\overrightarrow{C})=\prod \limits_{(i,j) \in E(\overrightarrow{C})} a_{i j}$.
	\end{thm}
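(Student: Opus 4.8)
The plan is to obtain the formula directly from the Leibniz (permutation) expansion of $\det(xI-A)$, organised by the cycle structure of the permutations; this is essentially the classical Coates determinant expansion. First I would write
$$\phi(G)=\det(xI-A)=\sum_{\sigma\in S_n}\operatorname{sgn}(\sigma)\prod_{i=1}^n (xI-A)_{i,\sigma(i)}$$
and note that a term survives only if, for every $i$ with $\sigma(i)\neq i$, the arc $(i,\sigma(i))$ is present in the Coates digraph. Decomposing each such $\sigma$ into disjoint cycles, the surviving permutations are in bijection with the spanning linear subdigraphs $L$ of the Coates digraph $G_{xI-A}$ of the matrix $xI-A$ — vertex-disjoint unions of directed cycles, where a fixed point $\sigma(i)=i$ is read as the loop at $i$ of weight $x-a_{ii}$ and an arc $(i,j)$ with $i\neq j$ carries weight $-a_{ij}$. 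Since a permutation that is a product of $c$ disjoint cycles on $n$ symbols has sign $(-1)^{n-c}$, this gives
$$\phi(G)=\sum_{L}(-1)^{n-c(L)}\,w(L),$$
where $c(L)$ counts the cycles of $L$ (loops included) and $w(L)$ is the product of the arc weights of $L$.

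Next I would split this sum according to the cycle of $L$ through the distinguished vertex $v$: either $v$ sits on its loop, or $v$ lies on a directed cycle $\overrightarrow{C}\in\mathcal{C}_v(G)$ of some length $\ell\ge 2$. In the first case, removing the loop at $v$ leaves a spanning linear subdigraph of the Coates digraph of $xI-A$ restricted to $V\setminus\{v\}$; passing from $L$ on $n$ vertices to $L'$ on $n-1$ vertices drops $c$ by $1$, so the sign $(-1)^{n-c}$ is unchanged, and factoring out the loop weight gives the contribution $(x-a_{vv})\phi(G-v)$. In the second case, fix $\overrightarrow{C}$ and delete $V(\overrightarrow{C})$: what remains is a spanning linear subdigraph of the Coates digraph of $xI-A$ restricted to $V\setminus V(\overrightarrow{C})$. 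The weight of $\overrightarrow{C}$ inside $G_{xI-A}$ is $\prod_{(i,j)\in E(\overrightarrow{C})}(-a_{ij})=(-1)^{\ell}\omega_A(\overrightarrow{C})$, and passing from $L$ to the remaining subdigraph changes $n-c$ by $\ell-1$; combining the resulting $(-1)^{\ell-1}$ with the $(-1)^{\ell}$ from the cycle weight leaves a single minus sign, so the total contribution of all $L$ containing $\overrightarrow{C}$ is $-\omega_A(\overrightarrow{C})\,\phi(G-V(\overrightarrow{C}))$. Summing over $\overrightarrow{C}\in\mathcal{C}_v(G)$ and adding the loop term yields the identity.

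The only delicate point — and the step I expect to need the most care — is the sign bookkeeping: one must verify that $\operatorname{sgn}(\sigma)=(-1)^{n-c}$, that deleting a cycle on $\ell$ vertices rescales the global sign by $(-1)^{\ell-1}$, and that this exactly cancels against the $(-1)^{\ell}$ picked up when rewriting the arc weights of $\overrightarrow{C}$ in terms of the entries of $A$. Everything else is the routine dictionary between permutations and vertex-disjoint cycle covers. An alternative would be to Laplace-expand $\det(xI-A)$ along the row indexed by $v$ and then induct, "unrolling" each off-diagonal cofactor into a directed path from its column index back to $v$ (the digraph analogue of Schwenk's original proof for graphs); but the Coates-style argument above is shorter and makes the combinatorial content of each term transparent.
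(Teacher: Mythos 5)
The paper states this theorem only as a citation to Belardo--Li Marzi--Simi\'c and supplies no proof of its own, so there is nothing internal to compare against; your argument is a correct, self-contained proof and is essentially the Coates/Leibniz cycle-cover expansion used in the cited source. The sign bookkeeping is right: a permutation with $c$ cycles (fixed points included) has sign $(-1)^{n-c}$, removing the cycle through $v$ of length $\ell$ changes the exponent $n-c$ by $\ell-1$, and the resulting $(-1)^{\ell-1}$ combines with the $(-1)^{\ell}$ from $\prod_{(i,j)\in E(\overrightarrow{C})}(-a_{ij})=(-1)^{\ell}\omega_A(\overrightarrow{C})$ to produce exactly the single minus sign in front of the cycle sum, while the loop case gives $(x-a_{vv})\phi(G-v)$ with no sign change.
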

	
	Applying above theorem to the $\alpha$-matrix of digraph $G$,  the following lemma holds.
	\begin{lem}\label{lemd3}
		For any vertex $v$ of the digraph $G$,  let $v$ be a vertex of $G$. The characteristic polynomial of $G$ satisfies
		$$\phi_{\alpha}(G)=(x-\alpha d_v^+) \psi_\alpha(G,v)-\sum_{\overrightarrow{C} \in \mathcal{C}_{v}(G)}(1-\alpha)^{|V(\overrightarrow{C} )|}\psi_\alpha(G,V(\overrightarrow{C} )),$$
		where $\mathcal{C}_{v}(G)$ is the set of directed cycles of $G$ of length $\geqslant 2$ passing through $v$.
	\end{lem}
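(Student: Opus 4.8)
The plan is to derive Lemma \ref{lemd3} as a direct specialization of the Belardo et al.\ Schwenk-like formula to the matrix $A=A_\alpha(G)$, which is exactly the Coates-type weighted adjacency matrix whose $(i,j)$ entry is $(1-\alpha)$ when $(v_i,v_j)\in E(G)$ and whose $(i,i)$ entry is $\alpha d_i^+$. The only two things that need to be checked are the interpretation of the diagonal term and the interpretation of the cycle weights. First I would observe that $a_{vv}=\alpha d_v^+$, so the leading term $(x-a_{vv})\phi(G-v)$ becomes $(x-\alpha d_v^+)\psi_\alpha(G,v)$ once we note that the Coates digraph of the principal submatrix $A_\alpha(G)(\{v\})$ is precisely the digraph on $V(G)\setminus\{v\}$ with the inherited weights, so $\phi(G-v)=\psi_\alpha(G,v)$ in our notation.

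Next I would handle the sum over directed cycles. For a directed cycle $\overrightarrow{C}$ of length $\ell=|V(\overrightarrow{C})|\ge 2$ passing through $v$, every arc $(i,j)\in E(\overrightarrow{C})$ contributes a factor $a_{ij}=(1-\alpha)$ to $\omega_A(\overrightarrow{C})=\prod_{(i,j)\in E(\overrightarrow{C})}a_{ij}$; since a cycle on $\ell$ vertices has exactly $\ell$ arcs, this gives $\omega_A(\overrightarrow{C})=(1-\alpha)^{\ell}=(1-\alpha)^{|V(\overrightarrow{C})|}$. Likewise $\phi(G-V(\overrightarrow{C}))$ is the characteristic polynomial of the principal submatrix of $A_\alpha(G)$ obtained by deleting the rows and columns indexed by $V(\overrightarrow{C})$, which is exactly $\psi_\alpha(G,V(\overrightarrow{C}))$ by the definition introduced just before Theorem \ref{thm:b1}. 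Substituting these identifications into the cited formula yields
\[
\phi_\alpha(G)=(x-\alpha d_v^+)\,\psi_\alpha(G,v)-\sum_{\overrightarrow{C}\in\mathcal{C}_v(G)}(1-\alpha)^{|V(\overrightarrow{C})|}\,\psi_\alpha(G,V(\overrightarrow{C})),
\]
which is the claimed identity.

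There is essentially no genuine obstacle here; the statement is a translation exercise, and the main point worth being careful about is simply matching conventions — that the Coates digraph of $A_\alpha(G)$ coincides (as a digraph) with $G$ itself, so that "passing through $v$" in the sense of the Coates digraph agrees with "passing through $v$" in $G$, and that the diagonal entries of $A_\alpha(G)$ are $\alpha d_i^+$ rather than $0$ (which is why the factor $x-\alpha d_v^+$ appears in place of the bare $x$ one would see for the adjacency matrix). Once those identifications are made explicit, the lemma follows immediately from the quoted theorem with no further computation.
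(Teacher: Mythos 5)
Your proof is correct and follows exactly the paper's route: the paper likewise obtains the lemma by applying the Belardo--Li Marzi--Simi\'c formula to $A_\alpha(G)$, merely stating the specialization without spelling out the identifications $a_{vv}=\alpha d_v^+$, $\omega_A(\overrightarrow{C})=(1-\alpha)^{|V(\overrightarrow{C})|}$, and $\phi(G-V(\overrightarrow{C}))=\psi_\alpha(G,V(\overrightarrow{C}))$ that you make explicit. Nothing further is needed.
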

	
	The following lemma is a key ingredient of the proof of main results.
	\begin{lem}\label{lemd4}
		Suppose that $u, v$ are two internal vertices of internal directed path of digraph $G$ and $w$ is a vertex of digraph $H$. Then,  for $0 \leq \alpha \leq 1$,   \[
		\phi_{\alpha}(Gu:wH) =\phi_{\alpha}(Gv:wH).
		\]
	\end{lem}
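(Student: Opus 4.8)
The plan is to reduce the identity to the single claim $\psi_\alpha(G,u)=\psi_\alpha(G,v)$, and then to compute $\psi_\alpha(G,\cdot)$ at an arbitrary internal vertex of the path. First I would apply Corollary \ref{cord2} to both coalescences, writing
\[
\phi_\alpha(Gu:wH)=\phi_\alpha(G)\psi_\alpha(H,w)+\psi_\alpha(G,u)\phi_\alpha(H)-x\,\psi_\alpha(G,u)\psi_\alpha(H,w)
\]
and the analogous formula with $u$ replaced by $v$. Subtracting, every term involving only $H$ or only $\phi_\alpha(G)$ cancels, and one is left with
\[
\phi_\alpha(Gu:wH)-\phi_\alpha(Gv:wH)=\bigl(\psi_\alpha(G,u)-\psi_\alpha(G,v)\bigr)\bigl(\phi_\alpha(H)-x\,\psi_\alpha(H,w)\bigr).
\]
Hence it suffices to prove $\psi_\alpha(G,u)=\psi_\alpha(G,v)$.

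Write the internal path as $p_0p_1\cdots p_\ell p_{\ell+1}$, where $p_1,\dots,p_\ell$ are its internal vertices, so that each $p_s$ with $1\le s\le\ell$ has $d^+_{p_s}(G)=d^-_{p_s}(G)=1$, with unique out-arc $(p_s,p_{s+1})$ and unique in-arc $(p_{s-1},p_s)$. It is enough to show that $\psi_\alpha(G,p_i)$ is independent of $i\in\{1,\dots,\ell\}$. Fix $i$ and look at $xI-A_\alpha(G)(\{p_i\})$: deleting $p_i$ destroys the only out-arc of $p_{i-1}$ (when $i\ge 2$), and since the diagonal entry of $p_{i-1}$ equals $x-\alpha d^+_{p_{i-1}}(G)=x-\alpha$, the row indexed by $p_{i-1}$ now has a single nonzero entry, namely $x-\alpha$ on the diagonal. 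A cofactor expansion along that row, repeated for $p_{i-1},p_{i-2},\dots,p_1$, peels off these $i-1$ vertices and gives
\[
\psi_\alpha(G,p_i)=(x-\alpha)^{\,i-1}\,\phi\bigl(A_\alpha(G)(\{p_1,\dots,p_i\}),x\bigr).
\]

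Symmetrically, in $A_\alpha(G)(\{p_1,\dots,p_i\})$ the vertex $p_{i+1}$ has lost its unique in-arc $(p_i,p_{i+1})$ while its diagonal entry is still $x-\alpha$, so the \emph{column} indexed by $p_{i+1}$ has a single nonzero entry; expanding along the columns of $p_{i+1},\dots,p_\ell$ in turn removes all of them and contributes a further factor $(x-\alpha)^{\ell-i}$, leaving
\[
\psi_\alpha(G,p_i)=(x-\alpha)^{\,\ell-1}\,\phi\bigl(A_\alpha(G)(\{p_1,\dots,p_\ell\}),x\bigr),
\]
which is manifestly independent of $i$. Therefore $\psi_\alpha(G,u)=\psi_\alpha(G,v)$, and the lemma follows from the displayed difference formula. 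The only point needing care is the verification that the relevant row (resp.\ column) genuinely collapses to its diagonal entry after the deletions — and this is precisely where the hypothesis that $u,v$ lie on an \emph{internal} directed path (in-degree and out-degree $1$, with diagonal entry $x-\alpha$) enters — but beyond this bookkeeping there is no real difficulty, and one may note that the computation of $\psi_\alpha(G,p_i)$ could equally be organized via the Schwenk-type identity of Lemma \ref{lemd3}.
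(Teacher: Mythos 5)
Your proof is correct, and its first half coincides exactly with the paper's: both apply Corollary \ref{cord2} to the two coalescences and reduce the lemma to the single equality $\psi_\alpha(G,u)=\psi_\alpha(G,v)$. Where you diverge is in how that equality is justified. The paper applies the Schwenk-type formula of Lemma \ref{lemd3} at $u$ and at $v$ and observes that $d^+_u(G)=d^+_v(G)=1$ and $\mathcal{C}_{u}(G)=\mathcal{C}_{v}(G)$ (any directed cycle meeting an internal vertex of the path must traverse the entire path, hence meets both $u$ and $v$); equating the two resulting expressions for $\phi_{\alpha}(G)$ and cancelling the factor $x-\alpha$ gives the equality in one line. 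You instead compute $\psi_\alpha(G,p_i)$ directly by cofactor expansion, peeling off the predecessors $p_{i-1},\dots,p_1$ along rows and the successors $p_{i+1},\dots,p_\ell$ along columns. Your bookkeeping is sound: the key points --- that the diagonal entries of the principal submatrices are still taken from $D(G)$ and equal $x-\alpha$ at internal vertices, and that deleting $p_i$ leaves the row of $p_{i-1}$ (respectively the column of $p_{i+1}$) with only its diagonal entry --- are exactly right. Your route is longer but self-contained (it does not need Lemma \ref{lemd3}) and yields the explicit closed form $\psi_\alpha(G,p_i)=(x-\alpha)^{\ell-1}\phi\bigl(A_\alpha(G)(\{p_1,\dots,p_\ell\}),x\bigr)$, which is strictly more information than the bare equality; as you note at the end, the Schwenk identity gives the shorter argument, and that is the one the paper takes.
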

	\begin{proof}
		By Corollary \ref{cord2},  we have
		\begin{align*}
		\phi_{\alpha}(Gu:wH) & =\psi_\alpha(G,u)(\phi_{\alpha}(H)-x\psi_\alpha(H,w))+\phi_{\alpha}(G)\psi_\alpha(H,w), \\
		\phi_{\alpha}(Gv:wH) & =\psi_\alpha(G,v)(\phi_{\alpha}(H)-x\psi_\alpha(H,w))+\phi_{\alpha}(G)\psi_\alpha(H,w).
		\end{align*}
		
		Since $d^+_u(G)=d^+_v(G)$ and $\mathcal{C}_{u}(G)=\mathcal{C}_{v}(G)$, 
		from Lemma \ref{lemd3},  we have
		\begin{align*}
		\psi_\alpha(G,u) =\psi_\alpha(G,v).
		\end{align*}
		Hence, 
		\begin{align*}
		& \phi_{\alpha}(Gu:wH)-\phi_{\alpha}(Gv:wH)\,                                                         \\
		= & (\psi_\alpha(G,u)-\psi_\alpha(G,v))(\phi_{\alpha}(H)-x\psi_\alpha(H,w))=0. \qedhere
		\end{align*}
	\end{proof}

	For the real vector $\mathbf{x}=(x_1,..., x_p)$, $p\in \mathbb{N}$,
	denote with $\mathbf{x}_\uparrow=(x_{[1]},..., x_{[p]})$ the vector
	where all components of $\mathbf{x}$ are arranged in ascending
	order.

	\begin{definition} \textbf{\cite{marshallInequalitiesTheoryMajorization2011}}
		A non-negative vector $\mathbf{x} = (x_1, ..., x_p)$, $p \in
		\mathbb{N}$, \emph{weakly majorizes} a non-negative vector
		$\mathbf{y} = (y_1, ..., y_p)$ (which is denoted with
		$\mathbf{x}\succeq \mathbf{y}$) if
		$$\sum_{i=1}^k x_{[i]} \le \sum_{i=1}^k y_{[i]} \text{ for all }k=1,...,p.$$
		\begin{enumerate}[(1).]
			\item If $\mathbf{x}_\uparrow \neq \mathbf{y}_\uparrow$, then $\mathbf{x}$
			is said to \emph{strictly weakly majorize} $\mathbf{y}$ (which is
			denoted with $\mathbf{x}\succ_w \mathbf{y}$).
			\item If $\mathbf{x}\succ_w \mathbf{y}$ and $\sum_{i=1}^p x_i = \sum_{i=1}^p y_i$, 
			$\mathbf{x}$
			is said to \emph{strictly majorize} $\mathbf{y}$ (which is
			denoted with $\mathbf{x}\succ \mathbf{y}$).
		\end{enumerate}
	\end{definition}

	Let $\mathbf{x}, \mathbf{y} \in N^p$.  We say that $\mathbf{x}$ covers $\mathbf{y}$ if $\mathbf{y} \prec \mathbf{x}$ 
	and there exists no vector $\mathbf{z}\in N^k$ such that $\mathbf{y} \prec \mathbf{z}\prec \mathbf{x}$. Namely, 
	there exists some $i<j$ such that $\mathbf{x}=\mathbf{y}+\mathbf{e_i}-\mathbf{e_j}$.

	\begin{lem}\cite{marshallInequalitiesTheoryMajorization2011}\label{lem:major}
		If $\alpha$ and $\beta$ are two nonincreasing integer sequences  in $N^{p}$ with $\alpha \prec \beta$,  then there exist $\mathbf{x}_1,\mathbf{x}_2,\dots, \mathbf{x}_l \in N^{k}$ such that $\alpha=\mathbf{x}_1, \beta=\mathbf{x}_l$ and  $\mathbf{x}_i$ is covered by $\mathbf{x}_{i+1}$ for $1\leq i<l$.
	\end{lem}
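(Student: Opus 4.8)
The plan is to induct on an integer-valued potential that records how far apart $\alpha$ and $\beta$ sit in the majorization order. First I would rewrite the hypothesis concretely: since $\alpha,\beta$ are nonincreasing and $\sum_j\alpha_j=\sum_j\beta_j$, the relation $\alpha\prec\beta$ is equivalent to the system of prefix inequalities $\sum_{j=1}^{i}\beta_j\ge\sum_{j=1}^{i}\alpha_j$ for $1\le i\le p$ (the ascending-order convention in the definition of weak majorization merely flips the inequality), together with $\alpha\ne\beta$. Consequently
\[
\Phi(\alpha,\beta):=\sum_{i=1}^{p}\Bigl(\sum_{j=1}^{i}\beta_j-\sum_{j=1}^{i}\alpha_j\Bigr)
\]
is a nonnegative integer, and $\Phi(\alpha,\beta)=0$ iff $\alpha=\beta$ (agreeing prefix sums force agreeing entries). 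The identity driving the induction is the telescoping additivity $\Phi(\alpha,\beta)=\Phi(\alpha,\gamma)+\Phi(\gamma,\beta)$, valid for any nonincreasing $\gamma$ with $\alpha\prec\gamma\prec\beta$, together with the fact that each summand is $\ge 1$ whenever the two (nonincreasing) vectors involved are distinct.

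Given this, the induction is short. Since $\alpha\prec\beta$ we have $\Phi(\alpha,\beta)\ge 1$. If $\beta$ covers $\alpha$, the chain $\mathbf{x}_1=\alpha,\ \mathbf{x}_2=\beta$ already works. Otherwise, by the definition of ``covers'' there is $\mathbf{z}\in N^{p}$ with $\alpha\prec\mathbf{z}\prec\beta$; replacing $\mathbf{z}$ by its nonincreasing rearrangement $\gamma$ (which has the same sorted multiset, hence still satisfies $\alpha\prec\gamma\prec\beta$) we may take $\gamma$ nonincreasing. Then $\Phi(\alpha,\gamma)$ and $\Phi(\gamma,\beta)$ are both positive and sum to $\Phi(\alpha,\beta)$, so both are strictly smaller; the inductive hypothesis applied to $(\alpha,\gamma)$ and to $(\gamma,\beta)$ gives covering chains $\alpha=\mathbf{x}_1,\dots,\mathbf{x}_s=\gamma$ and $\gamma=\mathbf{y}_1,\dots,\mathbf{y}_t=\beta$, and gluing them at the common endpoint $\gamma$ yields the desired chain $\alpha=\mathbf{x}_1,\dots,\mathbf{x}_s=\mathbf{y}_1,\dots,\mathbf{y}_t=\beta$.

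There is no delicate computation here, so the main thing to get right is the translation between the definition of $\prec$ (stated with ascending-order partial sums) and the prefix-sum inequalities, plus checking that $\Phi$ is a strictly decreasing nonnegative integer along the recursion so that it terminates (it bottoms out at $\Phi=1$, which forces $\beta$ to cover $\alpha$). I would also add a remark reconciling the statement with the ``Namely $\dots$'' clause: if one wants the chain realized by explicit unit transfers $\mathbf{x}_{i+1}=\mathbf{x}_i+\mathbf{e}_a-\mathbf{e}_b$ with $a<b$, one further needs the combinatorial description of covers in the dominance order --- a cover of $\beta$ from below is obtained by moving a single unit from a strict descent of $\beta$ to the start of the next plateau --- but this refinement is not needed for the existence statement as phrased.
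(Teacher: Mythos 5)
Your argument is correct, but note that the paper does not prove this lemma at all --- it is quoted verbatim from Marshall--Olkin--Arnold as a citation --- so there is no in-paper proof to compare against. Your self-contained induction on the potential $\Phi(\alpha,\beta)=\sum_{i=1}^{p}\bigl(\sum_{j\le i}\beta_j-\sum_{j\le i}\alpha_j\bigr)$ is sound: the translation from the ascending-order convention to descending prefix sums is handled correctly, $\Phi$ is a nonnegative integer vanishing exactly when the sorted vectors coincide, the telescoping identity $\Phi(\alpha,\beta)=\Phi(\alpha,\gamma)+\Phi(\gamma,\beta)$ holds, and since any strict intermediate $\gamma$ makes both pieces positive, strong induction on $\Phi$ terminates with the two-term chain precisely when $\beta$ covers $\alpha$. (Minor wording point: ``each summand is $\ge 1$'' should refer to the two terms $\Phi(\alpha,\gamma)$ and $\Phi(\gamma,\beta)$, not the $i$-indexed summands.) Your closing caveat is the one substantive gap relative to how the lemma is actually used: in Theorem 3.1 the paper invokes the ``Namely'' clause, i.e.\ that a covering step is realized as a unit transfer $\mathbf{x}_{i+1}=\mathbf{x}_i+\mathbf{e}_a-\mathbf{e}_b$ with $a<b$, so a complete replacement for the citation would also need the standard combinatorial description of covers in the dominance order on integer partitions, which your chain-existence argument alone does not supply. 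Also, the mismatch between $N^{p}$ in the hypothesis and $N^{k}$ in the conclusion is a typo in the paper's statement, not an issue with your proof.
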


	Given positive integers numbers $m$ and $k$ with $m \geq k,  n = qk + r$ and $0 \leq  r < k$,   it is well known \cite{marshallInequalitiesTheoryMajorization2011}
	that the maximal and the minimal elements of the set
	\begin{equation*}S_m^k =\left\{ \mathbf{x\in }\text{ }%
	\mathbb{N}_{ +}
	^{k}:x_{1}\geq x_{2}\geq \ldots \geq x_{k}\geq 2, \sum_{i = 1}^{k}{x_i} = m\right\}
	\end{equation*}%
	with respect to the majorization order are respectively%
	\begin{equation*}
	\mathbf{x}^{\ast }\left(S_m^k \right) = (m - 2k)\mathbf{e}_{1} +2\mathbf{e} \text{ \ \ \ \ \
		\ \ \ \ and \ \ \ \ \ \ \ \ \ \ }\mathbf{x}_{\ast }\left(S_m^k \right)
	= q\mathbf{e}+ \mathbf{f},
	\end{equation*}
	where $\mathbf{e}$ be the all-ones vector of order $k$,  $\mathbf{e}_1=(1,0,\dots, 0)$ and $\mathbf{f}=(f_1,..., f_k)$ with $f_i=1$ for $1\leq i\leq r$ otherwise $f_i=0$.

	\section{The \texorpdfstring{$\alpha$}{α}-spectral radius of rose digraphs and tri-ring digraphs}

	For $m \geq k \geq 1$,  let $\mathbf{p}=(p_1, p_2, \ldots, p_k) \in S_m^{k}$  be a partition of $m$.
	We call the digraph $\bigodot\limits_{i=1}^kG_i(v_i)$ (depicted in Fig. \ref{Fig:Rose}) rose digraph  when  $G_i=\overrightarrow{C}_{p_i}$, which is denoted by $\Rose(\mathbf{p})$.
	When $k=2$, $\Rose(\mathbf{p})$ is $\infty$-digraph, written as $\infty(p_1, p_2)$.
	In \cite{MR2954483} and \cite{MR3777498}, the digraph $\Rose(\mathbf{p})$ is written as $\widetilde{\infty}(p_1-1, p_2-1, \ldots, p_k-1)$ and $\widetilde{\infty}_1(p_1-1, p_2-1, \ldots, p_k-1)$, respectively.

	\begin{figure}
		\begin{adjustbox}{center}
			\includegraphics[page=1,  trim=0 42 2 20,  clip, width=0.5\textwidth]{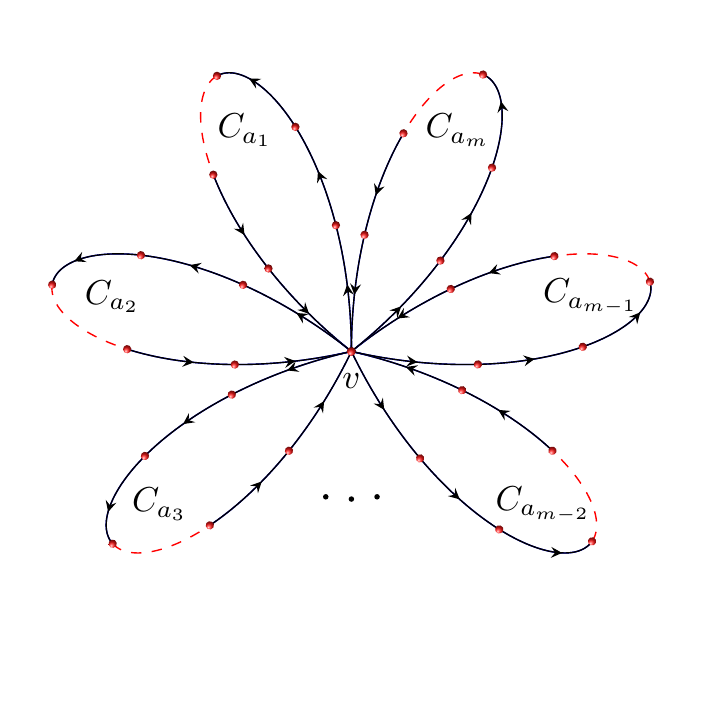}
		\end{adjustbox}
		\caption{\texorpdfstring{$\Rose(K)$}{Rose(K)} with \texorpdfstring{$K=(a_1, a_2, \cdots, a_m)$}{K=a1,a2,... }.} \label{Fig:Rose}
	\end{figure}

	Take $\gamma_1=\mathbf{x}^*(S_m^{k})$ and $\gamma_2=\mathbf{x}_*(S_m^{k})$
	to be the maximal and the minimal elements of a subset $S_m^{k} \subseteq
	\mathbb{R}^{n}$ with respect to the majorization order.  Denote $\mathcal{R}_{m}^k= \{\Rose(\mathbf{p}) \mid  \mathbf{p}
	\in S_m^{k}\}$.
	
	In \cite{MR2954483} and \cite{MR3777498}, Guo et al. and Li et al. showed that among  $\mathcal{R}_{m}^k$,  the digraph $\Rose(\mathbf{\gamma}_1)$ ($\Rose(\mathbf{\gamma}_2)$) is the unique digraph with  the maximal (minimal) adjacency spectral radius and signless Laplacian spectral radius, respectively.
	Next we determine the unique digraph with  the maximal (minimal) $\alpha$-spectral radius among  $\mathcal{R}_{m}^k$, respectively.
	
	\begin{lem}\label{lem3} Let $H=G_v(p, q)$ and $H'=G_v(p+1, q-1)$.
		If $p\ge q$,  then
		$$\rho_{\alpha}(H)< \rho_{\alpha}(H').$$
	\end{lem}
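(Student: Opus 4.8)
The plan is to compare the characteristic polynomials $\phi_\alpha(H)$ and $\phi_\alpha(H')$ using the coalescence formula from Corollary \ref{cor:b1}. Since both $H$ and $H'$ are obtained from the strongly connected digraph $G$ by attaching two directed cycles at the single vertex $v$ (with the cycle lengths redistributed from $(p,q)$ to $(p+1,q-1)$), set $A'=(A_\alpha)_v(G)$ and $n = (p-1)+(q-1) = (p)+(q-2)$. Writing $d = \frac{1-\alpha}{x-\alpha}$ as in the paper's convention, Corollary \ref{cor:b1} gives
\[
\phi_\alpha(H) = (x-\alpha)^{n}\!\left(\phi_\alpha(G) - \phi(A')\bigl(2\alpha + (1-\alpha)(d^{\,p-1}+d^{\,q-1})\bigr)\right),
\]
\[
\phi_\alpha(H') = (x-\alpha)^{n}\!\left(\phi_\alpha(G) - \phi(A')\bigl(2\alpha + (1-\alpha)(d^{\,p}+d^{\,q-2})\bigr)\right).
\]
Subtracting, the common terms cancel and we obtain
\[
\phi_\alpha(H') - \phi_\alpha(H) = (x-\alpha)^{n}(1-\alpha)\,\phi(A')\bigl(d^{\,p-1}+d^{\,q-1} - d^{\,p} - d^{\,q-2}\bigr).
\]
The bracketed expression factors as $d^{\,q-2}(d-1)(d^{\,p-q+1}-1)$, which I would verify by grouping $d^{\,p-1}-d^{\,q-2} = d^{\,q-2}(d^{\,p-q+1}-1)$ against $d^{\,q-1}-d^{\,p} = -d^{\,q-1}(d^{\,p-q+1}-1) \cdot$ (adjust exponents carefully) — the key is that $p-q+1 \ge 1$ since $p \ge q$.

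Next I would analyze signs on the region $x > \rho_\alpha(H')$ (equivalently $x$ just above $\rho_\alpha(H)$; note $H$ is strongly connected so $\rho_\alpha(H) > \alpha$, and we work with $x > \rho_\alpha(H)$ so that $x - \alpha > 0$, hence $0 < d < 1$ for $x$ large, and more precisely $d < 1$ whenever $x > 1$, but we need the finer statement $d<1$ for $x$ near the spectral radii). Actually the cleaner route: evaluate at $x = \rho_\alpha(H)$. There $\phi_\alpha(H)=0$, so the sign of $\phi_\alpha(H')(\rho_\alpha(H))$ equals the sign of $(x-\alpha)^n(1-\alpha)\phi(A')\,d^{\,q-2}(d-1)(d^{\,p-q+1}-1)$ at $x=\rho_\alpha(H)$. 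I need: (a) $(x-\alpha)^n > 0$ and $(1-\alpha) \ge 0$ (with $\alpha<1$; the case $\alpha=1$ can be handled separately or is excluded since $A_\alpha$ is defined for $0\le\alpha<1$, though the lemma says $0\le\alpha\le 1$ — I'd note $\alpha=1$ gives $A_1=D$ with constant... actually just restrict or treat trivially); (b) $\phi(A')=\psi_\alpha(G,v) = \det(xI - (A_\alpha)_v(G))$ evaluated at $x=\rho_\alpha(H)$ is positive — this follows because $\rho_\alpha(H) > \rho((A_\alpha)_v(G))$ by Lemma \ref{lemc2}(ii) applied to the principal submatrix, so $x$ exceeds the largest root of $\phi(A')$ and the monic polynomial is positive there; (c) since $0 < d = \frac{1-\alpha}{x-\alpha} < 1$ at $x = \rho_\alpha(H)$ — this needs $x - \alpha > 1 - \alpha$, i.e. $x > 1$, which holds because $\rho_\alpha(H) \ge \delta^+(H) $ and... hmm, actually $H$ contains a vertex of outdegree $\ge 2$ (the coalescence vertex has outdegree at least $2$), but that alone doesn't force $\rho_\alpha \ge$ anything large; I would instead argue $d<1$ directly: a strongly connected digraph that is not a single directed cycle has $\rho_\alpha > 1$, and $H$ has $\ge 2$ cycles through $v$ so it's not a single cycle. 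Granting $d<1$: then $d-1<0$, $d^{\,p-q+1}-1 \le 0$ (with equality iff $p=q$), $d^{\,q-2}>0$, so the product $d^{\,q-2}(d-1)(d^{\,p-q+1}-1) \ge 0$, and it is $>0$ when $p > q$.

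Combining: when $p > q$, $\phi_\alpha(H')(\rho_\alpha(H)) > 0$. Since $H'$ is strongly connected, $\phi_\alpha(H')(x) > 0$ for all $x \ge \rho_\alpha(H')$ and $\phi_\alpha(H')$ is eventually positive with its largest real root at $\rho_\alpha(H')$; as $\phi_\alpha(H')$ is negative just to the left of $\rho_\alpha(H')$ and positive to the right, $\phi_\alpha(H')(\rho_\alpha(H)) > 0$ forces $\rho_\alpha(H) > \rho_\alpha(H')$ — wait, I need to double check the direction. Let me reconsider: $\phi_\alpha(H')(x) > 0$ for $x > \rho_\alpha(H')$ and $< 0$ for $x$ slightly less. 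So $\phi_\alpha(H')(\rho_\alpha(H)) > 0 \Rightarrow \rho_\alpha(H) > \rho_\alpha(H')$, which is the \emph{opposite} of the claim. So I must have a sign error and in fact need $\phi_\alpha(H')(\rho_\alpha(H)) < 0$, i.e. the product must be negative, meaning I should recheck the factorization sign — likely $d^{p-1}+d^{q-1}-d^p-d^{q-2} = (1-d)(d^{q-2}-d^{p-1})= (1-d)d^{q-2}(1 - d^{p-q+1}) \ge 0$, making $\phi_\alpha(H')-\phi_\alpha(H) = (x-\alpha)^n(1-\alpha)\phi(A')\cdot[\text{nonneg}] \ge 0$ at $x=\rho_\alpha(H)$, so $\phi_\alpha(H')(\rho_\alpha(H)) \ge 0$ still. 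The resolution: evaluate instead at $x = \rho_\alpha(H')$, where $\phi_\alpha(H')=0$, giving $\phi_\alpha(H)(\rho_\alpha(H')) = -(x-\alpha)^n(1-\alpha)\phi(A')(1-d)d^{q-2}(1-d^{p-q+1}) < 0$ when $p>q$; since $\phi_\alpha(H)(x)<0$ for $x<\rho_\alpha(H)$ and $>0$ for $x>\rho_\alpha(H)$, this yields $\rho_\alpha(H') < \rho_\alpha(H)$ — still wrong direction!

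The main obstacle, then, is pinning down the correct sign, which hinges on whether $d<1$ or $d>1$ at the relevant spectral radius and on the parity/sign of $\phi(A')$ there; I expect the truth is that $\rho_\alpha$ of these digraphs is actually \emph{less than} $1$ is false, so $d<1$, and a more careful bookkeeping (perhaps $\phi(A') $ is what needs the sign care, or the exponent $n$ differs between $H$ and $H'$ — indeed $H$ has $n_H = (p-1)+(q-1)$ and $H'$ has $n_{H'}=(p)+(q-2)$, which are \emph{equal}, so that's fine). I would resolve this by carefully redoing the subtraction keeping the $(x-\alpha)^n$ factor explicit, factoring $d^{p-1}+d^{q-1}-d^p-d^{q-2}$ correctly, and then using monotonicity of $\phi_\alpha(H')$ on $[\rho_\alpha(H'),\infty)$ together with the value at $\rho_\alpha(H)$ — the correct conclusion $\rho_\alpha(H)<\rho_\alpha(H')$ should emerge once the sign of $\phi_\alpha(H')(\rho_\alpha(H))$ is shown to be negative, which I anticipate follows because moving mass from the shorter cycle to the longer one \emph{increases} the spectral radius (consistent with the majorization results quoted), so the factorization must actually give $d^{p-1}+d^{q-1}-d^p-d^{q-2} < 0$ on the relevant range — i.e. $d>1$ there. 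Thus the real content is the claim that $\rho_\alpha(H) < 1$, equivalently $x-\alpha<1-\alpha$, which would make $d>1$; I would establish this via Lemma \ref{lemc2}(iv) by exhibiting a suitable test vector, or by noting these near-minimal digraphs have small spectral radius. That verification — getting $d>1$ at $x=\rho_\alpha(H)$ and hence the strict inequality — is the step I expect to be the crux.
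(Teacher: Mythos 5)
Your setup is the paper's own (apply Corollary \ref{cor:b1} to $H$ and $H'$, subtract, factor), but the argument never closes because of a sign error in the factorization that you do not resolve, and the fix you propose at the end rests on a false claim. The correct grouping is
\[
d^{p-1}+d^{q-1}-d^{p}-d^{q-2}=d^{p-1}(1-d)+d^{q-2}(d-1)=(1-d)\bigl(d^{p-1}-d^{q-2}\bigr),
\]
whereas both expressions you propose, $d^{q-2}(d-1)(d^{p-q+1}-1)$ and $(1-d)(d^{q-2}-d^{p-1})$, expand to $d^{p}+d^{q-2}-d^{p-1}-d^{q-1}$, i.e.\ the \emph{negative} of the target. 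Since $p\ge q\ge 2$ gives $p-1>q-2$, for $0<d<1$ we get $d^{p-1}<d^{q-2}$, so the bracket is strictly negative and
$\phi_{\alpha}(H')-\phi_{\alpha}(H)=(x-\alpha)^{p+q-2}(1-\alpha)\,\psi_\alpha(G,v)\,(1-d)(d^{p-1}-d^{q-2})<0$
wherever $x>1$ and $\psi_\alpha(G,v)>0$. Evaluating at $x=\rho_\alpha(H)$ (where $\psi_\alpha(G,v)>0$, as you correctly argued via the principal-submatrix comparison) gives $\phi_\alpha(H')(\rho_\alpha(H))<0$, and since $\phi_\alpha(H')\ge 0$ on $[\rho_\alpha(H'),\infty)$ this forces $\rho_\alpha(H)<\rho_\alpha(H')$ --- the claimed direction, with no further input needed.

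The ``crux'' you end on --- that one must show $d>1$, equivalently $\rho_\alpha(H)<1$, at the relevant root --- is false and would have sent you in the wrong direction. $H$ is strongly connected with non-constant outdegree (the coalescence vertex has outdegree at least $3$ while the internal cycle vertices have outdegree $1$), so Lemma \ref{lemc2}(iii) (or (ii), since $H$ properly contains a directed cycle) gives $\rho_\alpha(H)>1$, hence $0<d<1$ there, exactly as the paper asserts. Your heuristic that moving length from the shorter to the longer cycle increases the radius is consistent with the corrected sign, not in tension with it; everything else in your write-up (equality of the exponents of $(x-\alpha)$ for $H$ and $H'$, positivity of $\psi_\alpha(G,v)$, the root-comparison logic) is sound.
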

	
	\begin{proof}
		Let $d=\frac{1-\alpha}{x-\alpha}$. By Corollary \ref{cor:b1},  we have
		\begin{align*}
		\phi_{\alpha}(H)  & =(x-\alpha)^{p+q-2}\left(\phi_{\alpha}(G)-\psi_\alpha(G,v)\big(2\alpha+(1-\alpha)(d^{p-1}+d^{q-1})\big)\right), \\
		\phi_{\alpha}(H') & =(x-\alpha)^{p+q-2}\left( \phi_{\alpha}(G)-\psi_\alpha(G,v)\big(2\alpha+(1-\alpha)(d^{p}+d^{q-2})\big)\right).
		\end{align*}
		Since  $x>1$ and $p\ge q$, we have $0<d<1$ and
		\begin{align*}
		& \phi_{\alpha}(H')-\phi_{\alpha}(H)                                                   \\
		= & (x-\alpha)^{p+q-2}\psi_\alpha(G,v)(1-\alpha)(d^{p-1}+d^{q-1}-d^{p}-d^{q-2}) \\
		= & (x-\alpha)^{p+q-2}\psi_\alpha(G,v)(1-\alpha)(1-d)(d^{p-1}-d^{q-2 })<0.
		\end{align*}
		So,  $\rho_{\alpha}(H)< \rho_{\alpha}(H').$
		
	\end{proof}

	By Lemma \ref{lem3} and Lemma \ref{lem:major},  we have the following theorem.
	\begin{thm}\label{thm1}
		For $\mathbf{p},  \mathbf{q} \in S_m^{k}$,  if $\mathbf{p}  \preceq  \mathbf{q} $,
		then $\rho_{\alpha}(\Rose(\mathbf{p}))<\rho_{\alpha}(\Rose(\mathbf{q}))$.  Furthermore, $\Rose(\mathbf{\gamma}_1)$ and $\Rose(\mathbf{\gamma}_2)$  are the unique digraphs with the maximal and  minimal $\alpha$-spectral radius among $\mathcal{R}_{m}^k$, respectively.
	\end{thm}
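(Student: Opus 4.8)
Proof proposal. The plan is to reduce the statement to the single cycle-exchange inequality of Lemma~\ref{lem3} and then to walk along a majorization chain furnished by Lemma~\ref{lem:major}. Since $\Rose(\mathbf p)$ depends only on the multiset $\{p_1,\dots,p_k\}$, I may assume $\mathbf p$ and $\mathbf q$ are written in nonincreasing order, and since the asserted inequality is strict I may assume $\mathbf p\neq\mathbf q$ as multisets (otherwise $\Rose(\mathbf p)\cong\Rose(\mathbf q)$ and there is nothing to prove); in particular $k\ge 2$ and, because $\sum p_i=\sum q_i=m$, $\mathbf p\prec\mathbf q$ in the sense of strict majorization.

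First I would apply Lemma~\ref{lem:major} not to $\mathbf p$ and $\mathbf q$ directly but to the shifted vectors $\mathbf p-2\mathbf e$ and $\mathbf q-2\mathbf e$, which are nonincreasing vectors in $\mathbb N^k$ with $\mathbf p-2\mathbf e\prec\mathbf q-2\mathbf e$ (majorization is unaffected by subtracting a common constant from every coordinate, since the coordinate sums stay equal). This yields nonincreasing integer vectors $\mathbf z_1=\mathbf p-2\mathbf e,\dots,\mathbf z_l=\mathbf q-2\mathbf e$ in $\mathbb N^k$ with $\mathbf z_{j+1}=\mathbf z_j+\mathbf e_s-\mathbf e_t$ for some $s<t$ (depending on $j$). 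Putting $\mathbf x_j:=\mathbf z_j+2\mathbf e$ I obtain $\mathbf x_1=\mathbf p$, $\mathbf x_l=\mathbf q$, each $\mathbf x_j\in S_m^k$ (all entries are $\ge 2$ since $\mathbf z_j\in\mathbb N^k$, and the coordinate sum is $m$), and $\mathbf x_{j+1}=\mathbf x_j+\mathbf e_s-\mathbf e_t$; thus each $\Rose(\mathbf x_j)$ is a genuine member of $\mathcal R_m^k$, which is exactly the point where the shift by $2\mathbf e$ is needed.

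Next, fix $j$ and compare $\Rose(\mathbf x_j)$ with $\Rose(\mathbf x_{j+1})$. Writing $a=(\mathbf x_j)_s$ and $b=(\mathbf x_j)_t$, we have $a\ge b$ (the $\mathbf x_j$ are nonincreasing and $s<t$), and the two multisets of cycle lengths coincide except that $\mathbf x_j$ contains $a,b$ where $\mathbf x_{j+1}$ contains $a+1,b-1$ (with $b-1\ge 2$, since $\mathbf x_{j+1}\in S_m^k$). Let $G^{(j)}$ be the strongly connected digraph obtained by coalescing, at one common vertex $v$, the $k-2$ directed cycles whose lengths are the remaining parts of $\mathbf x_j$ (a single vertex when $k=2$). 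Then $\Rose(\mathbf x_j)=G^{(j)}_v(a,b)$ and $\Rose(\mathbf x_{j+1})=G^{(j)}_v(a+1,b-1)$ with $a\ge b$, so Lemma~\ref{lem3} gives $\rho_{\alpha}(\Rose(\mathbf x_j))<\rho_{\alpha}(\Rose(\mathbf x_{j+1}))$. Chaining these inequalities for $j=1,\dots,l-1$ yields $\rho_{\alpha}(\Rose(\mathbf p))<\rho_{\alpha}(\Rose(\mathbf q))$.

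For the ``furthermore'' part, recall that $\gamma_1=\mathbf x^*(S_m^k)$ and $\gamma_2=\mathbf x_*(S_m^k)$ are the unique maximum and the unique minimum of $S_m^k$ under the majorization order; hence every $\mathbf p\in S_m^k$ with $\mathbf p_\uparrow\neq(\gamma_1)_\uparrow$ satisfies $\mathbf p\prec\gamma_1$, and every $\mathbf p$ with $\mathbf p_\uparrow\neq(\gamma_2)_\uparrow$ satisfies $\gamma_2\prec\mathbf p$; applying the inequality just proved shows that $\Rose(\gamma_1)$ and $\Rose(\gamma_2)$ are, respectively, the unique digraphs attaining the maximum and the minimum $\alpha$-spectral radius in $\mathcal R_m^k$. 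The only substantive step is the exchange estimate of Lemma~\ref{lem3}; the remainder is bookkeeping along the majorization chain, whose sole delicate point—that the intermediate digraphs still lie in $\mathcal R_m^k$—is handled by the shift argument above.
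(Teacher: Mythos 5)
Your proof is correct and follows essentially the same route as the paper: reduce via Lemma~\ref{lem:major} to a single covering step $\mathbf q=\mathbf p+\mathbf e_i-\mathbf e_j$ and apply the exchange inequality of Lemma~\ref{lem3} with $G$ the coalescence of the remaining $k-2$ cycles. The only difference is your shift by $2\mathbf e$ to keep intermediate vectors in $S_m^k$ — a legitimate extra precaution the paper omits (it also follows from the intermediate vectors being sandwiched between $\mathbf p$ and $\mathbf q$ in the majorization order).
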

	\begin{proof}
		By Lemma \ref{lem:major}, it is sufficient to show that $\rho_{\alpha}(\Rose(\mathbf{p}))<\rho_{\alpha}(\Rose(\mathbf{q}))$ when $\mathbf{q}$ covers  $\mathbf{p}$.
		Without loss of generality, let $\mathbf{q}=\mathbf{p}+\mathbf{e}_i-\mathbf{e}_j$ with $1\leq i<j\leq k$.
		Take $H=G_v(p_i,p_j)$  and $H'=G_v(q_i,q_j)=G_v(p_i+1,p_j-1)$, where $G=\Rose(\mathbf{\gamma})$, $\mathbf{\gamma}$ obtained from $\mathbf{p}$  by deleting the $i$-th and $j$-th entries of $\mathbf{p}$ and $v$ is the central vertex of $G$.
		By Lemma \ref{lem3}, we have $\rho_{\alpha}(H) < \rho_\alpha(H')$. Since $H'=\Rose(\mathbf{q})$ and $H=\Rose(\mathbf{p})$,
		then $\rho_{\alpha}(\Rose(\mathbf{p}))<\rho_{\alpha}(\Rose(\mathbf{q}))$ holds.
	\end{proof}

	Using the above theorem,  we immediately get the following result,  which is Theorem 3.2 in \cite{MR3872980}.
	\begin{cor}\label{thm4}
		$\infty(2, m-2)$  and $\infty(\lfloor{\frac{m}{2}}\rfloor,  \lceil{\frac{m}{2}}\rceil)$ are  the unique digraphs with the maximal  and  minimal $\alpha$-spectral radius, respectively, among   $\mathcal{R}_m^2$.
	\end{cor}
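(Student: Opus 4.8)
The plan is to derive this directly from Theorem \ref{thm1} by specializing to the case $k=2$. Recall that $\mathcal{R}_m^2=\{\Rose(\mathbf{p})\mid \mathbf{p}\in S_m^2\}$ and that for $\mathbf{p}=(p_1,p_2)\in S_m^2$ the digraph $\Rose(\mathbf{p})$ is exactly the $\infty$-digraph $\infty(p_1,p_2)$. So it suffices to identify the maximal element $\gamma_1=\mathbf{x}^*(S_m^2)$ and the minimal element $\gamma_2=\mathbf{x}_*(S_m^2)$ of $S_m^2$ with respect to the majorization order, and then invoke Theorem \ref{thm1}.

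First I would compute $\gamma_1$. By the explicit formula quoted in Section 2 (with $k=2$), $\mathbf{x}^*(S_m^2)=(m-2k)\mathbf{e}_1+2\mathbf{e}=(m-4,0)+(2,2)=(m-2,2)$, so $\Rose(\gamma_1)=\infty(m-2,2)=\infty(2,m-2)$. Next I would compute $\gamma_2$: writing $m=2q+r$ with $0\le r<2$, the formula gives $\mathbf{x}_*(S_m^2)=q\mathbf{e}+\mathbf{f}$. If $m$ is even then $q=m/2$, $r=0$ and $\gamma_2=(m/2,m/2)$; if $m$ is odd then $q=(m-1)/2$, $r=1$ and $\gamma_2=((m+1)/2,(m-1)/2)$. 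In both cases $\gamma_2=(\lceil m/2\rceil,\lfloor m/2\rfloor)$, hence $\Rose(\gamma_2)=\infty(\lfloor m/2\rfloor,\lceil m/2\rceil)$.

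Finally, by Theorem \ref{thm1}, among $\mathcal{R}_m^2$ the digraphs $\Rose(\gamma_1)$ and $\Rose(\gamma_2)$ are the unique ones attaining the maximal and minimal $\alpha$-spectral radius, and by the computation above these are precisely $\infty(2,m-2)$ and $\infty(\lfloor m/2\rfloor,\lceil m/2\rceil)$. The only point that warrants a word of care is that this presupposes $m\ge 4$, so that $S_m^2\ne\emptyset$ and the digraphs are well-defined; for $m=4$ the class reduces to the single digraph $\infty(2,2)$ and the statement is trivial, while for $m\ge 5$ the majorization chain in Theorem \ref{thm1} is genuinely nontrivial. There is no real obstacle in this corollary — all the substance lies in Theorem \ref{thm1} (hence in Lemma \ref{lem3}), and what remains is just the bookkeeping of writing down the extremal partitions of $S_m^2$.
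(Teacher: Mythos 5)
Your proposal is correct and matches the paper exactly: the corollary is stated there as an immediate consequence of Theorem \ref{thm1}, obtained by specializing the explicit formulas for $\mathbf{x}^*(S_m^k)$ and $\mathbf{x}_*(S_m^k)$ to $k=2$, which is precisely your computation. The remark about needing $m\ge 4$ for $S_m^2\neq\emptyset$ is a reasonable (if implicit in the paper) bit of care.
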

	
\begin{figure}
		\begin{adjustbox}{center}
			\includegraphics[page=3,  trim=0 0 0 5,  clip, width=0.8\textwidth]{figure}
		\end{adjustbox}
		\caption{\texorpdfstring{$\displaystyle\Inf_{u, v}(p, q, s)$}{Inf(p, q, s)}.}\label{fig:infty}
	\end{figure}

	Let $u, v$ be two distinct vertices of dicircle $\overrightarrow{C}_{q}$.
	Let $\displaystyle\Inf_{u, v}(p, q, s)$ be the digraph (as shown in Figure \ref{fig:infty})
	obtained from $\overrightarrow{C}_{q}$  by identifying vertex $u$ with a vertex of dicircle $\overrightarrow{C}_{p}$
	and identifying vertex $v$ with a vertex of dicircle $\overrightarrow{C}_{s}$ with $2 \leq p \leq s$.
	The digraph $\displaystyle\Inf_{u, v}(p, q, s)$ is called tri-ring digraph, which also is rewritten as $\Inf(p, q, s)$.
	Let $\displaystyle \INF(m)$  denote the set consisting of all tri-ring digraphs of size $m$.
	
	In \cite{MR3777498}, the digraph $\displaystyle\Inf_{u, v}(p, q, s)$ is denoted by $\widetilde{\infty}_2(p-1, q-2, s-1)$.
	In that paper,  Li et al. showed that the digraph $\Inf(2, 2, m-4)$ is the unique digraph with  the maximal
	signless Laplacian spectral radius among $\INF(m)$.
	
	In the following part of this section,  we will show that  $\Inf(2, 2, m-4)$ also is the digraph with the
	maximal $\alpha$-spectral radius among $\INF(m)$.

	From Lemma \ref{thmd1},  we can easily get the following lemma.
	\begin{lem}\label{lem:b6}Let $f_1, f_2, f_3$ be functions defined in Lemma \ref{thmd1}.
		If $\psi_\alpha(G,u)=\psi_\alpha(G,v)$, $\alpha f_3-f_1<0$ and $s\geq t\geq 2$,  we have $$\rho_{\alpha}(G_{u, v}(s, t))<\rho_{\alpha}(G_{u, v}(s+1, t-1)).$$
	\end{lem}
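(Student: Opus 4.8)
The plan is to apply the formula of Lemma~\ref{thmd1} to the two digraphs $G_{u,v}(s,t)$ and $G_{u,v}(s+1,t-1)$ and to subtract. Put $a=x-\alpha$, $b=1-\alpha$, and note that $\psi_\alpha(G,u)=\psi_\alpha(G,v)$ means $f_1=f_2$. In the formula for $\phi_\alpha(G_{u,v}(s,t))$ the term $a^{s+t-2}\big(\alpha^2f_3-\alpha(f_1+f_2)+f_0\big)$ and the term $b^{s+t}f_3$ depend only on the sum $s+t$, hence they are unchanged for $G_{u,v}(s+1,t-1)$ and cancel in the difference. Collecting the two remaining terms of each polynomial I expect
\[
\phi_\alpha\big(G_{u,v}(s+1,t-1)\big)-\phi_\alpha\big(G_{u,v}(s,t)\big)=(\alpha f_3-f_1)\big(a^{t-2}b^{s+1}+a^{s}b^{t-1}-a^{t-1}b^{s}-a^{s-1}b^{t}\big).
\]

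Next I would factor the bracket. Pairing the terms gives $a^{s}b^{t-1}-a^{s-1}b^{t}=a^{s-1}b^{t-1}(a-b)$ and $a^{t-1}b^{s}-a^{t-2}b^{s+1}=a^{t-2}b^{s}(a-b)$, so the bracket equals $(a-b)\big(a^{s-1}b^{t-1}-a^{t-2}b^{s}\big)$; since $s\ge t\ge 2$ the exponents $t-2,t-1$ are admissible and may be pulled out:
\[
a^{t-2}b^{s+1}+a^{s}b^{t-1}-a^{t-1}b^{s}-a^{s-1}b^{t}=(a-b)\,a^{t-2}b^{t-1}\big(a^{\,s-t+1}-b^{\,s-t+1}\big).
\]
For $x>1$ we have $a-b=x-1>0$, $a>b>0$ and $s-t+1\ge 1$, so this quantity is strictly positive. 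Combined with the hypothesis $\alpha f_3-f_1<0$, this shows that on $(1,\infty)$ the characteristic polynomial of $G_{u,v}(s+1,t-1)$ lies strictly below that of $G_{u,v}(s,t)$.

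Finally I would convert this into the spectral-radius inequality. Set $x_0=\rho_\alpha\big(G_{u,v}(s,t)\big)$. Since $G_{u,v}(s,t)$ is strongly connected, contains an inner vertex of an attached cycle (so its minimum out-degree is $1$, using $s,t\ge 2$) and contains the vertex $u$ of out-degree at least $2$, its out-degree vector is non-constant, hence $x_0>1$ by Lemma~\ref{lemc2}(iii). Evaluating the identity above at $x=x_0$, where $\phi_\alpha(G_{u,v}(s,t))$ vanishes, yields that $\phi_\alpha\big(G_{u,v}(s+1,t-1)\big)$ is negative at $x_0$. As the characteristic polynomial of the irreducible nonnegative matrix $A_\alpha\big(G_{u,v}(s+1,t-1)\big)$ is nonnegative on $\big[\rho_\alpha(G_{u,v}(s+1,t-1)),\infty\big)$, a negative value at $x_0$ forces $x_0<\rho_\alpha\big(G_{u,v}(s+1,t-1)\big)$, which is exactly the claim. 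I expect the only points requiring care to be the exponent bookkeeping in the factorisation — which uses $s\ge t\ge 2$ in an essential way — and the fact that the hypothesis $\alpha f_3-f_1<0$ must be invoked precisely at $x=x_0$, where one also needs $x_0>1$; beyond that there is no substantive obstacle.
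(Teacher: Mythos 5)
Your proposal is correct and follows essentially the same route as the paper: subtract the two instances of the formula from Lemma~\ref{thmd1}, note that the terms depending only on $s+t$ cancel because $f_1=f_2$, and factor the remaining bracket (your $(a-b)a^{t-2}b^{t-1}(a^{s-t+1}-b^{s-t+1})$ is exactly the paper's $(x-\alpha)^{s+t-1}(d^{t-1}-d^{s})(1-d)$). Your final paragraph just makes explicit the standard polynomial-comparison step that the paper leaves implicit.
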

	
	\begin{proof}
		When $\psi_\alpha(G,u)=\psi_\alpha(G,v)$,  we have $f_1=f_2$ and
		\begin{align*}
		& \phi_{\alpha}(G_{u, v}(s+1, t-1))-\phi_{\alpha}(G_{u, v}(s, t))                                                 \\
		= & (\alpha f_3- f_1)\big( (x-\alpha)^{t -2}(1-\alpha)^{s+ 1}+(x -\alpha)^{s}(1-\alpha)^{t-1}                            \\
		- & (x-\alpha)^{t-1}(1-\alpha)^{s}-(x-\alpha)^{s-1}(1-\alpha)^{t}\big)                                                \\
		= & (\alpha f_3- f_1)(x-\alpha)^{s+t-1}(d^{t-1}-d^{s}) (1-d).
		\end{align*}			

		For all $x>1$, since $s \geq t \geq 2$ and $\alpha f_3-f_1<0$,  we have $1-d>0, d^{t-1}>d^{s}$ and 
		$$\phi_{\alpha}(G_{u, v}(s+1, t-1))-\phi_{\alpha}(G_{u, v}(s, t))<0,$$
		which implies $\rho_{\alpha}(G_{u, v}(s, t))<\rho_{\alpha}(G_{u, v}(s+1, t-1)).$
	\end{proof}

	By Lemma \ref{lem:b6},  we can easily get the following lemma.
	\begin{lem}\label{lem4}
		If $2\le p\le s$,  then $\rho_{\alpha}(\displaystyle\Inf_{u, v}(p, q, s))<\rho_{\alpha}(\displaystyle\Inf_{u, v}(p-1, q, s+1))$.
	\end{lem}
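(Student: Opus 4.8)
The plan is to recognize $\Inf_{u,v}(p,q,s)$ as a coalescence of exactly the shape handled by Lemma \ref{lem:b6} and then simply read off the conclusion. By definition $\Inf_{u,v}(p,q,s)$ is obtained from the dicycle $\overrightarrow{C}_q$ by attaching a dicycle of length $p$ at $u$ and one of length $s$ at $v$; so, in the notation $G_{u,v}(\cdot,\cdot)$ of Lemma \ref{thmd1} with $G=\overrightarrow{C}_q$, it is $(\overrightarrow{C}_q)_{u,v}(p,s)$. Since $p\le s$, I would rename the two marked vertices so that the longer cycle sits on the first one, that is, write $\Inf_{u,v}(p,q,s)=(\overrightarrow{C}_q)_{v,u}(s,p)$ and correspondingly $\Inf_{u,v}(p-1,q,s+1)=(\overrightarrow{C}_q)_{v,u}(s+1,p-1)$. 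The claim then becomes verbatim the inequality produced by Lemma \ref{lem:b6}, applied with $G=\overrightarrow{C}_q$, parameter $s$ and $t=p$, so the whole proof reduces to checking that lemma's two hypotheses for $G=\overrightarrow{C}_q$: that $\psi_\alpha(\overrightarrow{C}_q,v)=\psi_\alpha(\overrightarrow{C}_q,u)$, and that $\alpha f_3-f_1<0$ on the range where it is used, where $f_1,f_2,f_3$ are as in Lemma \ref{thmd1} for the pair $(v,u)$.

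Both checks come down to elementary characteristic polynomials of directed paths. Deleting a single vertex from $\overrightarrow{C}_q$ leaves a directed path on $q-1$ vertices, and the corresponding principal submatrix of $A_\alpha(\overrightarrow{C}_q)$ equals $\alpha I+(1-\alpha)N$ with $N$ strictly triangular (nilpotent); hence $f_1=f_2=\psi_\alpha(\overrightarrow{C}_q,v)=\psi_\alpha(\overrightarrow{C}_q,u)=(x-\alpha)^{q-1}$, which is the first hypothesis. Deleting both $u$ and $v$ leaves two directed paths with $q-2$ vertices in all, so $f_3=(x-\alpha)^{q-2}$, and therefore $\alpha f_3-f_1=(x-\alpha)^{q-2}(2\alpha-x)$.

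For the sign condition I would observe that it suffices to have $\alpha f_3-f_1<0$ at $x=\rho_\alpha(\Inf_{u,v}(p,q,s))$ (this is how the reasoning of Lemma \ref{lem:b6} is used: one evaluates $\phi_\alpha$ of the modified digraph there and finds it negative). Now the vertex $u$ has outdegree $2$ in $\Inf_{u,v}(p,q,s)$, so $[\,2\alpha\,]$ is a $1\times1$ principal submatrix of the irreducible matrix $A_\alpha(\Inf_{u,v}(p,q,s))$; by Lemma \ref{lemc2}(ii) this forces $\rho_\alpha(\Inf_{u,v}(p,q,s))>2\alpha\ge\alpha$, whence $(x-\alpha)^{q-2}>0$ and $2\alpha-x<0$ at $x=\rho_\alpha(\Inf_{u,v}(p,q,s))$, so $\alpha f_3-f_1<0$ there. (If one restricts to $0\le\alpha\le\tfrac12$, this is immediate from $x>1\ge2\alpha$ and Lemma \ref{lem:b6} applies without modification.) Since $s\ge p\ge2$ the remaining hypothesis $s\ge t\ge2$ of Lemma \ref{lem:b6} also holds, and the lemma yields $\rho_\alpha(\Inf_{u,v}(p,q,s))<\rho_\alpha(\Inf_{u,v}(p-1,q,s+1))$.

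I expect the only mild obstacle to be the sign of $\alpha f_3-f_1$ when $\alpha>\tfrac12$ — handled above by noting that a diagonal entry of $A_\alpha$ equals $2\alpha$ while the matrix is irreducible, so $\rho_\alpha>2\alpha$ — together with two pieces of harmless bookkeeping: swapping the roles of $u$ and $v$, which is legitimate because $\overrightarrow{C}_q$ is vertex-transitive (so $\psi_\alpha(\overrightarrow{C}_q,u)=\psi_\alpha(\overrightarrow{C}_q,v)$), and the reading of a ``dicycle of length $1$'' in the degenerate case $p=2$, where $\Inf_{u,v}(1,q,s+1)$ is interpreted as $(\overrightarrow{C}_q)_v(s+1)$ and the formula of Lemma \ref{thmd1} still applies; neither is a real difficulty.
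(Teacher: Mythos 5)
Your proof is correct and follows essentially the same route as the paper's: both reduce the claim to Lemma \ref{lem:b6} with $G=\overrightarrow{C}_{q}$ and verify its two hypotheses by computing $f_1=f_2=(x-\alpha)^{q-1}$ and $f_3=(x-\alpha)^{q-2}$, so that $\alpha f_3-f_1=-(x-\alpha)^{q-2}(x-2\alpha)$. Your extra observation that $\rho_\alpha>2\alpha$ (from the $1\times 1$ principal submatrix $[2\alpha]$ at the outdegree-$2$ vertex together with Lemma \ref{lemc2}(ii)) in fact closes a small gap, since the paper asserts $x-2\alpha>0$ without justification when $\alpha>\tfrac12$.
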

	\begin{proof} Take $G_{u, v}(s, t)$, $f_1,f_2,f_3$ be defined in Lemma \ref{thmd1}  with $s\ge p\ge 2$.
		When $G=\overrightarrow{C_q}$,  by direct calculation,  we have $\psi_\alpha(G,u)=\psi_\alpha(G,v)$ and
		\begin{align*}
		\alpha f_3-f_1 & =\alpha (x-\alpha)^{n-2}-(x-\alpha)^{n-1} \\
		& =-(x-\alpha)^{n -2}(x-2\alpha)<0.
		\end{align*}
		Then $\rho_{\alpha}(\displaystyle\Inf_{u, v}(p, q, s))<\rho_{\alpha}(\displaystyle\Inf_{u, v}(p-1, q, s+1))$ follows from Lemma \ref{lem:b6}.
	\end{proof}

	Let $C_p\cdot C_q\cdot G$ be the digraph obtained from digraphs $G,  \overrightarrow{C_p}$ and $\overrightarrow{C_q}$ by identifying a vertex of $\overrightarrow{C}_{p}$ with a vertex of $\overrightarrow{C}_{q}$ and identifying a vertex $u$ of $G$ with another vertex of $\overrightarrow{C}_{q}$.
	By Lemma \ref{lemd4},  we have all the digraphs $C_p\cdot C_q\cdot G$ have the same characteristic polynomials,  when identifying the vertex $u$ of digraphs $G$ and the distinct internal vertices in the same dicircle.

	\begin{thm}\label{thmd3}
		Let $H_1=C_p\cdot C_q\cdot G$ and $H_2=C_q\cdot C_p\cdot G$ be the digraphs defined above with $p> q\ge 2$. We have $$\rho_{\alpha}(H_1)>\rho_{\alpha}(H_2).$$
	\end{thm}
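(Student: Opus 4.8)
The plan is to compare the two characteristic polynomials directly and then read off the spectral radii from a sign. First I would build $H_1$ in two stages: attach $\overrightarrow{C}_q$ at $u$ to form $G':=G_u(\overrightarrow{C}_q)$, then attach $\overrightarrow{C}_p$ at a vertex $w$ of the newly created cycle, so that $H_1=(G')_w(\overrightarrow{C}_p)$, and apply Corollary \ref{cor:b1} (with $k=1$) twice. The one ingredient not yet available in the paper is the auxiliary identity
\[
\psi_\alpha\bigl(G_u(\overrightarrow{C}_q),\,w\bigr)=(x-\alpha)^{q-2}\bigl(\phi_\alpha(G)-\alpha\,\psi_\alpha(G,u)\bigr)
\]
valid for every cycle vertex $w\neq u$. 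I would prove it by deleting $w$: this breaks the attached cycle into two dangling directed paths at $u$ carrying $q-2$ vertices of out-degree $1$, and peeling those vertices off one at a time from their free ends yields a factor $(x-\alpha)$ at each step (the relevant row, respectively column, of $xI-A_\alpha$ has collapsed to $(x-\alpha)\mathbf e^{T}$), leaving the matrix $A_\alpha(G)$ with only the diagonal entry at $u$ increased by $\alpha$; its determinant is $\phi_\alpha(G)-\alpha\,\psi_\alpha(G,u)$ by the matrix determinant lemma. (By Lemma \ref{lemd4} this is independent of the choice of $w$, as already observed in the text.)

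Feeding this together with $\phi_\alpha(\overrightarrow{C}_q)=(x-\alpha)^q-(1-\alpha)^q$ and $\psi_\alpha(\overrightarrow{C}_q,\cdot)=(x-\alpha)^{q-1}$ into Corollary \ref{cor:b1} twice, and writing $\widehat g:=\phi_\alpha(G)-\alpha\,\psi_\alpha(G,u)$ and $g_u:=\psi_\alpha(G,u)$, I expect the closed form
\[
\phi_\alpha(H_1)=(x-\alpha)^{p+q-3}(x-2\alpha)\,\widehat g-(x-\alpha)^{q-2}(1-\alpha)^p\,\widehat g-(x-\alpha)^{p-1}(1-\alpha)^q\,g_u,
\]
with $\phi_\alpha(H_2)$ given by the same expression after swapping $p\leftrightarrow q$ (since $H_2$ is the same construction with the roles of the middle cycle and the pendant cycle interchanged). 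Subtracting, the leading term $(x-\alpha)^{p+q-3}(x-2\alpha)\widehat g$ cancels, and using $(x-\alpha)^{p-1}(1-\alpha)^q-(x-\alpha)^{q-1}(1-\alpha)^p=(x-\alpha)\bigl((x-\alpha)^{p-2}(1-\alpha)^q-(x-\alpha)^{q-2}(1-\alpha)^p\bigr)$ together with $\widehat g-(x-\alpha)g_u=\phi_\alpha(G)-x\,\psi_\alpha(G,u)$, the difference factors cleanly as
\[
\phi_\alpha(H_1)-\phi_\alpha(H_2)=(1-\alpha)^q(x-\alpha)^{q-2}\bigl((x-\alpha)^{p-q}-(1-\alpha)^{p-q}\bigr)\bigl(\phi_\alpha(G)-x\,\psi_\alpha(G,u)\bigr).
\]

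It remains to determine the sign of the right-hand side at $x=\rho_\alpha(H_2)$. Since $H_2$ is strongly connected and has no constant out-degree, Lemma \ref{lemc2}(iii) gives $\rho_\alpha(H_2)>\delta^+(H_2)\ge 1>\alpha$, so the first three factors are strictly positive (the hypothesis $p>q$ enters in the third). For the last factor, applying Lemma \ref{lemd3} at $u$ gives $\phi_\alpha(G)-x\,\psi_\alpha(G,u)=-\alpha d_u^{+}\psi_\alpha(G,u)-\sum_{\overrightarrow{C}\in\mathcal C_u(G)}(1-\alpha)^{|V(\overrightarrow{C})|}\psi_\alpha(G,V(\overrightarrow{C}))$; each $\psi$ here is the characteristic polynomial of a proper principal submatrix of the irreducible matrix $A_\alpha(G)$, hence positive for $x\ge\rho_\alpha(G)$ by Lemma \ref{lemc2}(ii), and $\mathcal C_u(G)\neq\emptyset$, so $\phi_\alpha(G)-x\,\psi_\alpha(G,u)<0$ for all $x\ge\rho_\alpha(G)$. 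Because $A_\alpha(G)$ is dominated entrywise by the principal submatrix of $A_\alpha(H_2)$ indexed by $V(G)$ and $A_\alpha(H_2)$ is irreducible, Lemma \ref{lemc2}(ii) also yields $\rho_\alpha(G)<\rho_\alpha(H_2)$; hence the last factor is negative at $x=\rho_\alpha(H_2)$. Therefore $\phi_\alpha(H_1)(\rho_\alpha(H_2))=\bigl(\phi_\alpha(H_1)-\phi_\alpha(H_2)\bigr)(\rho_\alpha(H_2))<0$, and since $\phi_\alpha(H_1)$ is monic with largest real root $\rho_\alpha(H_1)$, we conclude $\rho_\alpha(H_1)>\rho_\alpha(H_2)$. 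The main obstacle I anticipate is the bookkeeping in the two-stage use of Corollary \ref{cor:b1} and in establishing the auxiliary $\psi_\alpha$-identity cleanly; once the factored difference above is in hand the sign analysis is routine. (Throughout, $G$ is taken to be strongly connected with at least one directed cycle through $u$, which is the setting in which these tri-ring digraphs arise.)
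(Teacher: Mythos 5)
Your argument is correct, and it arrives at exactly the same factored difference
\[
\phi_\alpha(H_1)-\phi_\alpha(H_2)=\bigl((x-\alpha)^{p-2}(1-\alpha)^q-(x-\alpha)^{q-2}(1-\alpha)^p\bigr)\bigl(\phi_\alpha(G)-x\,\psi_\alpha(G,u)\bigr)
\]
as the paper, so the overall architecture (compare characteristic polynomials, factor, read off signs at $x=\rho_\alpha(H_2)$) is the same. The differences are organizational but real. The paper coalesces the two cycles into $H=\overrightarrow{C}_p w_1{:}v_1\overrightarrow{C}_q$ first and applies Corollary \ref{cord2} once, so the only $\psi$-computations needed are $\psi_\alpha(H,v)$ and $\psi_\alpha(H,w)$ inside the bicyclic digraph $H$, which are immediate; this avoids your auxiliary identity $\psi_\alpha(G_u(\overrightarrow{C}_q),w)=(x-\alpha)^{q-2}(\phi_\alpha(G)-\alpha\psi_\alpha(G,u))$ entirely. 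Your two-stage use of Corollary \ref{cor:b1} works and the peeling/matrix-determinant-lemma proof of that identity is sound (it is also consistent with Lemma \ref{lemd4}), but it is the extra bookkeeping you yourself flagged. For the key negativity $\phi_\alpha(G)-x\psi_\alpha(G,u)<0$ on $[\rho_\alpha(G),\infty)$, the paper invokes Lemma \ref{lemz2} (comparing $A_\alpha(G)$ with the matrix obtained by zeroing the row and column of $u$), whereas you expand via the Schwenk-type Lemma \ref{lemd3} and bound each principal-submatrix characteristic polynomial; both are valid and both require, as you correctly note, that $G$ have at least one directed cycle through $u$ (equivalently $|V(G)|\ge 2$), an implicit hypothesis in the paper as well. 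The concluding step $\min(\rho_\alpha(H_1),\rho_\alpha(H_2))>\rho_\alpha(G)$ and the monic-polynomial sign argument match the paper's.
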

	\begin{proof}
		Without loss of generality,  let $w, w_1$ be two distinct vertices of dicircle $\overrightarrow{C}_{p}$ and $v, v_1$ be two distinct vertices of dicircle $\overrightarrow{C}_{q}$.  Take $H=\overrightarrow{C}_{p}w_1:v_1\overrightarrow{C}_{q}$.
		By Corollary \ref{cord2},  we have
		\begin{align*}
		\phi_{\alpha}(H_1)
		= & \psi_\alpha(H,v)(\phi_{\alpha}(G)-x\psi_\alpha(G,u))+\phi_{\alpha}(H)\psi_\alpha(G,u), \\
		\phi_{\alpha}(H_2)
		= & \psi_\alpha(H,w)(\phi_{\alpha}(G)-x\psi_\alpha(G,u))+\phi_{\alpha}(H)\psi_\alpha(G,u)
		\end{align*}
		and
		\begin{align*}
		\phi_\alpha(H)=& (x-\alpha )^{p+q-2}(x-2\alpha -(1-\alpha)(d^{p-1}+d^{q-1}))\\
		\psi_\alpha(H,v)
		=                          & 2(x-\alpha)^{p+q-2}-(x-\alpha)^{q-2}(1-\alpha)^{p}-x(x-\alpha)^{p+q-3}, \\
		\psi_\alpha(H,w)= & 2(x-\alpha)^{p+q-2}-(x-\alpha)^{p-2}(1-\alpha)^{q}-x(x-\alpha)^{p+q-3}.
		\end{align*}
		Then we have
		\begin{align*}
		& \phi_{\alpha}(H_2,x)-\phi_{\alpha}(H_1,x)                                                                     \\
		= & (\psi_\alpha(H,w)-\psi_\alpha(H,v))(\phi_{\alpha}(G)-x\psi_\alpha(G,u))\:          \\
		= & ((x-\alpha)^{q-2}(1-\alpha)^{p}-(x-\alpha)^{p-2}(1-\alpha)^{q})(\phi_{\alpha}(G)-x\psi_\alpha(G,u))\\
		= & (x-\alpha)^{p+q-2}(d^{p}-d^{q})(\phi_{\alpha}(G)-x\psi_\alpha(G,u)).
		\end{align*}
		By Lemma \ref{lemz2}, we have
		$\phi_{\alpha}(G)-x\psi_\alpha(G,u)<0$ for all $x\ge \rho_{\alpha}(G)$.  
		For $x>1$, $d^p-d^q<0$ follows from $p>q$. So $\phi_{\alpha}(H_2,x)-\phi_{\alpha}(H_1,x)>0$ holds when $x\ge \rho_{\alpha}(G)$. From (ii) of Lemma \ref{lemc2}, $\min(\rho_\alpha(H_1),\rho_\alpha(H_2))>\rho_\alpha(G)$. Then $\rho_{\alpha}(H_1)>\rho_{\alpha}(H_2)$ follows.
	\end{proof}

	By Theorem \ref{thmd3} and  Lemma \ref{lem4},  we have the following theorem.
	
	\begin{thm}\label{thmd4}
		$\Inf(2, 2, m-4)$ is the unique digraphs with  the maximal  $\alpha$-spectral radius among  $\displaystyle \INF(m)$ for $m \geq 6$.
	\end{thm}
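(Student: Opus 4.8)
The plan is to combine the two reduction lemmas already proved, Lemma \ref{lem4} and Theorem \ref{thmd3}, to push any tri-ring digraph $\Inf(p,q,s)$ toward $\Inf(2,2,m-4)$ while strictly increasing the $\alpha$-spectral radius at each step. Recall that a digraph in $\INF(m)$ is determined by a triple $(p,q,s)$ with $p+q+s = m+2$ (two vertices are identified), $2\le p\le s$, $q\ge 2$, and $\Inf(p,q,s)$ is the coalescence-type digraph obtained from $\overrightarrow{C}_q$ by attaching $\overrightarrow{C}_p$ at one vertex $u$ and $\overrightarrow{C}_s$ at another vertex $v$ of $\overrightarrow{C}_q$.

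First I would use Lemma \ref{lem4} to shrink the ``middle'' cycle. Lemma \ref{lem4} says $\rho_\alpha(\Inf_{u,v}(p,q,s)) < \rho_\alpha(\Inf_{u,v}(p-1,q,s+1))$ whenever $2\le p\le s$; applying it repeatedly decreases $p$ one unit at a time, each step strictly increasing $\rho_\alpha$, until $p=2$. Wait — that lemma moves mass between the $p$-cycle and the $s$-cycle while fixing $q$; to reduce $q$ I need a separate argument, so let me instead structure the reduction as follows. After possibly relabelling so that the three cycle lengths are sorted, I want to reach the state where two of the three cycles have length exactly $2$. Note that $\Inf(p,q,s)$ with $q\ge 3$ can be viewed, via the identification, as a digraph of the form $C_q\cdot C_{p}\cdot G$ or $C_q \cdot C_s \cdot G$ in the notation preceding Theorem \ref{thmd3}: indeed, after Lemma \ref{lemd4} the position of the attaching vertex of $\overrightarrow{C}_s$ along $\overrightarrow{C}_q$ is irrelevant, so $\Inf(p,q,s)$ equals $C_p\cdot C_q\cdot \overrightarrow{C}_s$ (identify a vertex of $\overrightarrow{C}_p$ with a vertex of $\overrightarrow{C}_q$, then identify another vertex of $\overrightarrow{C}_q$ with a vertex of $G=\overrightarrow{C}_s$). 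Theorem \ref{thmd3} then gives $\rho_\alpha(C_p\cdot C_q\cdot \overrightarrow{C}_s) > \rho_\alpha(C_q\cdot C_p\cdot \overrightarrow{C}_s)$ whenever $p>q$, i.e. we may always arrange that the cycle glued ``outermost'' to $G$ is the longer of the first two. Iterating these two operations, any $\Inf(p,q,s)$ with $(p,q)\ne(2,2)$ is strictly dominated, so after finitely many strict increases we reach a digraph with $p=q=2$, which is exactly $\Inf(2,2,m-4)$ (the third cycle then has size $m-4$, and $m\ge 6$ guarantees $m-4\ge 2$ so that this digraph is well-defined and distinct from the others).

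The key steps, in order: (1) record that $|E(\Inf(p,q,s))| = m$ forces $p+q+s=m+2$, so the family is finite and parametrized by the triple; (2) invoke Lemma \ref{lemd4} to show that $\Inf(p,q,s)$ admits the description $C_p\cdot C_q\cdot \overrightarrow{C}_s$ independently of where the $s$-cycle is attached, making Theorem \ref{thmd3} applicable with $G=\overrightarrow{C}_s$; (3) set up a strict-monotonicity argument: define a well-founded measure on triples (e.g. $(\min(p,q),\ \text{something})$ or simply run induction on $q$ and on $\min(p,s)$) and show that unless $(p,q)=(2,2)$ one of Lemma \ref{lem4} or Theorem \ref{thmd3} applies to produce a tri-ring digraph of the same size with strictly larger $\rho_\alpha$; (4) conclude that the unique maximizer is $\Inf(2,2,m-4)$, and since every step is strict, uniqueness is automatic.

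The main obstacle is step (3): making the bookkeeping clean so that the two lemmas genuinely cover every non-extremal triple and the process terminates. Lemma \ref{lem4} only rebalances $p$ and $s$ with $q$ fixed, and Theorem \ref{thmd3} only swaps the roles of two of the three cycles — neither by itself decreases $q$. The trick is that Theorem \ref{thmd3} lets me reinterpret which cycle plays the role of the fixed ``middle'' cycle $\overrightarrow{C}_q$ in $\Inf$: if $q>2$ but some other cycle has length $2$, I can swap so that the length-$2$ cycle becomes the middle one, and then Lemma \ref{lem4} (now with ``$q$'' equal to $2$) reduces the remaining two lengths. If all three lengths exceed $2$, a single application of Lemma \ref{lem4} already brings one of them down to $2$; then I re-sort via Theorem \ref{thmd3}. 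I would organize this as an explicit two-case induction on, say, $p + q$ (or on the number of cycles of length $>2$), checking that in each case a strict increase is available, which forces the global maximum to sit at $(2,2,m-4)$.
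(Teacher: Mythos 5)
Your proposal is correct and is essentially the paper's own argument: the paper likewise combines Lemma \ref{lem4} (to force the shorter outer cycle down to length $2$) with Theorem \ref{thmd3} (to force the middle cycle to be no longer than the outer ones, hence also $2$), only phrased as a direct extremality argument on a presumed maximizer rather than as your iterative improvement scheme, which makes the bookkeeping in your step (3) unnecessary. One small slip: identification merges vertices, not arcs, so $|E(\Inf(p,q,s))|=p+q+s=m$ (the order is $m-2$), not $p+q+s=m+2$; your final count $s=m-4$ is nonetheless consistent with the correct relation.
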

	\begin{proof}
		Suppose that $G=\Inf(p,q,s)$ ($2 \leq p \leq s$) is the digraph with  maximal $\alpha$-spectral radius among  $\displaystyle \INF(m)$. By  Lemma \ref{lem4},  we have $p=2$.  From Theorem \ref{thmd3}, we have $p \geq q$. 
		Sine $p \geq 2$, we have that  $G=\Inf(2, 2, m-4)$ is the unique digraph with  maximal $\alpha$-spectral radius among  $\displaystyle \INF(m)$. 
	\end{proof}
	
	Using the above theorem,  when $\alpha=\frac{1}{2}$,  we can immediately solve the Problem $2.8$ about  maximal spectral radius in \cite{MR3777498}.
	The digraph $\Inf(2, 2, m-4)$ is the unique digraph with  the maximal
	signless Laplacian spectral radius among $\displaystyle \INF(m)$.
	
	\begin{remark}
		The minimal spectral radius part of Problem $2.8$ in  \cite{MR3777498} is very hard to solve. By numerical computation, one can see that  it is difficult to determine the digraph with minimal signless Laplacian spectral radius among $\displaystyle \INF(m)$.  It can be seen from Table 1 that the pattern of digraphs with the minimal signless Laplacian spectral radius is related to size $m$. Furthermore,  for fixed size $m$, the pattern of digraphs with the minimal $\alpha$-spectral radius is related to $\alpha$.
	\end{remark}
	In the following table, we list  the digraphs with first four minimal $\alpha$-spectral radius  with given size and $\alpha$. 
	
	\begin{table}[!ht]\label{Tab1}
		\begin{adjustbox}{width=\textwidth,center}
			\tabcolsep=2pt 
			\extrarowsep=1mm
			\begin{tabu} to 1.1\textwidth {|X[c,m] ||
					X[1.1,c,m] |X[0.8,c,m] ||
					X[1.1,c,m] |X[0.8,c,m] ||
					X[1.1,c,m] |X[0.8,c,m] ||
					X[1.1,c,m] |X[0.8,c,m] |}
				\tabucline -
				$\alpha$ | $m$ &		
				$G_1$	&	$\rho_\alpha(G_1)$ 	&
				$G_2$	&	$\rho_\alpha(G_2)$ 	&
				$G_3$	&	$\rho_\alpha(G_3)$ 	&
				$G_4$	&	$\rho_\alpha(G_4)$    \\\tabucline -
				0.5 | 15 &		
				$\Inf$(5,5,5) 	&	1.312 	&
				$\Inf$(6,6,3) 	&	1.316 	&
				$\Inf$(6,5,4) 	&	1.319 	&
				$\Inf$(7,5,3) 	&	1.341    \\\tabucline -
				0.5 | 24 &	
				$\Inf$(9,9,6) 	&	1.208 	&
				$\Inf$(8,8,8) 	&	1.213 	&	
				$\Inf$(9,8,7) 	&	1.215 	&
				$\Inf$(10,9,5) 	&	1.217    \\\tabucline -
				0.2 | 15 &	
				$\Inf$(5,5,5) 	&	1.233   &
				$\Inf$(6,5,4) 	&	1.2381 	&
				$\Inf$(6,6,3) 	&	1.2383 	&
				$\Inf$(7,5,3) 	&	1.255   \\\tabucline -
				0.8 | 15 &		
				$\Inf$(6,6,3) 	&	1.619 	&
				$\Inf$(5,5,5) 	&	1.624 	&
				$\Inf$(6,5,4) 	&	1.625   &	
				$\Inf$(7,5,3) 	&	1.63    \\\tabucline -
			\end{tabu}
		\end{adjustbox}
		\caption{Digraphs with small $\alpha$-spectral radius in $\INF(m)$ for some $\alpha$ and $m$.}
	\end{table}

	\section{The \texorpdfstring{$\alpha$}{α}-spectral radius of digraphs in $\widetilde{\boldsymbol\Theta}_k(m)$}
	
	For integer $t\geq 1$ and $m \geq 2t-1$, take
	\begin{equation*}\widetilde{S}_m^t =\left\{ \mathbf{x}\in 
	\mathbb{N}_{ +}
	^{k}:x_{1}\geq x_{2}\geq \ldots \geq x_{t}\geq 1, \sum_{i = 1}^{t}{x_i} = m \text{ and } x_{t-1}\geq 2\right\}
	\end{equation*}
	and $\displaystyle \widehat{S}^{t}_{m}=\bigcup_{i=2t-1}^{m}\widetilde{S}^t_i$.
	Let $\mathbf{x}^{\ast }\left(\widetilde{S}_m^t \right)=\mathbf{\zeta}^m_t=(m-2t+3, \overbrace{2,\ldots,2}^{t-2},1)$ and $\mathbf{x}_{\ast }\left(\widetilde{S}_m^t \right)=(l_1,l_2,\dots,l_t)$ be the maximal and the  minimal elements of $\widetilde{S}_m^t$ with respect to the majorization order, respectively. Clearly, $l_1-l_t \leq 1$.
	
	Denote $\mathbf{\zeta}_t=(\overbrace{2,\ldots,2}^{t-1},1)$. 
	It is not difficult to check that $\mathbf{\zeta}^m_t$ and $\mathbf{\zeta}_t$ be the maximal and the  minimal elements of $\displaystyle \widehat{S}^{t}_{m}$ with respect to the weakly majorization order, respectively.

	For $m \geq 4, k\geq 3$, take $$Q(m,k)=\{(\mathbf{K}_1,\mathbf{K}_2)| \mathbf{K}_1\in \widetilde{S}^s_{m_1}, \mathbf{K}_2\in \widetilde{S}^t_{m_2},s+t=k, m_1+m_2=m, s \geq t\}.$$ 
	
	For $\mathbf{K}_1=(k_1,k_2,\dots,k_s)\in \widetilde{S}^s_{m_1}$ and $\mathbf{K}_2=(l_1,l_2,\dots,l_t)\in \widetilde{S}^t_{m_2}$, denote by $\widetilde{\Theta}(\mathbf{K}_1,\mathbf{K}_2)$ the simple digraph with $m_1+m_2$ edges obtained from two vertices $u$, $v$ by adding $s$ internally-disjoint directed paths with lengths $k_1,k_2,\ldots, k_s$ from $u$ to $v$ and $t$  internally-disjoint directed paths with lengths $l_1,l_2,\ldots, l_t$ from $v$ to $u  $  (as shown in Figure~\ref{Figtheta}). 
	The simple digraph $\widetilde{\Theta}(\mathbf{K}_1,\mathbf{K}_2)$ is called generalized theta digraph, which also is denoted by $\widetilde{\Theta}(k_1, k_2, \ldots, k_s;l_1, l_2, \ldots, l_t)$. 
	For fixed positive integers $s,t,m_1,m_2$, 
	let $\prescript{s}{t}{\widetilde{\boldsymbol\Theta}_{m_2}^{m_1}}$  denote the set consisting of all digraphs $\widetilde{\Theta}(\mathbf{K}_1,\mathbf{K}_2)$  for $\mathbf{K}_1 \in \widetilde{S}_{m_1}^{s}$ and $\mathbf{K}_2 \in \widetilde{S}_{m_2}^{t}$.
	
	For the sake of brevity, the digraph $\widetilde{\Theta}(\mathbf{K}_1,\mathbf{K}_2)$ is rewritten as  $\widetilde{\Theta}_m(\mathbf{K}_1)$ when the length of $K_2$ equals to one.
	In the sequel, we denote  
	$\prescript{s}{t}{\widetilde{\boldsymbol\Theta}(m)}=\displaystyle\bigcup_{\mathclap{\substack{m_1+m_2=m}}}\prescript{s}{t}{\widetilde{\boldsymbol\Theta}_{m_2}^{m_1}}$, $\widetilde{\boldsymbol\Theta}_k^{1}(m)=\prescript{k-1}{1}{\widetilde{\boldsymbol\Theta}(m)}$  and $\widetilde{\boldsymbol\Theta}_k(m)=\displaystyle\bigcup_{\mathclap{\substack{s+t=k\\m_1+m_2=m}}}\prescript{s}{t}{\widetilde{\boldsymbol\Theta}_{m_2}^{m_1}}$. It is obvious that $\widetilde{\boldsymbol\Theta}_k(m)$ is the set of all generalized theta digraphs of size $m$.

	\begin{figure}
		\begin{adjustbox}{
				height=2.5cm,
				center}
			\includegraphics[page=2,  trim=0 0 0 5,  clip, width=0.8\textwidth]{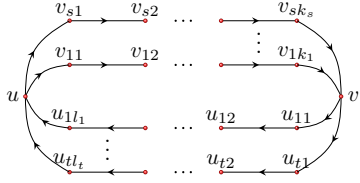}
		\end{adjustbox}
		\caption{ $\widetilde{\Theta}(\mathbf{K}_1, \mathbf{K}_2)$}\label{Figtheta}
	\end{figure}
	
	Let $U,V$ be two vertex sets of digraph $G$.  Write $\Lambda(G,\alpha)=x I_n -A_{\alpha}(G)$ and denote $\Lambda(G)(U|V)$ be the submatrix of $\Lambda(G,\alpha)$ obtained by deleting the rows corresponding to vertices of $U$ and the columns corresponding to vertices of $V$.
	When $U=\{u\}, V=\{v\}$, we write $\Lambda(G)(U|V)$ as $\Lambda(G)(u|v)$, respectively.
	When $U=V$, we write $\Lambda(G)(U|V)$ as $\Lambda(G)(U)$.
	\begin{lem}\label{lemd5}
		Let $G_1,G_2$ be digraphs with order $n_1$ and $n_2$. Suppose that $u_1,v_1$ are vertices of $G_1$ with $d^{-}_{u_1}(G_1)=d^{+}_{v_1}(G_1)=0$ and $u_2,v_2$ are vertices of $G_2$ with $d^{+}_{u_2}(G_2)=d^{-}_{v_2}(G_2)=0$.  Let $H$ be the digraph obtained from $G_1,G_2$ by identifying/merging  vertices $u_1,u_2$ and  $v_1,v_2$ as new vertices $u$ and $v$, respectively.
		Then
		\begin{align*}
		\phi_\alpha(H,x)= & (x-\alpha d_1) (x-\alpha d_2)\det(A_1)  \det(A_2)         \\
		-                 & \det(\Lambda(G_1)(v_1|u_1))  \det(\Lambda(G_2)(u_2|v_2)),
		\end{align*}
		where $A_1=\Lambda(G_1)(\{u_1,v_1\})$, $A_2=\Lambda(G_2)(\{u_2,v_2\})$ and  $d_1=d^{ +}_{u_1}(G_1)$, $d_2=d^{ +}_{v_2}(G_2)$.
	\end{lem}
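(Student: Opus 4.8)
The plan is to choose a convenient ordering of $V(H)$, read off the block structure of $\Lambda(H,\alpha)=xI-A_\alpha(H)$ that the four degree hypotheses force, and then evaluate its determinant by a Schur-complement computation. Order $V(H)$ as $u,v,W_1,W_2$, where $W_i=V(G_i)\setminus\{u_i,v_i\}$. Since $d^+_{u_2}(G_2)=d^+_{v_1}(G_1)=0$, the vertex $u$ has out-degree $d_1$ and $v$ has out-degree $d_2$ in $H$; and since $d^-_{u_1}(G_1)=d^-_{v_2}(G_2)=0$, there is no arc between $W_1$ and $W_2$, none from $W_1$ to $u$, none from $W_2$ to $v$, none from $u$ into $W_2$, and none from $v$ into $W_1$. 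Writing $p_1$ for the $(u_1,v_1)$-entry of $\Lambda(G_1,\alpha)$ and $p_2$ for the $(v_2,u_2)$-entry of $\Lambda(G_2,\alpha)$, letting $r_i^{\mathsf{T}}$ and $q_i$ be the surviving row and column blocks of $\Lambda(G_i,\alpha)$ linking $\{u_i,v_i\}$ to $W_i$, and setting $C_i=\Lambda(G_i,\alpha)(W_i)=A_i$, this produces
\[
\Lambda(H,\alpha)=\begin{pmatrix}
x-\alpha d_1 & p_1 & r_1^{\mathsf{T}} & 0\\
p_2 & x-\alpha d_2 & 0 & r_2^{\mathsf{T}}\\
0 & q_1 & C_1 & 0\\
q_2 & 0 & 0 & C_2
\end{pmatrix}.
\]

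Next I would match up the four determinants in the statement. Reading off submatrices in the ordering $(u_i,v_i,W_i)$ of $V(G_i)$ gives $\det A_i=\det C_i$; moreover $p_1$ sits in row $u_1$ and column $v_1$ of $\Lambda(G_1,\alpha)$, so it survives deletion of row $v_1$ and column $u_1$, giving $\det\Lambda(G_1)(v_1|u_1)=\det\bigl(\begin{smallmatrix}p_1 & r_1^{\mathsf{T}}\\ q_1 & C_1\end{smallmatrix}\bigr)$, and likewise $\det\Lambda(G_2)(u_2|v_2)=\det\bigl(\begin{smallmatrix}p_2 & r_2^{\mathsf{T}}\\ q_2 & C_2\end{smallmatrix}\bigr)$. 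Now assume $C_1,C_2$ invertible. Writing $\Lambda(H,\alpha)=\bigl(\begin{smallmatrix}P & R\\ Q & C\end{smallmatrix}\bigr)$ with $P=\bigl(\begin{smallmatrix}x-\alpha d_1 & p_1\\ p_2 & x-\alpha d_2\end{smallmatrix}\bigr)$ and $C=C_1\oplus C_2$, the vanishing of the $(W_1,u),(W_2,v)$ blocks of $Q$ and of the $(u,W_2),(v,W_1)$ blocks of $R$, together with the block-diagonal form of $C^{-1}$, gives $RC^{-1}Q=\bigl(\begin{smallmatrix}0 & r_1^{\mathsf{T}}C_1^{-1}q_1\\ r_2^{\mathsf{T}}C_2^{-1}q_2 & 0\end{smallmatrix}\bigr)$, hence $\phi_\alpha(H)=\det C\cdot\det(P-RC^{-1}Q)=\det C_1\det C_2\bigl[(x-\alpha d_1)(x-\alpha d_2)-(p_1-r_1^{\mathsf{T}}C_1^{-1}q_1)(p_2-r_2^{\mathsf{T}}C_2^{-1}q_2)\bigr]$. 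Distributing $\det C_i$ into the $i$-th factor and using the $1\times1$ Schur identity $\det C_i\,(p_i-r_i^{\mathsf{T}}C_i^{-1}q_i)=\det\bigl(\begin{smallmatrix}p_i & r_i^{\mathsf{T}}\\ q_i & C_i\end{smallmatrix}\bigr)$ yields exactly the asserted formula. The case where $C_1$ or $C_2$ is singular follows since both sides are polynomials in $x$ (equivalently, run the computation over the field of rational functions in a formal indeterminate).

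The computation is short, so the real work — and the only place an error could creep in — is the bookkeeping in the first step: checking that each vanishing feature of $\Lambda(H,\alpha)$ is forced by exactly one of the hypotheses $d^-_{u_1}(G_1)=d^+_{v_1}(G_1)=d^+_{u_2}(G_2)=d^-_{v_2}(G_2)=0$, and keeping track of signs so that the minus sign in the statement, produced by expanding the $2\times2$ Schur complement, comes out right. I would also note at the outset that one should take $u_1\neq v_1$ and $u_2\neq v_2$ (otherwise the hypotheses force an isolated vertex and the $A_i$ are not the intended objects); this is exactly the regime in which the lemma is applied, with $G_1,G_2$ the two bundles of internally disjoint directed paths that make up $\widetilde{\Theta}(\mathbf{K}_1,\mathbf{K}_2)$ and $u,v$ its two branch vertices.
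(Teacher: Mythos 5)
Your proof is correct and follows essentially the same route as the paper: both order $V(H)$ as $u,v,W_1,W_2$, read off the identical block structure of $\Lambda(H,\alpha)$ forced by the four degree hypotheses, and reduce the determinant to the two products of minors (the paper via a Laplace expansion along the first two rows, you via a Schur complement with respect to $C_1\oplus C_2$ --- an interchangeable computation). One minor point in your favor: you retain the entries $p_1,p_2$ in positions $(u_1,v_1)$ and $(v_2,u_2)$, which the paper's displayed matrices silently set to zero even though direct arcs there do occur in the intended application (paths of length $1$ in $\widetilde{\Theta}(\mathbf{K}_1,\mathbf{K}_2)$), so your version covers that case without modification.
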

	
	\begin{proof} 
		Giving a suitable ordering for the vertices of $G_1,G_2$ and $H$, we can assume that $\Lambda(G_1), \Lambda(G_2),\Lambda(H)$ has the following form:
		$$\Lambda(G_1)=\begin{pmatrix} x-\alpha d_{1} & 0 & \alpha_{1} \\ 0 & x & 0 \\ 0 & \beta_{1} & A_1\end{pmatrix}, \qquad
		\Lambda(G_2)=\begin{pmatrix} x & 0 & 0 \\ 0 & x-\alpha d_{2} & \alpha_{2} \\ \beta_{2} & 0 & A_2 \end{pmatrix},$$
		$$\Lambda(H)=\begin{pmatrix}x-\alpha d_{1} & 0 & \alpha_{1} & 0 \\ 0 & x-\alpha d_{2} & 0 & \alpha_{2} \\ 0 & \beta_{1} & A_1 & 0 \\ \beta_{2} & 0 & 0 & A_2\end{pmatrix}.$$
		
		By Laplace expansion with respect to the first two rows of $\Lambda(H)$,  we have
		
		\[
		\begin{aligned}
		\phi_\alpha(H) = (x-\alpha d_1) (x-\alpha d_2)\det(A_1)  \det(A_2)
		+\begin{vmatrix}0 & 0 & \alpha_{1} & 0 \\ 0 & 0 & 0 & \alpha_{2} \\ 0 & \beta_{1} & A_1 & 0 \\ \beta_{2} & 0 & 0 & A_2\end{vmatrix}.
		\end{aligned}
		\]
		By
		$$\begin{vmatrix}
		0         & 0         & \alpha_{1} & 0          \\
		0         & 0         & 0          & \alpha_{2} \\
		0         & \beta_{1} & A_1        & 0          \\
		\beta_{2} & 0         & 0          & A_2
		\end{vmatrix}
		=-\begin{vmatrix}
		0       & \alpha_{1} & 0         & 0          \\
		\beta_1 & A_1        & 0         & 0          \\
		0       & 0          & 0         & \alpha_{2} \\
		0       & 0          & \beta_{2} & A_2
		\end{vmatrix}$$
		and
		$$\Lambda(G_1)(v_1|u_1)
		=\begin{pmatrix}
		0       & \alpha_{1} \\
		\beta_1 & A_1
		\end{pmatrix}, \qquad
		\Lambda(G_2)(u_2|v_2)
		=\begin{pmatrix}
		0       & \alpha_{2} \\
		\beta_2 & A_2
		\end{pmatrix},$$
		we have
		\begin{align*}
		\phi_\alpha(H,x)= & (x-\alpha d_1) (x-\alpha d_2)\det(A_1)  \det(A_2)         \\
		-                 & \det(\Lambda(G_1)(v_1|u_1))  \det(\Lambda(G_2)(u_2|v_2)).
		\end{align*} \qedhere
	\end{proof}
	
	Let $A$ be an $n \times n$ matrix  and  $D(A)$ be the Coates digraph (See \cite{belardoCombinatorialApproachComputing2010a}) associated with $A$.
	\begin{lem}\cite{maybeeMatricesDigraphsDeterminants1989}\label{lem:cofactor}
		For any $a_{i j}$ off-diagonal  element of $A$,  the cofactor of $a_{i j}$ is given by
		$$
		A_{i j}=\sum_{k}(-1)^{l_{k}} A\left[p_{k}(j \rightarrow i)\right] \operatorname{det} A\left[V\left(p_{k}\right)\right],
		$$
		where the sum is taken over all paths in $D(A)$ from $j$ to $i,$ and $l_{k}$ is the length of path $p_{k}$.
		
	\end{lem}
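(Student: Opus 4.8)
The plan is to deduce the identity from the classical Coates expansion of a determinant together with the elementary observation that a cofactor is itself a determinant. Write $A_{ij}$ for the $(i,j)$-cofactor, and let $A'$ be the matrix obtained from $A$ by replacing its $j$-th column with the standard unit vector $\mathbf e_i$. Expanding $\det A'$ along its $j$-th column (and using that $A$ and $A'$ agree off column $j$, while $i\ne j$) gives $A_{ij}=\det A'$, so it suffices to expand $\det A'$ combinatorially. For this I would invoke the Coates determinant formula: for any $n\times n$ matrix $B$,
\[
\det B=\sum_{\mathcal L}(-1)^{\,n-c(\mathcal L)}\prod_{(u,v)\in E(\mathcal L)}b_{uv},
\]
where $\mathcal L$ runs over the spanning linear subdigraphs of $D(B)$ (vertex-disjoint unions of directed cycles, loops included, covering all $n$ vertices) and $c(\mathcal L)$ is the number of cycles. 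This is just the Leibniz expansion of $\det B$ with each permutation written as a product of disjoint cycles; it is the non-recursive companion of the Belardo et al. formula quoted above, and it follows in one line from Leibniz.

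Now apply this with $B=A'$. Its Coates digraph $D(A')$ is $D(A)$ with every arc into vertex $j$ deleted and a single arc $i\to j$ of weight $1$ inserted. Hence in any spanning linear subdigraph $\mathcal L$ of $D(A')$ the cycle $C_0$ through $j$ must enter $j$ via $i\to j$; deleting that arc from $C_0$ leaves a directed path $p$ from $j$ to $i$ in $D(A)$, while the remaining cycles of $\mathcal L$ form a spanning linear subdigraph $\mathcal L'$ of the principal submatrix of $A$ indexed by the vertices off $p$. The correspondence $\mathcal L\leftrightarrow(p,\mathcal L')$ is a bijection, and it factors weights: $w(C_0)=\prod_{(u,v)\in E(p)}a_{uv}=A[p(j\to i)]$ (the inserted arc contributes $1$), and $w(\mathcal L)=w(C_0)\,w(\mathcal L')$.

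It remains to reconcile the signs, which is the only delicate point. If $p$ has length $l$ it has $l+1$ vertices, $C_0$ is a cycle of length $l+1$, and the complement of $V(p)$ has $n-l-1$ vertices carrying $c(\mathcal L)-1$ cycles; thus $(-1)^{\,n-c(\mathcal L)}=(-1)^{\,l}\cdot(-1)^{\,(n-l-1)-(c(\mathcal L)-1)}$, and the second factor is exactly the sign with which $\mathcal L'$ occurs in the Coates expansion of $\det A[V(p)]$ (the principal submatrix of $A$ obtained by deleting the rows and columns indexed by $V(p)$). Summing over $\mathcal L'$ first and then over $p$ yields $A_{ij}=\sum_{p}(-1)^{\,l(p)}\,A[p(j\to i)]\,\det A[V(p)]$, the stated formula. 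The main obstacle is precisely this parity bookkeeping: one must fix the Coates-digraph convention (arc $u\to v$ carrying weight $a_{uv}$, matching the paper's $\omega_A$), use $i\ne j$ to guarantee that $j$ lies on a genuine cycle of length $\ge 2$ so that $p$ is a nontrivial path, and verify the identity $n-c(\mathcal L)=l+\big((n-l-1)-c(\mathcal L')\big)$. A slightly more hands-on variant skips $A'$ and expands the minor $\det A(i\mid j)$ directly by Leibniz, attaching to each bijection $[n]\setminus\{i\}\to[n]\setminus\{j\}$ its functional digraph — an in/out-degree count shows this is automatically a $j\to i$ path plus disjoint cycles — and absorbs the $(-1)^{i+j}$ of the cofactor into the permutation sign; the bijection and the sign computation are the same in substance.
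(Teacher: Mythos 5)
The paper offers no proof of this lemma: it is quoted directly from the cited reference of Maybee, Olesky, van den Driessche and Wiener, so there is no in-paper argument to compare yours against. Your proof is correct and is essentially the standard derivation: the reduction $A_{ij}=\det A'$ (column $j$ replaced by $\mathbf{e}_i$), the Coates/Leibniz expansion of $\det A'$, the bijection $\mathcal{L}\leftrightarrow(p,\mathcal{L}')$ forced by the fact that the only arc into $j$ in $D(A')$ is $i\to j$, and the sign identity $n-c(\mathcal{L})=l+\bigl((n-l-1)-c(\mathcal{L}')\bigr)$ (using $c(\mathcal{L})=c(\mathcal{L}')+1$) all check out, and the resulting formula is consistent both with small cases (e.g.\ $A_{12}=-a_{21}$ for $n=2$) and with the paper's application in the formula for $\det(\Lambda(P_{u\to v}(\mathbf{K}))(v|u))$.
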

	
	Let $\mathbf{K}=(k_1,k_2,\ldots, k_s) \in \widetilde{S}_{m}^s$ and  $P_{u\to v}(\mathbf{K})$ be the digraph with $n$ vertices and $m(m=n+s-2)$ edges obtained from two vertices $u$, $v$ by adding $s$ internally-disjoint directed paths with lengths $k_1,k_2,\ldots, k_s$ from $u$ to $v$.  Let $d=\frac{1-\alpha}{x-\alpha}$.
	Take matrix $A$ in Lemma \ref{lem:cofactor} to be $\Lambda(P_{u\to v}(\mathbf{K}))$, we have
	\begin{equation}\label{det:cofactor}
	\det(\Lambda(P_{u\to v}(\mathbf{K}))(v|u))= (x-\alpha)^{n-1} \sum_{i=1}^{s} d^{k_i}.
	\end{equation}
	
	By Lemma \ref{lemd5} and Formula \eqref{det:cofactor}, we have the following corollary.
	
	\begin{cor}\label{cors1} 
		Let $\mathbf{K}_1=(k_1,k_2,\dots,k_s)$, $\mathbf{K}_2=(l_1,l_2,\dots,l_t)$ and the digraph $\widetilde{\Theta}
		(\mathbf{K}_1,\mathbf{K}_2)$ with $n$ vertices, then we have
		\begin{equation}\label{theta:chp}
		\begin{aligned}
		\phi_{\alpha}(\widetilde{\Theta}
		(\mathbf{K}_1,\mathbf{K}_2))
		=(x-\alpha)^{n-2}(x-\alpha s)(x-\alpha t)-
		(x-\alpha)^n(\sum_{i=1}^sd^{k_i}\sum_{j=1}^t
		d^{l_j}).
		\end{aligned}
		\end{equation}
	\end{cor}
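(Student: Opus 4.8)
The plan is to compute $\phi_{\alpha}(\widetilde{\Theta}(\mathbf{K}_1,\mathbf{K}_2))$ directly from Lemma~\ref{lemd5}, treating the generalized theta digraph as the merge of two path-bundle digraphs. Specifically, I would take $G_1 = P_{u\to v}(\mathbf{K}_1)$ (the digraph built from $u_1,v_1$ by $s$ internally-disjoint directed paths from $u_1$ to $v_1$, with lengths $k_1,\ldots,k_s$) and $G_2 = P_{v\to u}(\mathbf{K}_2)$ (built from $u_2,v_2$ by $t$ internally-disjoint directed paths from $u_2$ to $v_2$, with lengths $l_1,\ldots,l_t$). Note that in $G_1$ the source $u_1$ has in-degree $0$ and the sink $v_1$ has out-degree $0$, while in $G_2$ we relabel so that $v_2$ plays the role of the source and $u_2$ the sink, matching the hypotheses $d^{-}_{u_1}=d^{+}_{v_1}=0$ and $d^{+}_{u_2}=d^{-}_{v_2}=0$ of Lemma~\ref{lemd5}. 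Identifying $u_1\sim u_2 \to u$ and $v_1\sim v_2\to v$ yields exactly $\widetilde{\Theta}(\mathbf{K}_1,\mathbf{K}_2)$, and the out-degrees at the merged vertices are $d_1 = d^{+}_{u_1}(G_1) = s$ and $d_2 = d^{+}_{v_2}(G_2) = t$, so the first term of Lemma~\ref{lemd5} becomes $(x-\alpha s)(x-\alpha t)\det(A_1)\det(A_2)$.

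Next I would evaluate the four matrix determinants that appear. For $A_1 = \Lambda(G_1)(\{u_1,v_1\})$: deleting both $u_1$ and $v_1$ from the path-bundle leaves $s$ disjoint directed paths on $k_i - 1$ internal vertices each (for $i=1,\ldots,s$), whose $\Lambda$-matrices are upper/lower triangular with $x-\alpha$ on the diagonal; hence $\det(A_1) = \prod_{i=1}^s (x-\alpha)^{k_i - 1} = (x-\alpha)^{\sum(k_i-1)} = (x-\alpha)^{n_1 - 2}$ where $n_1$ is the order of $G_1$. Similarly $\det(A_2) = (x-\alpha)^{n_2 - 2}$. Since $\widetilde{\Theta}$ has $n = n_1 + n_2 - 2$ vertices, the product $\det(A_1)\det(A_2) = (x-\alpha)^{n_1 + n_2 - 4} = (x-\alpha)^{n-2}$, giving the claimed leading term $(x-\alpha)^{n-2}(x-\alpha s)(x-\alpha t)$. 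For the second term I would invoke Formula~\eqref{det:cofactor}: $\det(\Lambda(G_1)(v_1|u_1)) = \det(\Lambda(P_{u\to v}(\mathbf{K}_1))(v|u)) = (x-\alpha)^{n_1-1}\sum_{i=1}^s d^{k_i}$, and likewise $\det(\Lambda(G_2)(u_2|v_2)) = (x-\alpha)^{n_2-1}\sum_{j=1}^t d^{l_j}$ (here the roles of the two deleted vertices are interchanged because $G_2$'s directed paths run from $v_2$ to $u_2$). Their product is $(x-\alpha)^{n_1+n_2-2}\left(\sum_i d^{k_i}\right)\left(\sum_j d^{l_j}\right) = (x-\alpha)^{n}\left(\sum_i d^{k_i}\right)\left(\sum_j d^{l_j}\right)$. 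Subtracting this from the first term yields exactly \eqref{theta:chp}.

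The main obstacle I anticipate is purely bookkeeping: correctly setting up the block structure so that Lemma~\ref{lemd5} applies verbatim, in particular keeping straight which of $u$ and $v$ is a source and which is a sink in each of $G_1$ and $G_2$, and ensuring the index conventions for $\Lambda(G_i)(v_i|u_i)$ versus $\Lambda(G_i)(u_i|v_i)$ line up with the cofactor formula \eqref{det:cofactor}. A secondary subtlety is confirming that the exponent arithmetic $n = n_1 + n_2 - 2$ is used consistently across all four determinants so that every power of $(x-\alpha)$ collapses to the stated $(x-\alpha)^{n-2}$ and $(x-\alpha)^n$; once Lemma~\ref{lemd5} and Formula~\eqref{det:cofactor} are in hand, no further ideas are needed and the computation is short.
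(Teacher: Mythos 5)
Your proposal is correct and follows essentially the same route as the paper: apply Lemma \ref{lemd5} with $G_1=P_{u_1\to v_1}(\mathbf{K}_1)$ and $G_2=P_{v_2\to u_2}(\mathbf{K}_2)$, evaluate $\det(A_i)=(x-\alpha)^{n_i-2}$, and use Formula \eqref{det:cofactor} for the two off-diagonal cofactor determinants, with $n=n_1+n_2-2$ collapsing the powers of $(x-\alpha)$ as claimed. The only difference is that you spell out why $\det(A_1)=(x-\alpha)^{n_1-2}$ (the triangular block structure of the internal path vertices), which the paper simply asserts.
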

	\begin{proof}
		Take $H=\widetilde{\Theta}(\mathbf{K}_1, \mathbf{K}_2)$, $G_1=P_{u_1\to v_1}(\mathbf{K}_1)$, $G_2=P_{v_2\to u_2}(\mathbf{K}_2)$ and $|V(G_i)|=n_i(i=1,2)$ in Lemma \ref{lemd5},
		then we have
		\begin{align*}
		\phi_\alpha(H)= & (x-\alpha s) (x-\alpha t)\det(A_1)  \det(A_2)             \\
		-               & \det(\Lambda(G_1)(v_1|u_1))  \det(\Lambda(G_2)(u_2|v_2)),
		\end{align*}
		where $\det(A_1)=(x-\alpha)^{n_1-2}$ and $\det(A_2)=(x-\alpha)^{n_2-2}$.
		
		Take $\mathbf{K}=\mathbf{K}_1$ in Formula \eqref{det:cofactor}, we have
		\begin{align*}
		\det(\Lambda(G_1)(v_1|u_1)) =(x-\alpha)^{n_1-1} \sum_{i=1}^{s} d^{k_i}.
		\end{align*}
		Similarly, we have
		\begin{align*}
		\det(\Lambda(G_2)(u_2|v_2)) =(x-\alpha)^{n_2-1} \sum_{i=1}^{t} d^{l_i}.
		\end{align*}
		Then
		\begin{align*}
		\phi_\alpha(H)= & (x-\alpha s) (x-\alpha t)(x-\alpha)^{n_1-2}(x-\alpha)^{n_2-2}                                             \\
		-               & (x-\alpha)^{n_1-1} \sum_{i=1}^{s} d^{k_i}(x-\alpha)^{n_2-1} \sum_{i=1}^{t} d^{l_i}                        \\
		=               & (x-\alpha s) (x-\alpha t)(x-\alpha)^{n-2}-(x-\alpha)^{n} (\sum_{i=1}^{s} d^{k_i} \sum_{i=1}^{t} d^{l_i}),
		\end{align*}
		where $n=n_1+n_2-2$.
		
	\end{proof}

	In the sequel, denote $\displaystyle F(\mathbf{K}_1,\mathbf{K}_2)=\sum_{i \in \mathbf{K}_1}d^i\sum_{j \in \mathbf{K}_2}d^j$ for $(K_1,K_2) \in Q(m,k)$ and $d=\frac{1-\alpha}{x-\alpha}$.
	\begin{lem}\label{lems111}
		For $\mathbf{K}_1\in \widetilde{S}_{m_1}^{s}, \mathbf{K}'_1 \in \widetilde{S}_{m'_1}^{s},\ \mathbf{K}_2\in \widetilde{S}_{m_2}^{t},  \mathbf{K}'_2 \in \widetilde{S}_{m'_2}^{t}$ and $m_1+m_2=m'_1+m'_2=m$.  If $F(\mathbf{K}_1,\mathbf{K}_2)\le F(\mathbf{K}'_1,\mathbf{K}'_2) $, then we have
		$$\rho_{\alpha}(\widetilde{\Theta}(\mathbf{K}_1,\mathbf{K}_2))\le \rho_{\alpha}(\widetilde{\Theta}(\mathbf{K}'_1,\mathbf{K}'_2)),$$
		and the equality holds if and only if $\mathbf{K}_1 =\mathbf{K}'_1$ and $\mathbf{K}_2 =\mathbf{K}'_2 $.
	\end{lem}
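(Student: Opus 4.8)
The plan is to push everything through the closed-form expression for $\phi_\alpha$ supplied by Corollary~\ref{cors1}. Observe first that the number of vertices of $\widetilde{\Theta}(\mathbf{K}_1,\mathbf{K}_2)$ equals $m-s-t+2$, which depends only on $m$, $s$, $t$; hence $\widetilde{\Theta}(\mathbf{K}_1,\mathbf{K}_2)$ and $\widetilde{\Theta}(\mathbf{K}'_1,\mathbf{K}'_2)$ share the same order $n$. Writing $g(x)=(x-\alpha)^{n-2}(x-\alpha s)(x-\alpha t)$, Corollary~\ref{cors1} gives
\[
\phi_\alpha(\widetilde{\Theta}(\mathbf{K}_1,\mathbf{K}_2),x)=g(x)-(x-\alpha)^n F(\mathbf{K}_1,\mathbf{K}_2),\qquad
\phi_\alpha(\widetilde{\Theta}(\mathbf{K}'_1,\mathbf{K}'_2),x)=g(x)-(x-\alpha)^n F(\mathbf{K}'_1,\mathbf{K}'_2),
\]
so the two characteristic polynomials differ only through the quantities $F(\mathbf{K}_1,\mathbf{K}_2)$ and $F(\mathbf{K}'_1,\mathbf{K}'_2)$.

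Next, set $\rho=\rho_\alpha(\widetilde{\Theta}(\mathbf{K}_1,\mathbf{K}_2))$ and $\rho'=\rho_\alpha(\widetilde{\Theta}(\mathbf{K}'_1,\mathbf{K}'_2))$. Since a generalized theta digraph is strongly connected, part (iii) of Lemma~\ref{lemc2} gives $\rho,\rho'\ge\delta^+\ge 1>\alpha$, so $(\rho-\alpha)^n>0$. Moreover, by part (i) of Lemma~\ref{lemc2}, $\rho'$ is the largest real root of the monic polynomial $\phi_\alpha(\widetilde{\Theta}(\mathbf{K}'_1,\mathbf{K}'_2),\cdot)$, hence this polynomial is positive on $(\rho',\infty)$. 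Now evaluate the two identities above at $x=\rho$: from $\phi_\alpha(\widetilde{\Theta}(\mathbf{K}_1,\mathbf{K}_2),\rho)=0$ we get $g(\rho)=(\rho-\alpha)^n F(\mathbf{K}_1,\mathbf{K}_2)$, and therefore
\[
\phi_\alpha(\widetilde{\Theta}(\mathbf{K}'_1,\mathbf{K}'_2),\rho)=(\rho-\alpha)^n\bigl(F(\mathbf{K}_1,\mathbf{K}_2)-F(\mathbf{K}'_1,\mathbf{K}'_2)\bigr)\le 0
\]
by the hypothesis $F(\mathbf{K}_1,\mathbf{K}_2)\le F(\mathbf{K}'_1,\mathbf{K}'_2)$. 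Since $\phi_\alpha(\widetilde{\Theta}(\mathbf{K}'_1,\mathbf{K}'_2),\cdot)$ is positive beyond $\rho'$, this forces $\rho\le\rho'$, which is the asserted inequality.

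For the equality case, plugging $\rho=\rho'$ into the last display yields $(\rho-\alpha)^n\bigl(F(\mathbf{K}_1,\mathbf{K}_2)-F(\mathbf{K}'_1,\mathbf{K}'_2)\bigr)=0$, hence $F(\mathbf{K}_1,\mathbf{K}_2)=F(\mathbf{K}'_1,\mathbf{K}'_2)$; conversely, if $F(\mathbf{K}_1,\mathbf{K}_2)=F(\mathbf{K}'_1,\mathbf{K}'_2)$ then the two characteristic polynomials coincide and $\rho=\rho'$. It then remains to show that $F$ separates distinct pairs, i.e. that the identity $\bigl(\sum_{i\in\mathbf{K}_1}d^{i}\bigr)\bigl(\sum_{j\in\mathbf{K}_2}d^{j}\bigr)=\bigl(\sum_{i\in\mathbf{K}'_1}d^{i}\bigr)\bigl(\sum_{j\in\mathbf{K}'_2}d^{j}\bigr)$ of polynomials in $d$ forces $\mathbf{K}_1=\mathbf{K}'_1$ and $\mathbf{K}_2=\mathbf{K}'_2$; here one compares the lowest- and highest-degree monomials of each factor and uses that each of $\mathbf{K}_1$, $\mathbf{K}_2$ contains at most one part equal to $1$ together with the fact that $s$, $t$ are fixed (when $s=t$ this determines the pair only up to interchanging $\mathbf{K}_1$ and $\mathbf{K}_2$, which corresponds to the isomorphism $u\leftrightarrow v$ and leaves $\rho_\alpha$ unchanged). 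I expect this bookkeeping — verifying injectivity of $F$ on the relevant index set — to be the only delicate point; the monotonicity of $\rho_\alpha$ itself falls out immediately from Corollary~\ref{cors1} together with the Perron--Frobenius positivity of the characteristic polynomial above the spectral radius.
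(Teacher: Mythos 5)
Your argument for the main inequality is correct and is essentially the paper's: both rest on Corollary \ref{cors1}, which shows that the two characteristic polynomials (of the same degree $n=m-s-t+2$) differ only by the term $(x-\alpha)^n\bigl(F(\mathbf{K}_1,\mathbf{K}_2)-F(\mathbf{K}'_1,\mathbf{K}'_2)\bigr)$; you merely make explicit the step the paper leaves implicit (evaluate at $x=\rho$ and use that a monic characteristic polynomial is positive beyond its largest real root, the Perron root), and the same computation with strict inequality yields the strict version, which is all that is actually invoked later in Lemma \ref{lems1} and Theorem \ref{thm2}.

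On the equality clause you are more candid than the paper, which asserts it without proof, but your sketch contains a slip: from $\rho=\rho'$ you only obtain $F(\mathbf{K}_1,\mathbf{K}_2)=F(\mathbf{K}'_1,\mathbf{K}'_2)$ at the single point $x=\rho$ (equivalently at one value of $d$), not as an identity of polynomials in $d$, so the monomial-comparison/injectivity argument you propose cannot yet be applied; one would first have to upgrade single-point equality to identity, e.g.\ by combining the standing hypothesis $F\le F'$ on all of $d\in(0,1)$ with a separate argument. Moreover, as you yourself observe, when $s=t$ interchanging $\mathbf{K}_1$ and $\mathbf{K}_2$ preserves $F$ and gives isomorphic digraphs with a different labelled pair, so the equality clause as literally stated is not provable. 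Neither of these issues affects the way the lemma is used in the rest of the paper, which only requires that strict pointwise inequality of $F$ forces strict inequality of the $\alpha$-spectral radii.
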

	\begin{proof}
		By Corollary \ref{cors1}, we have
		\begin{align*}
		\phi_{\alpha}(\widetilde{\Theta}
		(\mathbf{K}_1,\mathbf{K}_2))
		=(x-\alpha)^{n-2}(x-\alpha s)(x-\alpha t)-
		(x-\alpha)^nF(\mathbf{K}_1,\mathbf{K}_2).
		\end{align*}
		Hence,
		\begin{align*}
		& \phi_{\alpha}(\widetilde{\Theta}(\mathbf{K}'_1,\mathbf{K}'_2))-\phi_{\alpha}(\widetilde{\Theta}(\mathbf{K}_1,\mathbf{K}_2)) \\
		& =(x-\alpha)^n(F(\mathbf{K}_1,\mathbf{K}_2)-F(\mathbf{K}'_1,\mathbf{K}'_2)).
		\end{align*}
		If $F(\mathbf{K}_1,\mathbf{K}_2)\le F(\mathbf{K}'_1,\mathbf{K}'_2) $, then  $\phi_{\alpha}(\widetilde{\Theta}(\mathbf{K}'_1,\mathbf{K}'_2))\le \phi_{\alpha}(\widetilde{\Theta}(\mathbf{K}_1,\mathbf{K}_2))$, which implies $\rho_{\alpha}(\widetilde{\Theta}(\mathbf{K}_1,\mathbf{K}_2))\le \rho_{\alpha}(\widetilde{\Theta}(\mathbf{K}'_1,\mathbf{K}'_2)),$ and the equality holds if and only if $\mathbf{K}_1 =\mathbf{K}'_1$ and $\mathbf{K}_2 =\mathbf{K}'_2 $.
	\end{proof}

	\begin{lem}\label{lems1}
		For $\mathbf{K}_1, \mathbf{K}'_1 \in \widetilde{S}_{m_1}^{s},\ \mathbf{K}_2,  \mathbf{K}'_2 \in \widetilde{S}_{m_2}^{t}$.  If $\mathbf{K}_1 \preceq \mathbf{K}'_1$ and $\mathbf{K}_2 \preceq \mathbf{K}'_2 $, then we have
		$$\rho_{\alpha}(\widetilde{\Theta}(\mathbf{K}_1,\mathbf{K}_2))\le \rho_{\alpha}(\widetilde{\Theta}(\mathbf{K}'_1,\mathbf{K}'_2)).$$
		Moreover, the equality holds if and only if $\mathbf{K}_1 =\mathbf{K}'_1$ and $\mathbf{K}_2 =\mathbf{K}'_2 $.
	\end{lem}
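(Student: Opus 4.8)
The plan is to reduce the statement to Lemma~\ref{lems111}: it suffices to prove that $\mathbf{K}_1\preceq\mathbf{K}'_1$ and $\mathbf{K}_2\preceq\mathbf{K}'_2$ force $F(\mathbf{K}_1,\mathbf{K}_2)\le F(\mathbf{K}'_1,\mathbf{K}'_2)$ for $x>1$, with equality only when $\mathbf{K}_1=\mathbf{K}'_1$ and $\mathbf{K}_2=\mathbf{K}'_2$. Recall $F(\mathbf{K}_1,\mathbf{K}_2)=\bigl(\sum_{i\in\mathbf{K}_1}d^{i}\bigr)\bigl(\sum_{j\in\mathbf{K}_2}d^{j}\bigr)$ with $d=\frac{1-\alpha}{x-\alpha}$, and $0<d<1$ whenever $x>1$. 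Since $F$ is a product of two strictly positive quantities, it is enough to show that the single power sum $\varphi(\mathbf{K}):=\sum_{i\in\mathbf{K}}d^{i}$ is nondecreasing in the order $\preceq$, strictly so unless the two partitions coincide; multiplying the two instances $\varphi(\mathbf{K}_1)\le\varphi(\mathbf{K}'_1)$ and $\varphi(\mathbf{K}_2)\le\varphi(\mathbf{K}'_2)$ then yields the inequality for $F$, and Lemma~\ref{lems111} converts it into the desired comparison of $\rho_\alpha$ together with its equality clause.

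To establish the monotonicity of $\varphi$ I would argue by a computation analogous to that in the proof of Lemma~\ref{lem3}. By Lemma~\ref{lem:major} it suffices to treat the case where $\mathbf{K}'$ covers $\mathbf{K}$, i.e., $\mathbf{K}'=\mathbf{K}+\mathbf{e}_a-\mathbf{e}_b$ for some $a<b$, where $\mathbf{K}$ and $\mathbf{K}'$ are nonincreasing sequences of positive integers (every member of the covering chain given by Lemma~\ref{lem:major} lies between $\mathbf{K}_1$ and $\mathbf{K}'_1$ in the order $\preceq$, hence still has all entries at least $1$). Writing $k_a,k_b$ for the $a$-th and $b$-th entries of $\mathbf{K}$, we obtain
\begin{align*}
\varphi(\mathbf{K}')-\varphi(\mathbf{K})
&= d^{k_a+1}-d^{k_a}+d^{k_b-1}-d^{k_b}
= (1-d)\bigl(d^{k_b-1}-d^{k_a}\bigr)>0,
\end{align*}
because $a<b$ and $\mathbf{K}$ nonincreasing give $k_a\ge k_b>k_b-1$, while $0<d<1$ makes $d^{k_b-1}>d^{k_a}$ and $1-d>0$. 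Summing these strictly positive increments along the chain shows $\varphi(\mathbf{K})\le\varphi(\mathbf{K}')$ whenever $\mathbf{K}\preceq\mathbf{K}'$, with equality precisely when $\mathbf{K}=\mathbf{K}'$. (Conceptually this is just the strict Schur-convexity of $\mathbf{y}\mapsto\sum_i d^{y_i}$, coming from strict convexity of $t\mapsto d^{t}$ for $0<d<1$; one could instead cite \cite{marshallInequalitiesTheoryMajorization2011} directly in place of the chain argument.)

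Combining the two instances gives $F(\mathbf{K}_1,\mathbf{K}_2)\le F(\mathbf{K}'_1,\mathbf{K}'_2)$ for $x>1$, with equality iff $\mathbf{K}_1=\mathbf{K}'_1$ and $\mathbf{K}_2=\mathbf{K}'_2$, and Lemma~\ref{lems111} then completes the proof. I do not anticipate a genuine obstacle here; the only points requiring care are bookkeeping ones: verifying that the covering chain of Lemma~\ref{lem:major} can be taken within the nonincreasing positive-integer sequences (so that the sign of $d^{k_b-1}-d^{k_a}$ is under control), and propagating the equality case through both the monotonicity of $\varphi$ and through Lemma~\ref{lems111}.
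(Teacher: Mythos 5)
Your proposal is correct and follows essentially the same route as the paper: reduce via Lemma~\ref{lems111} to comparing $F$, use Lemma~\ref{lem:major} to pass to a single covering move $\mathbf{K}'=\mathbf{K}+\mathbf{e}_a-\mathbf{e}_b$, and verify the sign of $(1-d)\bigl(d^{k_b-1}-d^{k_a}\bigr)$ for $0<d<1$ — the paper performs the identical computation with the factor $q=\sum_j d^{l_j}$ carried along rather than factoring $F=\varphi(\mathbf{K}_1)\varphi(\mathbf{K}_2)$ as you do. Your factorization is a mild tidying (it also handles the case where both majorizations are strict without the paper's ``WLOG $\mathbf{K}_2=\mathbf{K}'_2$'' step), but the key ideas and lemmas invoked are the same.
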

	\begin{proof}
		
		If $\mathbf{K}_1 =\mathbf{K}'_1$ and $\mathbf{K}_2 =\mathbf{K}'_2 $, then $\widetilde{\Theta}(\mathbf{K}_1,\mathbf{K}_2)=\widetilde{\Theta}(\mathbf{K}'_1,\mathbf{K}'_2)$ and $\rho_{\alpha}(\widetilde{\Theta}(\mathbf{K}_1,\mathbf{K}_2))= \rho_{\alpha}(\widetilde{\Theta}(\mathbf{K}'_1,\mathbf{K}'_2)).$
		
		If one of  $\mathbf{K}_1 \preceq \mathbf{K}'_1$ and $\mathbf{K}_2 \preceq \mathbf{K}'_2 $ is strict, 
		from $F(\mathbf{K}_1,\mathbf{K}_2)=F(\mathbf{K}_2,\mathbf{K}_1)$, 
		without loss of generality, we assume that $\mathbf{K}_1 \prec \mathbf{K}'_1$ and $\mathbf{K}_2 = \mathbf{K}'_2$. By Lemmas \ref{lem:major} and \ref{lems111}, to prove $\rho_{\alpha}(\widetilde{\Theta}(\mathbf{K}_1,\mathbf{K}_2))< \rho_{\alpha}(\widetilde{\Theta}(\mathbf{K}'_1,\mathbf{K}'_2))$, it is sufficient to show 	\begin{center}
			$F(\mathbf{K}_1,\mathbf{K}_2)< F(\mathbf{K}'_1,\mathbf{K}_2)$, when $\mathbf{K}'_1$ covers  $\mathbf{K}_1$.
		\end{center}
		Without loss of generality, suppose that $\mathbf{K}'_1=\mathbf{K}_1+\mathbf{e}_i-\mathbf{e}_j$ with $1\leq i<j\leq s$.
		
		Take $p=\sum\limits_{r=1}^s d^{k_r}-d^{k_i}-d^{k_j}$ and $q=\sum\limits_{j=1}^t d^{l_j}$. We have
		\begin{align*}
		& F(\mathbf{K}'_1,\mathbf{K}_2)-F(\mathbf{K}_1,\mathbf{K}_2) \\
		& =(p+d^{k_i+1}+d^{k_j-1})q-(p+d^{k_i}+d^{k_j})q             \\
		& =(d^{k_i}-d^{k_j-1})(d-1)q.
		\end{align*}
		Since $d=\frac{1-\alpha}{x-\alpha}$, $x>1$ and  $k_i\ge k_j$,  $F(\mathbf{K}'_1,\mathbf{K}_2)> F(\mathbf{K}_1,\mathbf{K}_2)$ holds.
		The conclusion of this lemma follows.
	\end{proof}

	By Lemma \ref{lems1}, we have the following result.
	\begin{thm}\label{thmd8}
		Let $\mathbf{\xi}_1=\mathbf{x}^{\ast }(\widetilde{S}_{m_1}^s),\mathbf{\xi}_2=\mathbf{x}^{\ast }(\widetilde{S}_{m_2}^t)$ and $\mathbf{\eta}_1=\mathbf{x}_{\ast }(\widetilde{S}_{m_1}^s), \mathbf{\eta}_2=\mathbf{x}_{\ast }(\widetilde{S}_{m_2}^t)$.
		
		Then
		$\widetilde{\Theta}(\mathbf{\xi}_1,\mathbf{\xi}_2)$ and $\widetilde{\Theta}(\mathbf{\eta}_1,\mathbf{\eta}_2)$  are  the unique 	digraphs with the maximal and minimal $\alpha$-spectral radius among $\prescript{s}{t}{\widetilde{\boldsymbol\Theta}_{m_2}^{m_1}}$, respectively.
	\end{thm}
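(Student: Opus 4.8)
The plan is to obtain Theorem~\ref{thmd8} as a direct consequence of Lemma~\ref{lems1}, which already records the monotonicity of $\rho_\alpha$ under the majorization order applied separately to each of the two path-length partitions. The only additional input is the fact, recalled just before the statement, that for a fixed exponent and a fixed size the set $\widetilde{S}_{m_1}^s$ possesses a unique maximal element $\xi_1=\mathbf{x}^{\ast}(\widetilde{S}_{m_1}^s)$ and a unique minimal element $\eta_1=\mathbf{x}_{\ast}(\widetilde{S}_{m_1}^s)$ with respect to $\preceq$ (with the explicit forms $\xi_1=\zeta^{m_1}_s$ and the nearly-balanced $\eta_1$), and likewise $\xi_2,\eta_2$ for $\widetilde{S}_{m_2}^t$.

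First I would fix an arbitrary $\widetilde{\Theta}(\mathbf{K}_1,\mathbf{K}_2)\in\prescript{s}{t}{\widetilde{\boldsymbol\Theta}_{m_2}^{m_1}}$, so that by definition $\mathbf{K}_1\in\widetilde{S}_{m_1}^s$ and $\mathbf{K}_2\in\widetilde{S}_{m_2}^t$. By the maximality of $\xi_1$ and $\xi_2$ we have $\mathbf{K}_1\preceq\xi_1$ and $\mathbf{K}_2\preceq\xi_2$, so Lemma~\ref{lems1} gives $\rho_\alpha(\widetilde{\Theta}(\mathbf{K}_1,\mathbf{K}_2))\le\rho_\alpha(\widetilde{\Theta}(\xi_1,\xi_2))$, with equality if and only if $\mathbf{K}_1=\xi_1$ and $\mathbf{K}_2=\xi_2$; hence $\widetilde{\Theta}(\xi_1,\xi_2)$ is the unique maximizer. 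Symmetrically, by the minimality of $\eta_1,\eta_2$ we have $\eta_1\preceq\mathbf{K}_1$ and $\eta_2\preceq\mathbf{K}_2$, and Lemma~\ref{lems1} (with the roles of primed and unprimed partitions interchanged) yields $\rho_\alpha(\widetilde{\Theta}(\eta_1,\eta_2))\le\rho_\alpha(\widetilde{\Theta}(\mathbf{K}_1,\mathbf{K}_2))$, again with equality only for $\mathbf{K}_1=\eta_1$, $\mathbf{K}_2=\eta_2$, so $\widetilde{\Theta}(\eta_1,\eta_2)$ is the unique minimizer.

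I do not expect a substantial obstacle here: the analytic content (the characteristic-polynomial identity of Corollary~\ref{cors1}, the comparison of the quantities $F(\mathbf{K}_1,\mathbf{K}_2)$, and the strictness along covering steps) has all been discharged inside Lemmas~\ref{lems111} and~\ref{lems1}. The one point worth stating explicitly rather than brushing aside is the orientation of the order $\preceq$: with the ascending-partial-sum convention adopted in this paper, $\mathbf{x}^{\ast}$ is genuinely an upper bound and $\mathbf{x}_{\ast}$ a lower bound for every admissible partition, so the hypotheses $\mathbf{K}_i\preceq\xi_i$ and $\eta_i\preceq\mathbf{K}_i$ of Lemma~\ref{lems1} really do hold for all members of $\prescript{s}{t}{\widetilde{\boldsymbol\Theta}_{m_2}^{m_1}}$ — the same bookkeeping already used in the proof of Theorem~\ref{thm1}. (Alternatively one could route through Lemma~\ref{lem:major}, reducing to single covering steps and citing the $F$-monotonicity from Lemma~\ref{lems1}, but passing through the global extremal elements directly is shorter.)
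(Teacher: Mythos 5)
Your proposal is correct and follows exactly the paper's route: the paper derives Theorem~\ref{thmd8} directly from Lemma~\ref{lems1} by comparing an arbitrary $(\mathbf{K}_1,\mathbf{K}_2)$ with the extremal elements $(\xi_1,\xi_2)$ and $(\eta_1,\eta_2)$ of $\widetilde{S}_{m_1}^s\times\widetilde{S}_{m_2}^t$ under majorization, with uniqueness coming from the equality case of that lemma. Your additional remark on the orientation of $\preceq$ is a harmless elaboration of the same argument.
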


	In \cite{MR2954483} and \cite{MR3777498},  Guo et al. and Li et al. showed that the digraph
	$\widetilde{\Theta}_m(\mathbf{\zeta}^m_t)$
	($\widetilde{\Theta}_m(\mathbf{\zeta}_t)$)
	achieves the maximal (minimal) adjacency spectral radius and signless Laplacian spectral radius among $\widetilde{\boldsymbol{\Theta}}_t^1(m)$ and when $s\ge t\ge 2$ and $m_1\ge m_2$, the digraph $\widetilde{\Theta}(\mathbf{\zeta}^{m-2t+1}_s,\mathbf{\zeta}_t)$ is the unique digraph with  the maximal adjacency spectral radius and signless Laplacian spectral radius among $\prescript{s}{t}{\widetilde{\boldsymbol\Theta}(m)}$,  respectively.
	In the following,  we will show that the above conclusions for $\alpha$-spectral radius also hold.

	\begin{thm}\label{thm2}Take $s\ge t\ge 1$, $s+t=k$, $m \geq 2k-2$ and $m'=m-2t+1$. Then the following hold.
		\begin{enumerate}[(1).]
			\item	$\widetilde{\Theta}(\mathbf{\zeta}^{m'}_s,\mathbf{\zeta}_t)$ is the unique digraph with the maximal $\alpha$-spectral radius among $\prescript{s}{t}{\widetilde{\boldsymbol\Theta}(m)}$.
			\item When $t=1$, $\widetilde{\Theta}(\mathbf{\zeta}_t)$  is the unique digraph with the minimal $\alpha$-spectral radius among $\prescript{s}{t}{\widetilde{\boldsymbol\Theta}(m)}$.
		\end{enumerate}
	\end{thm}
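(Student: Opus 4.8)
The plan is to reduce Theorem~\ref{thm2} to an extremal problem for the single quantity $F(\mathbf{K}_1,\mathbf{K}_2)=\left(\sum_{i\in\mathbf{K}_1}d^{i}\right)\left(\sum_{j\in\mathbf{K}_2}d^{j}\right)$ (with $d=\frac{1-\alpha}{x-\alpha}$ as usual). By Corollary~\ref{cors1}, every digraph in $\prescript{s}{t}{\widetilde{\boldsymbol\Theta}(m)}$ has the same number of vertices $n=m-k+2$ and characteristic polynomial $(x-\alpha)^{n-2}(x-\alpha s)(x-\alpha t)-(x-\alpha)^{n}F(\mathbf{K}_1,\mathbf{K}_2)$, so the difference of the characteristic polynomials of two such digraphs is $(x-\alpha)^{n}$ times the difference of the corresponding values of $F$; as in the proof of Lemma~\ref{lems111}, a strictly larger value of $F$ for $x>1$ forces a strictly larger $\alpha$-spectral radius. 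Hence it suffices to show that, over all admissible splits $m_1+m_2=m$ with $\mathbf{K}_1\in\widetilde{S}_{m_1}^{s}$ and $\mathbf{K}_2\in\widetilde{S}_{m_2}^{t}$, $F$ is maximized uniquely by the pair giving $\widetilde{\Theta}(\mathbf{\zeta}^{m'}_{s},\mathbf{\zeta}_{t})$ for part~(1), and (when $t=1$, which forces $\mathbf{K}_2=(m_2)$) minimized uniquely by the digraph of part~(2). The first move is to fix the split: then the two factors of $F$ are chosen independently, and since $\sum_{i\in\mathbf{K}_1}d^{i}$ is strictly increasing along the majorization order on $\widetilde{S}_{m_1}^{s}$ (Lemma~\ref{lems1}, Theorem~\ref{thmd8}), over a fixed split $F$ is maximized exactly at the pair of majorization-maxima $(\mathbf{\zeta}^{m_1}_{s},\mathbf{\zeta}^{m_2}_{t})$ and minimized exactly at the pair of majorization-minima $(\mathbf{x}_{\ast}(\widetilde{S}^{s}_{m_1}),\mathbf{x}_{\ast}(\widetilde{S}^{t}_{m_2}))$. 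It remains to optimize over the split.

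For part~(1) I may assume $s\ge2$ (if $s=1$ then $k=2$ and the class is the single directed cycle $\overrightarrow{C}_m$). Suppose first $t\ge2$. Put $A=(s-2)d^{2}+d$, $B=(t-2)d^{2}+d$, $C=m-2k+6$ (so $C\ge4$), and $u=m_{1}-2s+3\in\{2,\dots,C-2\}$; a short computation gives $F(\mathbf{\zeta}^{m_1}_{s},\mathbf{\zeta}^{m_2}_{t})=d^{C}+AB+h(u)$ with $h(u)=Bd^{u}+Ad^{C-u}$. Since $0<d<1$ and $A,B>0$, $h$ is strictly convex, so it attains its maximum on $[2,C-2]$ at an endpoint, and $h(C-2)-h(2)=(A-B)(d^{2}-d^{C-2})=(s-t)d^{2}(d^{2}-d^{C-2})\ge0$. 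Therefore the maximum is at $u=C-2$, i.e. $m_{2}=2t-1$, $\mathbf{K}_{2}=\mathbf{\zeta}_{t}$, $\mathbf{K}_{1}=\mathbf{\zeta}^{m'}_{s}$; in the borderline cases $s=t$ or $m=2k-2$, when $h(2)=h(C-2)$, the two endpoint configurations give isomorphic digraphs, so $\widetilde{\Theta}(\mathbf{\zeta}^{m'}_{s},\mathbf{\zeta}_{t})$ is in all cases the unique maximizing digraph. The subcase $t=1$ is easier: there $F(\mathbf{\zeta}^{m_1}_{s},(m_2))=d^{m-2s+3}+Ad^{m_2}$ is strictly decreasing in $m_2$, so the maximum is again at $m_{2}=1=2t-1$.

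For part~(2) take $t=1$ (and $s\ge2$, the case $s=1$ again being trivial); then $\mathbf{K}_{2}=(m_{2})$. Writing $\sigma=\sum\mathbf{K}_{1}\in\{2s-1,\dots,m-1\}$ and $\Phi(\sigma)=\sum_{i\in\mathbf{x}_{\ast}(\widetilde{S}_{\sigma}^{s})}d^{i}$, the split-optimization above reduces the problem to showing $d^{-\sigma}\Phi(\sigma)$ is strictly increasing in $\sigma$, i.e. $\Phi(\sigma)>d\,\Phi(\sigma-1)$ for $\sigma\ge2s$; then the unique minimizer is $\sigma=2s-1$, $\mathbf{K}_{1}=\mathbf{x}_{\ast}(\widetilde{S}^{s}_{2s-1})=(2,\dots,2,1)$, $\mathbf{K}_{2}=(m-2s+1)$, which is the digraph named in part~(2). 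Using the explicit form of $\mathbf{x}_{\ast}(\widetilde{S}_{\sigma}^{s})$: if $s\mid\sigma$ then $\Phi(\sigma)-d\,\Phi(\sigma-1)=(s-1)d^{\sigma/s}(1-d)>0$; otherwise, with $\sigma=qs+r$, $1\le r<s$, it equals $d^{q}$ times a downward-opening quadratic in $d$ that vanishes at $d=1$, is positive at $d=0$, and has negative other root, hence is positive on $(0,1)$. I expect this over-the-split step to be the main obstacle: in part~(1) the key point is to see that, after fixing split-optimal representatives, the dependence on the split is through the single strictly convex function $h(u)$ of one real variable, together with the sign computation $h(C-2)-h(2)\ge0$; in part~(2) it is the monotonicity inequality $\Phi(\sigma)>d\,\Phi(\sigma-1)$, which genuinely requires the exact shape of the balanced partition $\mathbf{x}_{\ast}(\widetilde{S}_{\sigma}^{s})$ and a short polynomial-positivity argument.
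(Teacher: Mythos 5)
Your proposal is correct, and its skeleton coincides with the paper's: both reduce the problem to comparing the scalar $F(\mathbf{K}_1,\mathbf{K}_2)$ via Corollary \ref{cors1} and Lemma \ref{lems111}, use the majorization results (Lemma \ref{lems1}, Theorem \ref{thmd8}) to pass to the candidates $(\mathbf{\zeta}^{m_1}_{s},\mathbf{\zeta}^{m_2}_{t})$ within each fixed split, and then optimize over the split. For part (1) the two split-comparisons are the same computation in different clothing: the paper writes the difference of $F$-values directly as $(pd^2-qd^{\,m-2(k-2)-l_1})(1-d^{l_1})$ and reads off its sign, whereas you package the split-dependence into the strictly convex function $h(u)=Bd^{u}+Ad^{C-u}$ and compare endpoints. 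Your version is in fact a little more careful: you treat $t=1$ separately (the paper's substitution $q=(t-2)d^2+d$ does not describe $\sum_{j\in\mathbf{K}_2}d^{j}$ when $t=1$), and you note that for $s=t$ the paper's claimed strict inequality degenerates to an equality between two isomorphic digraphs, which you resolve explicitly. For part (2) you genuinely diverge: you first balance $\mathbf{K}_1$ within each split and then prove the monotonicity $\Phi(\sigma)>d\,\Phi(\sigma-1)$, which requires the explicit shape of $\mathbf{x}_{\ast}(\widetilde{S}^{s}_{\sigma})$ and a small case analysis on $\sigma \bmod s$ (your computations there check out, though for $r=1$ the "quadratic" degenerates to a linear factor). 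The paper instead uses a single local exchange: if $\mathbf{K}_1\neq\mathbf{\zeta}_{k-1}$, decrement one entry of $\mathbf{K}_1$ and lengthen the return path, which changes $F$ by exactly $p(d^{m_2}-d^{m_2+1})>0$ and so strictly improves any non-extremal candidate without ever invoking balanced partitions. Both routes are valid; the paper's part (2) is leaner, while your two-stage argument costs more bookkeeping but makes the structure of the optimum (balance, then shortest $\mathbf{K}_1$-side) more transparent.
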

	\begin{proof}       
		\noindent 
		Suppose $\widetilde{\Theta}(\mathbf{K}_1,\mathbf{K}_2)$ is the digraph with the maximal $\alpha$-spectral radius among $\prescript{s}{t}{\widetilde{\boldsymbol\Theta}(m)}$. By Theorem \ref{thmd8}, $\mathbf{K}_1=\mathbf{\zeta}^{m_1}_{s},\mathbf{K}_2=\mathbf{\zeta}^{m_2}_{t}$ for some $m_1,m_2$ with $m_1+m_2=m$. Assume that $m_2-(2t-1)=l_1>0$. Set $p=(s-2)d^2+d$, $q=(t-2)d^2+d$ and $k=s+t$, we have 
		\begin{align*}
		&F(\mathbf{\zeta}^{m-2t+1}_s,\mathbf{\zeta}_t)-F(\mathbf{K}_1,\mathbf{K}_2)\\
		=&
		(d^{m-2(k-2)}+p)(d^{2}+q)-(d^{m-2(k-2)-l_1}+p)(d^{2+l_1}+q)\\
		=&(pd^2-qd^{m-2(k-2)-l_1})(1-d^{l_1}).
		\end{align*}
		Since $s \geq t$,  $p \geq q$.  From $m-2(k-2)-l_1\geq 2$, we have $$F(\mathbf{\zeta}^{m-2t+1}_s,\mathbf{\zeta}_t)>F(\mathbf{K}_1,\mathbf{K}_2).$$
		From Lemma \ref{lems111}, we have $$\rho_{\alpha}(\widetilde{\Theta}(\mathbf{\zeta}^{m-2t+1}_s,\mathbf{\zeta}_t))>\rho_{\alpha}(\widetilde{\Theta}(\mathbf{K}_1,\mathbf{K}_2)),$$ 
		a contradiction with assumption. Hence, $m_2=2t-1$ and the conclusion of (1) follows.

		(2) Suppose $\widetilde{\Theta}(\mathbf{K}_1,\mathbf{K}_2)$ is the digraph with the minimal $\alpha$-spectral radius among $\widetilde{\boldsymbol{\Theta}}_k^1(m)$. Assuming $\mathbf{K}_1 \neq \mathbf{\zeta}_{k-1}$, then we have $a_1\geq 3$. Take $\mathbf{K}_1^{''}=(a_1-1,a_2,\dots,a_{k-1}) \in \widetilde{S}_{m_1-1}^{k-1}$,
		$\mathbf{K}_2^{''}=(m_2+1)$.  We have
		\begin{align*}
		F(\mathbf{K}_1,\mathbf{K}_2)-F(\mathbf{K}_1^{''},\mathbf{K}_2^{''})=&(p+d^{a_1})d^{m_2}-(p+d^{a_1-1})d^{m_2+1}\\
		=&p(d^{m_2}-d^{m_2+1})>0.
		\end{align*}
		From Lemma \ref{lems111}, we have  $\rho_{\alpha}(\widetilde{\Theta}_m(\mathbf{K}_1))>\rho_{\alpha}(\widetilde{\Theta}_m(\mathbf{\zeta}_{k-1}))$, a contradiction with assumption.
		The assertion of (2) holds.
	\end{proof}
	
	\begin{remark}
		For $t \geq 2$,  by numerical computation (see Table 2, one can see that  the digraph with minimal $\alpha$-spectral radius among $\prescript{s}{t}{\widetilde{\boldsymbol\Theta}(m)}$ is related to the value of $\alpha$.
	\end{remark}
	\begin{table}\label{table2}
		\tabulinestyle{on 2pt gray}
		\begin{tabu} to 0.5\textwidth {|X[0.5,c,m]|X[c,m]|X[4,c,m]|}
			\tabucline - &	\multicolumn{2}{|c|}{$\alpha=0.6$}\\\tabucline -
			i & $\rho_\alpha(G)$ & ($K_1,K_2$)\\\tabucline -	
			1&  2.424 & ((2, 2, 2, 2), (4, 3, 3))\\\tabucline - 
			2 & 2.426 & ((3, 2, 2, 2), (3, 3, 3))\\\tabucline - 
			3 & 2.429 & ((2, 2, 2, 1), (4, 4, 3))\\\tabucline - 
			4 & 2.4317 & ((3, 3, 3, 3), (2, 2, 2))\\\tabucline - 
		\end{tabu}
		\begin{tabu} to 0.48\textwidth {|X[c,m]|X[4,c,m]|}
			\tabucline -	
			\multicolumn{2}{|c|}{$\alpha=0.2$} \\\tabucline -
			$\rho_\alpha(G)$ & ($K_1,K_2$)\\\tabucline -			
			1.6971 & ((2, 2, 2, 1), (4, 4, 3))\\\tabucline - 
			1.6985 & ((2, 2, 2, 2), (4, 3, 3))\\\tabucline - 
			1.7095 & ((3, 2, 2, 2), (3, 3, 3))\\\tabucline - 
			1.7227 & ((2, 2, 2, 1), (5, 3, 3))\\\tabucline - 
		\end{tabu}
		\caption{Digraphs with the first four smallest $\alpha$-spectral radius in $\prescript{4}{3}{\widetilde{\boldsymbol\Theta}(18)}$ for  $\alpha=0.6, 0.2$.}
	\end{table}

	From $(1)$ of Theorem \ref{thm2},  we can immediately get the following corollary,  which is Theorem $3.1$ in \cite{MR3872980}.
	\begin{cor}
		$\widetilde{\Theta}(m-2, 1;1)$ and $\widetilde{\Theta}(2, 1;m-3)$ are the unique digraphs with the maximal and minimal $\alpha$-spectral radius 
		among all $\widetilde{\Theta}(k_1, k_2;l_1)$-digraphs of size $m$, respectively.
	\end{cor}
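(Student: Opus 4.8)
The plan is to obtain this corollary as the special case $s=2$, $t=1$ (hence $k=s+t=3$) of Theorem~\ref{thm2}. The first step is to observe that the class of all $\widetilde{\Theta}(k_1,k_2;l_1)$-digraphs of size $m$ is, by definition, exactly $\prescript{2}{1}{\widetilde{\boldsymbol\Theta}(m)}=\widetilde{\boldsymbol\Theta}_3^1(m)$: such a digraph is built from $s=2$ internally-disjoint directed paths of lengths $k_1\ge k_2\ge 1$ (with $k_1\ge 2$, as forced by the condition $x_{s-1}\ge 2$ defining $\widetilde{S}_{m_1}^{2}$) from $u$ to $v$, together with $t=1$ directed path of length $l_1\ge 1$ from $v$ to $u$. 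The hypothesis $m\ge 2k-2=4$ of Theorem~\ref{thm2} is automatic, since $m=k_1+k_2+l_1\ge 2+1+1=4$.

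For the maximal part I would invoke item~(1) of Theorem~\ref{thm2}: the unique digraph attaining the maximal $\alpha$-spectral radius in $\prescript{2}{1}{\widetilde{\boldsymbol\Theta}(m)}$ is $\widetilde{\Theta}(\mathbf{\zeta}^{m'}_s,\mathbf{\zeta}_t)$ with $m'=m-2t+1=m-1$. Substituting $t=2$ in $\mathbf{\zeta}^{m}_t=(m-2t+3,\overbrace{2,\ldots,2}^{t-2},1)$ and then replacing $m$ by $m'=m-1$ gives $\mathbf{\zeta}^{m-1}_2=(m-2,1)$; substituting $t=1$ in $\mathbf{\zeta}_t=(\overbrace{2,\ldots,2}^{t-1},1)$ gives $\mathbf{\zeta}_1=(1)$. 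Hence the extremal digraph is $\widetilde{\Theta}((m-2,1),(1))=\widetilde{\Theta}(m-2,1;1)$, and it is unique by Theorem~\ref{thm2}(1).

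For the minimal part I would invoke item~(2) of Theorem~\ref{thm2}, which is applicable precisely because $t=1$: the unique digraph with minimal $\alpha$-spectral radius is $\widetilde{\Theta}_m(\mathbf{\zeta}_{k-1})$. With $k=3$ we have $\mathbf{\zeta}_{k-1}=\mathbf{\zeta}_2=(2,1)$, and the notation $\widetilde{\Theta}_m(\mathbf{K}_1)$ refers to the case in which the second partition has a single entry, namely $\mathbf{K}_2=(m-3)$, so the extremal digraph is $\widetilde{\Theta}((2,1),(m-3))=\widetilde{\Theta}(2,1;m-3)$, unique by Theorem~\ref{thm2}(2). Together with the previous paragraph this proves the corollary, which is Theorem~3.1 of~\cite{MR3872980}.

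There is essentially no mathematical obstacle here, since the statement is a pure specialization; the only care required is in the \emph{translation dictionary} between the vector/partition notation of Theorems~\ref{thmd8}--\ref{thm2} and the classical $\widetilde{\Theta}(k_1,k_2;l_1)$ notation. Concretely, one must correctly read off $\mathbf{\zeta}^{m-1}_2=(m-2,1)$, $\mathbf{\zeta}_1=(1)$ and $\mathbf{\zeta}_2=(2,1)$ from the displayed formulas (keeping track of the shift from $m$ to $m'=m-1$ in the maximal case), and remember that $\widetilde{\Theta}_m(\cdot)$ suppresses a one-term second partition of size $m$. It is also worth verifying the trivial edge cases that the defining partitions $(m-2,1)\in\widetilde{S}^2_{m-1}$ and $(2,1)\in\widetilde{S}^2_{3}$ lie in the appropriate $\widetilde{S}$-sets, which holds for every $m\ge 4$.
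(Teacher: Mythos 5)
Your proposal is correct and is exactly the specialization the paper intends: the class of $\widetilde{\Theta}(k_1,k_2;l_1)$-digraphs of size $m$ is $\prescript{2}{1}{\widetilde{\boldsymbol\Theta}(m)}$, and reading off $\mathbf{\zeta}^{m-1}_2=(m-2,1)$, $\mathbf{\zeta}_1=(1)$ and $\mathbf{\zeta}_2=(2,1)$ from Theorem \ref{thm2} gives both extremal digraphs (the paper offers no further argument). Your only genuine addition is the careful bookkeeping, including the observation that the minimal half actually rests on part (2) of Theorem \ref{thm2} (whose statement should read $\widetilde{\Theta}_m(\mathbf{\zeta}_{k-1})$, as its proof shows), not on part (1) alone as the paper's attribution suggests.
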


	In the following part of this section, we will show that the patterns of digraphs with the minimal and maximal $\alpha$-spectral radius among $\widetilde{\boldsymbol{\Theta}}_k(m)$ are related with the value of $\alpha$.

	\begin{thm}Let $k \geq 3$ and $m\geq 2k-2$. $s'=\lceil \frac{k}{2} \rceil, t'=\lfloor \frac{k}{2} \rfloor$. Then there exists $\delta>0$ such that the following hold.
		\begin{enumerate}[(1).]
			\item For $\alpha \in [0, \delta)$, $\widetilde{\Theta}_m(\mathbf{\zeta}^{m-2t'+1}_{s'}, \mathbf{\zeta}_{t'})$
			is the unique digraphs with the maximal $\alpha$-spectral radius among $\widetilde{\boldsymbol{\Theta}}_k(m)$.
			\item For $\alpha \in (1-\delta, 1)$, $\widetilde{\Theta}_m(\mathbf{\zeta}_{k-1}^{m-1})$
			is the unique digraph with the maximal  $\alpha$-spectral radius among $\widetilde{\boldsymbol{\Theta}}_k(m)$.
		\end{enumerate}
		
	\end{thm}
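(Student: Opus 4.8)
\emph{Plan.} Since $\widetilde{\boldsymbol\Theta}_k(m)$ is the union over $s+t=k$ of the classes $\prescript{s}{t}{\widetilde{\boldsymbol\Theta}(m)}$, and since $\widetilde{\Theta}(\mathbf{K}_1,\mathbf{K}_2)\cong\widetilde{\Theta}(\mathbf{K}_2,\mathbf{K}_1)$ (relabel $u\leftrightarrow v$), we may always assume $s\ge t$. By Theorem \ref{thm2}(1) the unique $\rho_\alpha$-maximiser inside $\prescript{k-t}{t}{\widetilde{\boldsymbol\Theta}(m)}$ is $D_t:=\widetilde{\Theta}\bigl(\mathbf{\zeta}^{m-2t+1}_{k-t},\mathbf{\zeta}_t\bigr)$ for $1\le t\le t'=\lfloor k/2\rfloor$; here $D_{t'}$ is the digraph in part (1) and $D_1$ the one in part (2), and for $k=3$ one has $t'=1$, so both assertions reduce to Theorem \ref{thm2}(1). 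Thus it suffices to compare the finitely many values $\rho_\alpha(D_1),\dots,\rho_\alpha(D_{t'})$. Put $c:=m-2k+4\ (\ge 2)$ and $d=\frac{1-\alpha}{x-\alpha}$. By Corollary \ref{cors1}, after dividing out $(x-\alpha)^{n-2}>0$, the radius $\rho_\alpha(D_t)$ is the largest root of
\[
(x-\alpha(k-t))(x-\alpha t)=(x-\alpha)^2F_t(d),\qquad F_t(d)=d^2\bigl(d^{c-1}+(k-t-2)d+1\bigr)\bigl((t-1)d+1\bigr).
\]
The strategy is to establish a strict inequality at the endpoint value of $\alpha$ in each regime and then extend it by continuity of $\alpha\mapsto\rho_\alpha(D_t)$ on $[0,1)$ (the Perron eigenvalue of the irreducible nonnegative matrix $A_\alpha(D_t)$ is simple, and the spectral radius varies continuously with $\alpha$).

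\emph{Regime $\alpha$ near $0$.} At $\alpha=0$ the equation becomes $F_t(1/x)=1$; as $F_t(1/x)$ is strictly decreasing in $x>0$ with range $(0,+\infty)$, and $\rho_0(D_t)>1$ by Lemma \ref{lemc2}(iii), the number $\rho_0(D_t)$ is its unique solution. Hence $\rho_0(D_{t'})>\rho_0(D_t)$ for $t<t'$ follows once $F_{t'}(d)>F_t(d)$ for all $d\in(0,1)$, and a direct expansion collapses the difference to
\[
F_{t'}(d)-F_t(d)=(t'-t)\,d^{4}\bigl(d^{c-2}+k-1-t-t'\bigr),
\]
which is positive on $(0,1)$ because $d^{c-2}>0$ and $t+t'\le (t'-1)+t'=2\lfloor k/2\rfloor-1\le k-1$. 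By continuity the strict inequalities $\rho_0(D_{t'})>\rho_0(D_t)$, $1\le t<t'$, persist on some interval $[0,\delta_1)$, which together with the uniqueness in Theorem \ref{thm2}(1) proves (1).

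\emph{Regime $\alpha$ near $1$.} As $\alpha\to1^-$ we have $A_\alpha(D_t)\to D(D_t)=\diag{k-t,t,1,\dots,1}$, so $\rho_\alpha(D_t)\to\rho\bigl(D(D_t)\bigr)=k-t$. Therefore, for $k\ge4$ and $2\le t\le t'$,
\[
\lim_{\alpha\to1^-}\bigl(\rho_\alpha(D_1)-\rho_\alpha(D_t)\bigr)=(k-1)-(k-t)=t-1\ge1>0,
\]
so $\rho_\alpha(D_1)>\rho_\alpha(D_t)$ on some interval $(1-\delta_2,1)$; with Theorem \ref{thm2}(1) this gives (2). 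Taking $\delta=\min(\delta_1,\delta_2)$ finishes the proof.

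\emph{The main obstacle.} The only genuinely non-routine step is the $\alpha=0$ comparison, i.e.\ the pointwise inequality $F_{t'}(d)>F_t(d)$ on $(0,1)$: after the expansion it amounts to showing that the concave function $b\mapsto b\bigl(k-3+d^{c-2}-b\bigr)$ is \emph{strictly} maximised over $b\in\{0,1,\dots,\lfloor k/2\rfloor-1\}$ at $b=\lfloor k/2\rfloor-1$, uniformly for $d\in(0,1)$ and for both parities of $k$; the borderline case $c=2$ (where $d^{c-2}\equiv 1$) must be checked separately. Once this inequality is in hand, the remainder is routine bookkeeping with Corollary \ref{cors1}, Theorem \ref{thm2}(1), and the continuity of $\rho_\alpha$.
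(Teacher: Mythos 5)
Your proof is correct. For part (1) it is essentially the paper's route: evaluate at $\alpha=0$, where by Corollary \ref{cors1} the cross term $(x-\alpha)^{n-2}\alpha^2(s't'-st)$ vanishes and the comparison of the class-wise maximizers $D_t=\widetilde{\Theta}(\mathbf{\zeta}^{m-2t+1}_{k-t},\mathbf{\zeta}_t)$ reduces to comparing the quantities $F$, then extend by continuity; the paper disposes of the key inequality with the phrase ``by directly calculation,'' and your closed form $F_{t'}(d)-F_t(d)=(t'-t)\,d^4\bigl(d^{c-2}+k-1-t-t'\bigr)$ together with $t+t'\le 2\lfloor k/2\rfloor-1\le k-1$ is precisely the computation that is omitted there (and checks out; the case $c=2$ needs no separate treatment since $d^{0}=1>0$). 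Your explicit reduction to the finite family $D_1,\dots,D_{t'}$ via Theorem \ref{thm2}(1) is also cleaner than the paper's implicit version. For part (2) you genuinely diverge: the paper argues that as $\alpha\to 1^-$ the sign of $\phi_\alpha(\widetilde{\Theta}(\mathbf{K}'_1,\mathbf{K}'_2))-\phi_\alpha(\widetilde{\Theta}(\mathbf{K}_1,\mathbf{K}_2))$ is governed by the term $\alpha^2(s't'-st)$ because $F$ is built from powers of $d=\frac{1-\alpha}{x-\alpha}$, and claims this ``for any $x>1$''---a uniformity that is actually delicate near $x=1$, where $d$ does not tend to $0$. Your argument, passing to the limit $A_\alpha(D_t)\to D(D_t)$ and using $\rho_\alpha(D_t)\to\Delta^+(D_t)=k-t$, sidesteps that issue entirely and settles the comparison by a strict gap of limits; it is both simpler and more robust, at the cost of leaving the characteristic-polynomial framework the rest of the section works in. Both arguments are valid, but yours is the tighter one for part (2).
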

	\begin{proof} 
		Take $\mathbf{K}_1 \in \widetilde{S}_{m_1}^{s}, \mathbf{K}'_1 \in \widetilde{S}_{m'_1}^{s'}$ and $\mathbf{K}_2\in \widetilde{S}_{m_2}^{t}, \mathbf{K}'_2\in \widetilde{S}_{m'_2}^{t'}$.
		By Corollary \ref{cors1}, we have
		\begin{align*}
		&\phi_{\alpha}(\widetilde{\Theta}(\mathbf{K}'_1,\mathbf{K}'_2))-\phi_{\alpha}(\widetilde{\Theta}(\mathbf{K}_1,\mathbf{K}_2)) \\
		= & (x-\alpha)^{n-2}\alpha^2(s't'-st)
		+(x-\alpha)^{n}(F(\mathbf{K}_1,\mathbf{K}_2)-F(\mathbf{K}'_1,\mathbf{K}'_2)).
		\end{align*}
		Since $d=\frac{1-\alpha}{x-\alpha}$, for $x>1$, there exists $\delta>0$ such that 
		$\phi_{\alpha}(\widetilde{\Theta}(\mathbf{K}'_1,\mathbf{K}'_2))-\phi_{\alpha}(\widetilde{\Theta}(\mathbf{K}_1,\mathbf{K}_2))$ and $F(\mathbf{K}_1,\mathbf{K}_2)-F(\mathbf{K}'_1,\mathbf{K}'_2)$ have the same sign when $\alpha \in [0, \delta)$, $\phi_{\alpha}(\widetilde{\Theta}(\mathbf{K}'_1,\mathbf{K}'_2))-\phi_{\alpha}(\widetilde{\Theta}(\mathbf{K}_1,\mathbf{K}_2))$ and $s't'-st$ have the same sign when $\alpha \in (1-\delta, 1)$. 
		
		\noindent (1). When $\alpha=0$ and $s\geq t$, by directly calculation, one can see that $F(\mathbf{\zeta}^{m-2t+1}_{s},\mathbf{\zeta}_{t})>F(\mathbf{\zeta}^{m-2t+3}_{s+1},\mathbf{\zeta}_{t-1})$. Since $\phi_{\alpha}(\widetilde{\Theta}(\mathbf{K}_1,\mathbf{K}_2))$ is continuous  about $\alpha$, there exists $\delta>0$ such that $\phi_{\alpha}(\mathbf{\zeta}^{m-2t+1}_{s},\mathbf{\zeta}_{t})<\phi_{\alpha}(\mathbf{\zeta}^{m-2t+3}_{s+1},\mathbf{\zeta}_{t-1})$ holds for $\alpha \in [0, \delta)$.
		This implies that $\widetilde{\Theta}(\mathbf{\zeta}^{m-2t'+1}_{s'},\mathbf{\zeta}_{t'})$ is the unique digraph with the maximal $\alpha$-spectral radius among $\widetilde{\boldsymbol{\Theta}}_k(m)$ for $\alpha \in [0, \delta)$. 
		
		\noindent (2).  Take $s \geq t \geq 2$. For any  $\mathbf{K}_1\in \widetilde{S}_{m_1}^{k-1},\mathbf{K}_2\in \widetilde{S}_{m_2}^{1}$ and $\mathbf{K}'_1\in \widetilde{S}_{m'_1}^{s},\mathbf{K}'_2\in \widetilde{S}_{m'_2}^{t}$ with $m_1+m_2=m'_1+m'_2$, there always exists some $\delta>0$ such that 
		$\phi_{\alpha}(\widetilde{\Theta}(\mathbf{K}'_1,\mathbf{K}'_2))>\phi_{\alpha}(\widetilde{\Theta}(\mathbf{K}_1,\mathbf{K}_2))$ for any $x>1$ and $\alpha \in (1-\delta,1)$. Hence,  $\rho_\alpha(\widetilde{\Theta}(\mathbf{K}_1,\mathbf{K}_2))>\rho_\alpha(\widetilde{\Theta}(\mathbf{K}'_1,\mathbf{K}'_2))$. By $(1)$ of Theorem \ref{thm2}, we have $\widetilde{\Theta}_m(\mathbf{\zeta}_{k-1}^{m-1})$
		is the unique digraph with the maximal  $\alpha$-spectral radius among $\widetilde{\boldsymbol{\Theta}}_k(m)$. Therefore the proof is finished.
	\end{proof}
	
	Note that   there also exists some $\delta>0$ such that  $\widetilde{\Theta}^1_m(\mathbf{\zeta}_{k-1})$
	is the unique digraph with the minimal $\alpha$-spectral radius among $\widetilde{\boldsymbol{\Theta}}_k(m)$ for $\alpha \in [0, \delta)$ by $(1)$ of Theorem \ref{thm2}.

	In \cite{MR2954483} and \cite{MR3777498},  Guo et al. and Li et al. showed that among $\mathcal{R}_{m}^k$ and $\widetilde{\boldsymbol{\Theta}}_k(m)$,  the digraph $\Rose(\mathbf{\gamma}_1)$ is the
	unique digraph with  the maximal adjacency spectral radius and signless Laplacian spectral radius. And the digraph
	$\widetilde{\Theta}_m(\mathbf{\zeta}_{k-1})$ is the unique digraph with  the minimal adjacency spectral radius and signless Laplacian spectral radius among $\mathcal{R}_{m}^k \cup \widetilde{\boldsymbol{\Theta}}_k^1(m)$. In the following,  we will show that the above conclusions for $\alpha$-spectral radius also hold.

	\begin{thm}\label{thm3}\label{thml3}
		Let $G$ be a simple digraph on $n$ vertices, $u,v,w$ be distinct vertices of $V(G)$, $(u,v)\in E(G)$ and $\boldsymbol{x}=(x_1,x_2,\dots,x_n)^T$ be the $\alpha$-Perron vector of $G$, where $x_i$ corresponds to the vertex $i$. Let $G'=G-E_u^{-}+E_v^{-}$ with $E_i^{-}=\{(w,i)|w\in N_{u}^- \}$. If $x_v\ge x_u$, then $\rho_{\alpha}(G')\ge \rho_{\alpha}(G)$.
	\end{thm}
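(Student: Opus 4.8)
The plan is to use the standard Rayleigh-quotient / eigenvector-transfer argument adapted to the non-symmetric matrix $A_\alpha(G)$. Since $A_\alpha(G)$ is not symmetric, $\rho_\alpha(G)$ is not given by a clean variational formula, so I would instead work with the characterization in part (iv) of Lemma~\ref{lemc2}: if $A_\alpha(G')\boldsymbol{y}\ge \lambda\boldsymbol{y}$ for some positive vector $\boldsymbol{y}$, then $\rho_\alpha(G')\ge\lambda$. The idea is to test $A_\alpha(G')$ against the $\alpha$-Perron vector $\boldsymbol{x}$ of $G$ itself (which is positive since $G$ is strongly connected) and show $A_\alpha(G')\boldsymbol{x}\ge \rho_\alpha(G)\,\boldsymbol{x}$ coordinatewise, with the inequality strict somewhere; then part (ii) or (iv) of Lemma~\ref{lemc2} gives $\rho_\alpha(G')\ge\rho_\alpha(G)$ (strict when $G'$ is strongly connected, though the statement only claims $\ge$).

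First I would compare $A_\alpha(G)$ and $A_\alpha(G')$ row by row. The operation $G'=G-E_u^-+E_v^-$ redirects every arc $(w,u)$ (with $w\in N_u^-$) to become $(w,v)$; this changes the out-neighbourhoods of the vertices $w\in N_u^-$ (each loses $u$, gains $v$), and it changes the indegrees of $u$ and $v$, but crucially it does \emph{not} change any outdegree $d_w^+$, so the diagonal matrix $D(G')=D(G)$ and hence $A_\alpha(G')=\alpha D(G)+(1-\alpha)A(G')$. So I only need to track the adjacency part. For a vertex $w\in N_u^-$, row $w$ of $A(G')\boldsymbol{x}$ differs from that of $A(G)\boldsymbol{x}$ exactly by $x_v-x_u\ge 0$; for every other vertex the row is unchanged. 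Therefore, writing $\lambda=\rho_\alpha(G)$ and using $A_\alpha(G)\boldsymbol{x}=\lambda\boldsymbol{x}$,
\begin{align*}
\bigl(A_\alpha(G')\boldsymbol{x}\bigr)_w &= \lambda x_w + (1-\alpha)(x_v-x_u)\ \ge\ \lambda x_w \qquad (w\in N_u^-),\\
\bigl(A_\alpha(G')\boldsymbol{x}\bigr)_i &= \lambda x_i \qquad (i\notin N_u^-).
\end{align*}
Thus $A_\alpha(G')\boldsymbol{x}\ge\lambda\boldsymbol{x}$ entrywise. By part (iv) of Lemma~\ref{lemc2} applied to $A_\alpha(G')$ (note $N_u^-\ne\varnothing$ because $(w,u)$-type arcs exist; e.g.\ using that $G$ is presumably strongly connected, or at least has an arc into $u$), we conclude $\rho_\alpha(G')\ge\lambda=\rho_\alpha(G)$.

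The main subtlety I expect is bookkeeping about whether $A_\alpha(G')$ is still irreducible (i.e.\ $G'$ strongly connected) and whether the arc $(u,v)\in E(G)$ hypothesis is actually needed for the inequality $\rho_\alpha(G')\ge\rho_\alpha(G)$ or only for the strict version / for $G'$ remaining a digraph of the intended family. For the weak inequality as stated, the argument above does not even seem to use $(u,v)\in E(G)$ nor $w\ne v,u$ beyond making $G'$ a simple digraph; I would mention that the role of $(u,v)\in E(G)$ and $w\notin\{u,v\}$ is to guarantee $G'$ is again a simple digraph (no multi-arcs created, since $w\ne v$ means $(w,v)$ was not already forced, and handling the possible case $u\in N_u^-$... which cannot happen as $G$ is loopless). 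A secondary point worth a sentence: one should check no arc $(w,v)$ already existed for $w\in N_u^-$ — if it did, $G'$ would not be simple; so implicitly $N_u^-\cap N_v^-=\varnothing$ is part of the setup, or one interprets the multiset union appropriately. I would state the clean coordinatewise inequality, invoke Lemma~\ref{lemc2}(iv) (and (ii) for strictness when $G'$ is strongly connected), and remark that equality forces $x_v=x_u$ on all redirected arcs, which combined with the Perron eigenvector equations can be pushed to $G'\cong G$ in the situations where the theorem is later applied.
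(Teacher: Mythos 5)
Your proposal is correct and follows essentially the same route as the paper: test $A_\alpha(G')$ against the $\alpha$-Perron vector $\boldsymbol{x}$ of $G$, observe that only the rows indexed by $N_u^-$ change and each gains $(1-\alpha)(x_v-x_u)\ge 0$, and conclude via Lemma~\ref{lemc2}(iv). Your additional remarks on outdegrees being preserved and on the simplicity of $G'$ are sensible bookkeeping that the paper leaves implicit.
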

	\begin{proof}
		
		First, we show $(A_{\alpha}(G')\boldsymbol{x})_w\ge (A_{\alpha}(G)\boldsymbol{x})_w$ for any $w\in V(G')=V(G)$.
		By the eigenequations of $A_\alpha(G)$ for $\rho_\alpha(G)$ and the relationship between $G$ and $G'$, the following conclusions hold:\\
		$(A_{\alpha}(G')\boldsymbol{x})_w=(A_{\alpha}(G)\boldsymbol{x})_w=\rho_{\alpha}(G)x_w$ for $w\notin N_u^{-}$  and \\
		$(A_{\alpha}(G')\boldsymbol{x})_w-(A_{\alpha}(G)\boldsymbol{x})_w=\rho_{\alpha}(G)x_w=(1-\alpha)(x_v-x_u)\ge 0$ for $w\in N_u^{-}$.
		
		Thus, $\rho_{\alpha}(G')\boldsymbol{x}=A_{\alpha}(G')\boldsymbol{x}\ge A_{\alpha}(G)\boldsymbol{x}=\rho_{\alpha}(G)\boldsymbol{x}$. From $(iv)$ of Lemma \ref{lemc2}, $\rho_{\alpha}(G')\ge \rho_{\alpha}(G)$ holds.
	\end{proof}

	For digraphs, the following  fact is obvious.
	\begin{fact}\label{cory1}
		The set of $\alpha$-eigenvalues of a digraph is the union (counting multiplicity) of the sets of $\alpha$-eigenvalues of its strong components.
	\end{fact}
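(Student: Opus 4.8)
The statement is the digraph counterpart of the classical fact that the spectrum of a block-triangular matrix is the multiset union of the spectra of its diagonal blocks, so the plan is to put $A_\alpha(G)$ into block-triangular form and read off the characteristic polynomial. First I would invoke the condensation of $G$: contracting each strong component of $G$ to a single vertex produces an acyclic digraph, which therefore admits a topological ordering of its vertices. Label the strong components $C_1,\dots,C_r$ (regarded as vertex sets) accordingly, so that $G$ has no arc from $C_j$ into $C_i$ whenever $j>i$, and then order $V(G)$ by concatenating $C_1,C_2,\dots,C_r$.

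With respect to this vertex ordering, an entry $(p,q)$ of $A(G)$ can be nonzero only if the component index of $p$ does not exceed that of $q$; hence $A(G)$ is block-triangular, and since $D(G)$ is diagonal, $A_\alpha(G)=\alpha D(G)+(1-\alpha)A(G)$ is block-triangular as well, with $i$-th diagonal block equal to the principal submatrix of $A_\alpha(G)$ on the rows and columns indexed by $C_i$, i.e.\ $A_\alpha(G)(V(G)\setminus C_i)$ in the notation of Section~2. Since the characteristic polynomial of a block-triangular matrix is the product of those of its diagonal blocks,
\[
\phi_\alpha(G)=\prod_{i=1}^{r}\phi\bigl(A_\alpha(G)(V(G)\setminus C_i),\,x\bigr),
\]
so the multiset of roots of $\phi_\alpha(G)$ is the disjoint union of the multisets of roots of the $r$ factors on the right. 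Identifying each diagonal block with the $A_\alpha$-matrix of the corresponding strong component yields exactly the asserted equality of eigenvalue multisets.

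The argument is entirely formal, so I expect no real obstacle, and no Perron-Frobenius input is needed. The only step that warrants a line of care is the identification in the last display: restricting $A_\alpha(G)$ to the index set $C_i$ restricts $A(G)$ to $A(G[C_i])$ and restricts the diagonal $D(G)$ entrywise, so one should simply record that the surviving diagonal entries are the out-degrees attached to the vertices of $C_i$ that enter the definition of its $\alpha$-matrix. Granting that, the ``counting multiplicity'' claim is immediate and Fact~\ref{cory1} follows.
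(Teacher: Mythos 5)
Your block-triangularization via the condensation is the right (and only natural) move, and everything up to the displayed factorization
\[
\phi_\alpha(G)=\prod_{i=1}^{r}\phi\bigl(A_\alpha(G)(V(G)\setminus C_i),\,x\bigr)
\]
is correct. The paper itself offers no argument (it declares the statement a Fact), so there is nothing to compare against; but the one step you wave through as "a line of care" is precisely where the argument breaks. The $i$-th diagonal block of $A_\alpha(G)$ is $\alpha D(G)\vert_{C_i}+(1-\alpha)A(G[C_i])$, whose diagonal entries are $\alpha d_v^+(G)$, i.e.\ they count \emph{all} out-neighbours of $v$ in $G$, including those lying in later components $C_j$. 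The matrix $A_\alpha(C_i)$ of the strong component viewed as a digraph in its own right has diagonal entries $\alpha d_v^+(G[C_i])$. These coincide only when no arc leaves $C_i$, so the identification of the diagonal blocks with the $A_\alpha$-matrices of the components fails whenever $\alpha>0$ and some component has an outgoing arc.

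This is not a repairable gap in the write-up: for $\alpha>0$ the statement itself is false as literally phrased. Take $G$ to be the two-vertex digraph with the single arc $(u,v)$. Then $A_\alpha(G)=\begin{pmatrix}\alpha&1-\alpha\\0&0\end{pmatrix}$ has eigenvalues $\alpha$ and $0$, while the strong components are two arcless singletons, each with $\alpha$-eigenvalue $0$. What your computation actually proves — and what should be stated — is that the $\alpha$-spectrum of $G$ is the multiset union of the spectra of the principal submatrices of $A_\alpha(G)$ indexed by the strong components; equivalently, the Fact holds verbatim only for $\alpha=0$. If you want to salvage the application the paper makes of this Fact (deducing $\rho_\alpha(H_1)=\rho_\alpha(G_1)$ in Theorem \ref{thms3}), you must additionally argue that the extra diagonal eigenvalues $\alpha d_v^+(G)$ contributed by the degenerate components do not exceed the spectral radius of the dominant block, e.g.\ via part (iii) of Lemma \ref{lemc2}; your proof as written does not address this.
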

	Suppose that $\mathbf{K} \in \widetilde{S}^t_{m}$ and $u_1,v_1$ are two vertices of digraph $G$ with $d_{u_1}^{+}(G)=d_{v_1}^{-}(G)=0$. Let $G_{u\to v}(\mathbf{K})$ be digraph obtained from $G$ and $P_{u_{2}\to v_{2}}(\mathbf{K})$ by identifying/merging  vertices $u_1,u_2$ and  $v_1,v_2$ as new vertices $u$ and $v$, respectively.
	
	\begin{thm}\label{thms3}
		Suppose that $H=G_{u\to v}(\mathbf{K})$ is strongly connected digraph with $V(H)=\{1,2,\ldots,n\}$.
		Let $E_i^{-}=\{(w,i)|w\in {u}^-(H)\}$ and  $E_i^{+}=\{(i,w)|w\in N_{u}^+(H) \}$ for $i \in \{u,v\}$ and  $H'=H-E_u^{-}-E_u^{+}+E_v^{-}+E_v^{+}$. Suppose that  $\boldsymbol{x}=(x_1, x_2, \dots, x_n)^T$ is  the $\alpha$-Perron vector of $H$,  where $x_i$ corresponds to the vertex $i$.  If $x_v\ge x_u$,  then $\rho_{\alpha}(H')>  \rho_{\alpha}(H)$.

	\end{thm}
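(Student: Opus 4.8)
The plan is to adapt the one-sided rerouting argument of Theorem~\ref{thm3}. There only the in-arcs of $u$ were moved to $v$; here we move both the in-arcs \emph{and} the out-arcs of $u$, which turns $u$ into an isolated vertex of $H'$, so the new ingredient is Fact~\ref{cory1}: it lets us replace $\rho_\alpha(H')$ by the $\alpha$-spectral radius of the nontrivial strong component $H'':=H'-u$.

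\emph{Step 1 (structure of $H'$).} Every arc of $H$ incident with $u$ lies in $E_u^-\cup E_u^+$, so $u$ is isolated in $H'$ and the strong components of $H'$ are $\{u\}$ and the induced subdigraph $H''=H'-u$. Moreover $H''$ is strongly connected: a directed path of $H$ between two vertices $\neq u$ that avoids $u$ survives in $H''$ unchanged (only arcs at $u$ were deleted), while one that passes through $u$, say along $\cdots\to p\to u\to q\to\cdots$ with $p\in N_u^-(H)$ and $q\in N_u^+(H)$, is rerouted through $v$ inside $H''$ by means of the added arcs $(p,v)\in E_v^-$ and $(v,q)\in E_v^+$. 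Hence, by Fact~\ref{cory1} and Lemma~\ref{lemc2}(iii) (the component $\{u\}$ contributes the $\alpha$-eigenvalue $0$, while $\rho_\alpha(H'')\ge\delta^+(H'')\ge 1>0$), we get $\rho_\alpha(H')=\rho_\alpha(H'')$.

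\emph{Step 2 (a Rayleigh-type inequality for $H''$).} Let $\boldsymbol{x}$ be the $\alpha$-Perron vector of $H$ (so $\boldsymbol{x}>0$) and let $\boldsymbol{x}''>0$ be its restriction to $V(H)\setminus\{u\}$. I would show $A_\alpha(H'')\boldsymbol{x}''\ge\rho_\alpha(H)\boldsymbol{x}''$ coordinatewise, with strict inequality at $v$. For $w\notin N_u^-(H)\cup\{v\}$ both $d_w^+$ and the set of arcs leaving $w$ are unchanged, so $\big(A_\alpha(H'')\boldsymbol{x}''\big)_w=\big(A_\alpha(H)\boldsymbol{x}\big)_w=\rho_\alpha(H)x_w$. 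For $w\in N_u^-(H)\setminus\{v\}$ the arc $(w,u)$ is replaced by $(w,v)$ with $d_w^+$ preserved, so $\big(A_\alpha(H'')\boldsymbol{x}''\big)_w-\big(A_\alpha(H)\boldsymbol{x}\big)_w=(1-\alpha)(x_v-x_u)\ge 0$. Finally, at $w=v$ the out-neighbourhood of $v$ becomes the disjoint union $N_v^+(H)\cup N_u^+(H)$ and $d_v^+$ grows by $d_u^+(H)$, whence
\[
\big(A_\alpha(H'')\boldsymbol{x}''\big)_v=\big(A_\alpha(H)\boldsymbol{x}\big)_v+\Big(\alpha\,d_u^+(H)\,x_v+(1-\alpha)\sum_{z\in N_u^+(H)}x_z\Big)=\rho_\alpha(H)x_v+T;
\]
using the eigenequation $\big(A_\alpha(H)\boldsymbol{x}\big)_u=\rho_\alpha(H)x_u$ one gets $T=\rho_\alpha(H)x_u+\alpha\,d_u^+(H)(x_v-x_u)\ge\rho_\alpha(H)x_u>0$, so $\big(A_\alpha(H'')\boldsymbol{x}''\big)_v>\rho_\alpha(H)x_v$.

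\emph{Step 3 (conclusion).} As $A_\alpha(H'')$ is nonnegative and irreducible ($H''$ strongly connected), $\boldsymbol{x}''>0$, and $A_\alpha(H'')\boldsymbol{x}''\ge\rho_\alpha(H)\boldsymbol{x}''$ with a strict coordinate, the standard Perron--Frobenius argument (pair the inequality with the positive left Perron vector of $A_\alpha(H'')$) yields $\rho_\alpha(H'')>\rho_\alpha(H)$, hence $\rho_\alpha(H')=\rho_\alpha(H'')>\rho_\alpha(H)$. I expect the main obstacle to be the degree bookkeeping at $v$ in Step~2: one has to check that $N_v^+(H)$ and $N_u^+(H)$ are disjoint — which rests on the defining structure of $H=G_{u\to v}(\mathbf{K})$, where the out-neighbours of $v$ sit in $G$ while those of $u$ sit in the path-bundle $P_{u\to v}(\mathbf{K})$ — and to handle the degenerate configurations where $u$ and $v$ are adjacent in $H$ (a length-one path from $u$ to $v$, or an arc of $G$ from $v$ to $u$), in which the formal surgery produces a loop at $v$; with the natural convention of discarding that loop, the count at $v$ changes only by a bounded amount and the strict inequality persists after combining it with the elementary bounds on $x_u,x_v$ coming from the eigenequation of $H$ at $u$.
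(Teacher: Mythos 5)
Your proof is correct, but it takes a genuinely different route from the paper's. The paper factors the two-sided surgery into two previously established steps: it first forms the intermediate digraph $H_1=H-E_u^{-}+E_v^{-}$ and invokes Theorem \ref{thm3} (the one-sided in-arc move) to get $\rho_{\alpha}(H_1)\ge\rho_{\alpha}(H)$; it then observes that the only nontrivial strong component of $H_1$ is the coalescence $G_1$ of $G$ at $u_1,v_1$, so $\rho_{\alpha}(H_1)=\rho_{\alpha}(G_1)$ by Fact \ref{cory1}; finally, since $G_1$ is a proper subdigraph of $H'$, part (ii) of Lemma \ref{lemc2} gives the strict inequality $\rho_{\alpha}(H')>\rho_{\alpha}(G_1)$, and chaining the three relations yields the claim. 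You instead perform both arc moves at once and run a single subinvariant-vector computation on $H''=H'-u$ with the restriction of the Perron vector of $H$, extracting strictness from the surplus $T=\rho_{\alpha}(H)x_u+\alpha d_u^{+}(H)(x_v-x_u)>0$ at the coordinate $v$ and closing with the left-Perron-vector pairing. Your route is self-contained (it does not consume Theorem \ref{thm3} or the subdigraph monotonicity as black boxes), and it applies Fact \ref{cory1} in the cleanest possible situation: in $H'$ the vertex $u$ is genuinely isolated, so its $1\times1$ block contributes the eigenvalue $0$, whereas in the paper's $H_1$ the trivial strong components ($u$ and the internal path vertices) still carry positive out-degrees, which makes the ``union of the spectra of the strong components'' step for $A_{\alpha}$ somewhat more delicate. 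The price you pay is the degree bookkeeping at $v$, including the degenerate configurations (a length-one $u\to v$ path, or an arc from $v_1$ to $u_1$ in $G$) in which the surgery formally creates a loop at $v$; this wrinkle is not specific to your argument, since the paper's identification of $H'-u$ with a coalescence-plus-attached-cycles digraph, and its claim that $G_1$ sits properly inside $H'$, silently assume the same non-degeneracy.
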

	\begin{proof}
		Take  $H_1=H-E_u^{-}+E_v^{-}$ and  $G_1$ is digraph obtained from $G$ by identifying the vertices $u_1, v_1$ as a new vertex. Since  the strongly connected components of $H_1$ are $G_1$ and some isolated vertices. $\rho_{\alpha}(H_1)=\rho_{\alpha}(G_1)$ follows from Fact \ref{cory1}. If $x_v\ge x_u$,  then by Theorem \ref{thm3}, we have $\rho_{\alpha}(H_1)\ge  \rho_{\alpha}(H)$.   Since $G_1$ is a proper subdigraph of $H'$, by $(ii)$ of Lemma \ref{lemc2}, we have $\rho_{\alpha}(H')>\rho_{\alpha}(G_1)$. Hence, we have that $\rho_{\alpha}(H')>\rho_{\alpha}(G_1)=\rho_{\alpha}(H_1)\ge  \rho_{\alpha}(H)$, that is $\rho_{\alpha}(H')>  \rho_{\alpha}(H)$.
	\end{proof}

	\begin{lem}\label{lem12}Suppose that $G \in \widetilde{\boldsymbol{\Theta}}_k(m)$ and $H \in \mathcal{R}_{m}^k$. Then
		\begin{enumerate}[(1).]
			\item there exists some digraph $G' \in \mathcal{R}_{m}^k$ such that
			$\rho_{\alpha}(G)<\rho_{\alpha}(G')$;
			\item  there exists some digraph $H' \in \widetilde{\boldsymbol{\Theta}}_k^{1}(m)$ such that
			$\rho_{\alpha}(H')<\rho_{\alpha}(H)$.
		\end{enumerate}
	\end{lem}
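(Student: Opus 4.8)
\emph{Plan.} The plan is to establish (2) by a vertex‑splitting construction together with Theorem~\ref{thms3}, and (1) by a direct comparison of characteristic polynomials via Corollaries~\ref{cors1} and \ref{cor:b1}. Part (2) is the cleaner half, so I would do it first. Write $H=\Rose(\mathbf{p})$ with $\mathbf{p}=(p_1,\dots,p_k)\in S_m^k$ (so every $p_i\ge 2$), $p_1\ge\dots\ge p_k$, and let $w$ be its centre. Split $w$ into two vertices $u,v$: assign the arc of $\overrightarrow C_{p_k}$ leaving $w$ to $v$ and the arc of $\overrightarrow C_{p_k}$ entering $w$ to $u$, while for each of the other $k-1$ cycles the arc leaving $w$ is assigned to $u$ and the arc entering $w$ to $v$. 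The resulting digraph is $\widetilde\Theta(p_1,\dots,p_{k-1};p_k)$, and since all $p_i\ge2$ one checks $(p_1,\dots,p_{k-1})\in\widetilde S_{m_1}^{k-1}$ and $(p_k)\in\widetilde S_{m_2}^{1}$ with $m_1+m_2=m$, so $H':=\widetilde\Theta(p_1,\dots,p_{k-1};p_k)\in\prescript{k-1}{1}{\widetilde{\boldsymbol\Theta}_{m_2}^{m_1}}\subseteq\widetilde{\boldsymbol\Theta}_k^{1}(m)$. Now apply Theorem~\ref{thms3} to $H'$, viewed as $\bigl(P_{v\to u}((p_k))\bigr)_{u\to v}\bigl((p_1,\dots,p_{k-1})\bigr)$ (or as $\bigl(P_{u\to v}((p_1,\dots,p_{k-1}))\bigr)_{v\to u}\bigl((p_k)\bigr)$), moving all arcs incident with whichever hub has the smaller $\alpha$‑Perron entry onto the other hub: by Theorem~\ref{thms3} this strictly increases $\rho_\alpha$, no loop is created since all $p_i\ge2$, and the unique non‑trivial strong component of the result is exactly $\Rose(p_1,\dots,p_k)=H$. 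By Fact~\ref{cory1}, $\rho_\alpha(H')<\rho_\alpha(H)$.

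For (1), write $G=\widetilde\Theta(\mathbf{K}_1,\mathbf{K}_2)$ with $\mathbf{K}_1=(k_1,\dots,k_s)$, $\mathbf{K}_2=(l_1,\dots,l_t)$, $s+t=k$, $m_1+m_2=m$, and $d=\frac{1-\alpha}{x-\alpha}$. The statement is vacuous when $\mathcal R_m^k=\varnothing$, so assume $m\ge 2k$. By Corollary~\ref{cors1},
\[
\phi_\alpha(G)=(x-\alpha)^{m-k}\Bigl[(x-\alpha s)(x-\alpha t)-(1-\alpha)^2\Bigl(\sum_i d^{k_i-1}\Bigr)\Bigl(\sum_j d^{l_j-1}\Bigr)\Bigr]=:(x-\alpha)^{m-k}g(x),
\]
so $\rho_\alpha(G)$ is the largest root of $g$. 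Choose $\mathbf{p}\in S_m^k$ by ``merging the two hubs'' of $G$: if no path of $G$ has length $1$, take $\mathbf{p}=(k_1,\dots,k_s,l_1,\dots,l_t)$; if $\ell\in\{1,2\}$ of the paths have length $1$, replace each such path by a copy of $2$ and decrease the largest remaining entries by a total of $\ell$, keeping all entries $\ge2$ (possible exactly because $m\ge2k$). By Corollary~\ref{cor:b1}, $\phi_\alpha(\Rose(\mathbf{p}))=(x-\alpha)^{m-k}\bigl(x-k\alpha-(1-\alpha)\sum_i d^{p_i-1}\bigr)=:(x-\alpha)^{m-k}h(x)$, and $h$ is strictly increasing on $(\alpha,\infty)$; hence $x_0:=\rho_\alpha(\Rose(\mathbf{p}))$ is the unique root of $h$ there, $x_0>k\alpha$, and $h(x)\ge0$ for $x\ge x_0$. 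It then suffices to show $g(x)>0$ for all $x\ge x_0$, since then $\phi_\alpha(G)>0$ on $[x_0,\infty)$, forcing $\rho_\alpha(G)<x_0=\rho_\alpha(\Rose(\mathbf{p}))$, and $G'=\Rose(\mathbf{p})$ does the job.

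For the estimate, put $A=\sum_i d^{k_i-1}$, $B=\sum_j d^{l_j-1}$. Comparing the path‑lengths of $G$ with the cycle‑lengths of $\mathbf{p}$ gives $A+B\le\sum_i d^{p_i-1}+\ell(1-d)$ (with $\ell=0$ and equality in the loop‑free case), so $h(x)\ge0$ yields $(1-\alpha)(A+B)\le(x-k\alpha)+(1-\alpha)\ell(1-d)$. Using $AB\le\tfrac14(A+B)^2$ and completing the square,
\[
g(x)\ \ge\ \tfrac14\Bigl[\bigl(x+(x-k\alpha)\bigr)^2-\alpha^2(s-t)^2-\bigl((x-k\alpha)+(1-\alpha)\ell(1-d)\bigr)^2\Bigr],
\]
and one verifies the bracket is positive for $x\ge x_0$ from $x_0>k\alpha$, $0\le(1-\alpha)\ell(1-d)<2$, and $|s-t|\le k-2$ (as $s,t\ge1$); in the loop‑free case this collapses to the clean inequality $3x^2-2k\alpha x-\alpha^2(s-t)^2>0$. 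One may also streamline the loop‑free analysis by first reducing, via Lemmas~\ref{thmd8} and \ref{lems1}, to the extremal member of the class $\prescript{s}{t}{\widetilde{\boldsymbol\Theta}_{m_2}^{m_1}}$ containing $G$, though that representative still has length‑$1$ paths.

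The routine case is the loop‑free one: then $\mathbf{p}$ is literally the merge of the two hubs of $G$, $A+B=\sum_i d^{p_i-1}$ identically, the hypothesis $\mathcal R_m^k\neq\varnothing$ is automatic, and everything is clean. The main obstacle is the case where $G$ has a path of length $1$: merging the hubs then creates a loop, and — as a direct computation with the characteristic polynomials shows — ``$\Rose$ with a loop'' actually has \emph{larger} $\alpha$‑spectral radius than any nearby loopless rose digraph, so one cannot delete or expand the loop for free. One must instead argue with the modified $\mathbf{p}$ and control the extra term $(1-\alpha)\ell(1-d)$ in the estimate above, distinguishing the regime $x$ near $1$ from the regime $x$ near $k\alpha$; this is exactly the place where the hypothesis $m\ge2k$ (equivalently $\mathcal R_m^k\neq\varnothing$) is genuinely used.
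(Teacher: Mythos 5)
Your part (2) is correct and is essentially the paper's own argument: split the centre of $\Rose(\mathbf p)$ into two hubs to obtain a member of $\widetilde{\boldsymbol\Theta}_k^{1}(m)$, then re-merge the hub with the smaller Perron entry into the other via Theorem~\ref{thms3} and finish with Fact~\ref{cory1}. For part (1) the paper uses the very same arc-shifting device in the opposite direction: taking $\boldsymbol x$ to be the $\alpha$-Perron vector of $G=\widetilde\Theta(\mathbf K_1,\mathbf K_2)$ with, say, $x_v\ge x_u$, it moves all arcs at $u$ onto $v$ and concludes $\rho_\alpha(\Rose(\mathbf K_1,\mathbf K_2))>\rho_\alpha(G)$ in one application of Theorem~\ref{thms3}. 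You instead compare characteristic polynomials through Corollaries~\ref{cors1} and~\ref{cor:b1}. In the loop-free case your reduction to $3x^2-2k\alpha x-\alpha^2(s-t)^2>0$ for $x>\max(1,k\alpha)$ is correct and complete, and it is a legitimately different (if heavier) route.

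For $\ell\ge 1$, however, there is a genuine gap: positivity of your bracket does \emph{not} follow from the three facts you cite. Take $k=100$, $\alpha=0.02$, $s=99$, $t=1$, $\ell=2$ and $x$ marginally above $k\alpha=2$: then $\epsilon=(1-\alpha)\ell(1-d)\approx 0.99<2$ and $|s-t|=k-2$, yet the bracket is approximately $(2+\epsilon)(2-\epsilon)-\alpha^2\cdot 98^2\approx 4-0.98-3.84<0$. What saves the argument is information you never invoke: your modified $\mathbf p$ contains $\ell$ entries equal to $2$, so $x_0-k\alpha=(1-\alpha)\sum_i d^{p_i-1}\ge \ell(1-\alpha)d(x_0)$, which pushes $x_0$ far enough above $k\alpha$ that the term $2(x_0-k\alpha)(x_0-\epsilon_0)$ dominates $\epsilon_0^2$. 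You explicitly flag the $\ell\ge1$ case as the hard one but do not carry out the estimate, so as written ``one verifies'' is unsubstantiated and, taken literally, false. Since Theorem~\ref{thms3} disposes of part (1) uniformly in one step, that is the route to prefer. (A final caveat applying to both your sketch and the paper: when $(\mathbf K_1,\mathbf K_2)$ contains entries equal to $1$, the merged digraph has loops and is not literally in $\mathcal R_m^k$; your instinct to modify $\mathbf p$ addresses a real issue that the paper's proof of (1) silently skips, and the lemma as stated also needs $m\ge 2k$ for $\mathcal R_m^k$ to be nonempty.)
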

	\begin{proof}
		$(1)$ Wlog, assume that $G=\widetilde{\Theta}(\mathbf{K}_1,\mathbf{K}_2)$ with $\mathbf{K}_1=(k_1,k_2,\dots,k_s) \in \widetilde{S}_{m_1}^s$, $\mathbf{K}_2=(l_1,l_2,\dots,l_t)\in \widetilde{S}_{m_2}^{t}$ and  $u,v \in V(G)$ such that
		$d^{+}_{G}(u)=d^{-}_{G}(v)=s, d^{+}_{G}(v)=d^{-}_{G}(u)=t$.
		We can assume w.l.o.g. that $\boldsymbol{x}$ be the $\alpha$-Perron vector of $G$ with $x_v\ge x_u$.
		
		For $i  \in \{u,v\}$, take $E_i^{-}=\{(w,i)|w\in N_{u}^-\}$ and $E_i^{+}=\{(i,w)|w\in N_{u}^+ \}$.
		Let $G'=G-E_u^{-}-E_u^{+}+E_v^{-}+E_v^{+}$, Then $G'=\Rose(\mathbf{K})\cup \{u\}$ with $\mathbf{K}=(\mathbf{K}_1,\mathbf{K}_2)\in \widetilde{S}_{m_1+m_2}^{s+t}$.
		By Theorem \ref{thms3}, we have that $\rho_{\alpha}(G')>\rho_{\alpha}(G)$. Hence, we have $\rho_{\alpha}(\Rose(\mathbf{K}))=\rho_{\alpha}(G')>\rho_{\alpha}(G)$.
		
		$(2)$ Similar to proof of $(1)$, by Theorem \ref{thml3}, the conclusion of $(2)$ follows.
	\end{proof}
	
	From Lemma \ref{lem12},  we know that the digraph with the maximal $\alpha$-spectral radius among $\mathcal{R}_{m}^k \cup \widetilde{\boldsymbol{\Theta}}_k(m)$ must be in $\mathcal{R}_{m}^k$,  and the digraph with  the minimal $\alpha$-spectral radius among $\mathcal{R}_{m}^k \cup \widetilde{\boldsymbol{\Theta}}_k^1(m)$ must be in $\widetilde{\boldsymbol{\Theta}}_k^1(m)$. By Theorem \ref{thm4}, $(1)$ of Theorem \ref{thm2} and  Lemma \ref{lem12},  the next theorem follows immediately.
	\begin{thm}\label{thm7}
		Take $\gamma_1=\mathbf{x}^*(S_m^{k})$, among $\mathcal{R}_{m}^k \cup \widetilde{\boldsymbol{\Theta}}_k(m)$,  the digraph $\Rose(\gamma_1)$ is the unique digraph with  the maximal $\alpha$-spectral radius,
		and  the digraph $\widetilde{\Theta}_m(\mathbf{\zeta}_{k-1})$
		is the unique digraph with  the minimal $\alpha$-spectral radius among $\mathcal{R}^{k}_m \cup \widetilde{\boldsymbol{\Theta}}_{k}^{1}(m)$.
	\end{thm}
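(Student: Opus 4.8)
The idea is to reduce the extremal problem over the union $\mathcal{R}_m^k\cup\widetilde{\boldsymbol{\Theta}}_k(m)$ (resp. $\mathcal{R}_m^k\cup\widetilde{\boldsymbol{\Theta}}_k^1(m)$) to the two extremal problems already solved inside each individual class, using Lemma \ref{lem12} as the bridge between the two classes. The ingredients are: Theorem \ref{thm1}, which singles out $\Rose(\gamma_1)$ as the unique digraph of maximum $\alpha$-spectral radius in $\mathcal{R}_m^k$; part (2) of Theorem \ref{thm2} (applied with $s=k-1$, $t=1$, so that $\prescript{k-1}{1}{\widetilde{\boldsymbol\Theta}(m)}=\widetilde{\boldsymbol{\Theta}}_k^1(m)$), which singles out $\widetilde{\Theta}_m(\mathbf{\zeta}_{k-1})$ as the unique digraph of minimum $\alpha$-spectral radius in $\widetilde{\boldsymbol{\Theta}}_k^1(m)$; and Lemma \ref{lem12}, which compares digraphs across the two classes.

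For the maximal part, let $D\in\mathcal{R}_m^k\cup\widetilde{\boldsymbol{\Theta}}_k(m)$ with $D\neq\Rose(\gamma_1)$. If $D\in\widetilde{\boldsymbol{\Theta}}_k(m)$, then part (1) of Lemma \ref{lem12} produces some $G'\in\mathcal{R}_m^k$ with $\rho_{\alpha}(D)<\rho_{\alpha}(G')$, while Theorem \ref{thm1} gives $\rho_{\alpha}(G')\le\rho_{\alpha}(\Rose(\gamma_1))$; hence $\rho_{\alpha}(D)<\rho_{\alpha}(\Rose(\gamma_1))$. If instead $D\in\mathcal{R}_m^k$, then $D\neq\Rose(\gamma_1)$ together with Theorem \ref{thm1} already yields $\rho_{\alpha}(D)<\rho_{\alpha}(\Rose(\gamma_1))$. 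In either case the inequality is strict, so $\Rose(\gamma_1)$ is the unique maximizer.

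The minimal part is dual. Let $D\in\mathcal{R}_m^k\cup\widetilde{\boldsymbol{\Theta}}_k^1(m)$ with $D\neq\widetilde{\Theta}_m(\mathbf{\zeta}_{k-1})$. If $D\in\mathcal{R}_m^k$, then part (2) of Lemma \ref{lem12} produces $H'\in\widetilde{\boldsymbol{\Theta}}_k^1(m)$ with $\rho_{\alpha}(H')<\rho_{\alpha}(D)$, and part (2) of Theorem \ref{thm2} gives $\rho_{\alpha}(\widetilde{\Theta}_m(\mathbf{\zeta}_{k-1}))\le\rho_{\alpha}(H')$, so $\rho_{\alpha}(\widetilde{\Theta}_m(\mathbf{\zeta}_{k-1}))<\rho_{\alpha}(D)$. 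If $D\in\widetilde{\boldsymbol{\Theta}}_k^1(m)$, the strict inequality comes directly from Theorem \ref{thm2}. Thus $\widetilde{\Theta}_m(\mathbf{\zeta}_{k-1})$ is the unique minimizer among $\mathcal{R}_m^k\cup\widetilde{\boldsymbol{\Theta}}_k^1(m)$.

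I do not anticipate a genuine obstacle at this stage: all the analytic work has already been absorbed into Lemma \ref{lem12} (whose proof rests on the Perron-vector arc-shifting estimates of Theorems \ref{thml3} and \ref{thms3}) and into Theorems \ref{thm1}--\ref{thm2}. The only point that needs attention is the bookkeeping of strict versus weak inequalities: one must verify that the inequality handed over by Lemma \ref{lem12} is strict, so that a digraph lying in the ``wrong'' class cannot merely tie the claimed extremal digraph — this strictness is precisely what upgrades extremality to uniqueness.
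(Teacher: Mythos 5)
Your proposal is correct and follows essentially the same route as the paper: Lemma \ref{lem12} reduces the extremal problem on the union to the individual classes, and Theorem \ref{thm1} (maximum over $\mathcal{R}_m^k$) together with part (2) of Theorem \ref{thm2} (minimum over $\widetilde{\boldsymbol{\Theta}}_k^1(m)$) finishes the argument, with the strictness supplied by Lemma \ref{lem12} guaranteeing uniqueness exactly as you note. (The paper's citation of ``Theorem \ref{thm4}'' at this point appears to be a slip for Theorem \ref{thm1}; your choice of reference is the right one.)
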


	By Theorem \ref{thm7}, we have the next corollary, 	which is Theorem $3.3$ in \cite{MR3872980}.
	
	\begin{cor}
		$\infty(2, m-2)$ and $\widetilde{\Theta}(2,1; m-3)$ are the unique digraphs with the maximal and minimal $\alpha$-spectral radius, among all the strongly connected bicyclic digraphs with size $m$, respectively.
	\end{cor}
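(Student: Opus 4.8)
First I would reduce to the two families studied in Sections 3 and 4. By a directed ear decomposition (see \cite{MR3230435}), every strongly connected digraph with cyclomatic number two is obtained from a dicycle by adjoining a single directed ear: if the ear is closed (returns to its starting vertex) one gets a digraph in $\mathcal{R}^2_m$, and if the ear joins two distinct vertices of the cycle one gets a generalized theta digraph with two internally disjoint directed paths in one direction and one in the other, i.e.\ a member of $\prescript{2}{1}{\widetilde{\boldsymbol\Theta}(m)}$ or of $\prescript{1}{2}{\widetilde{\boldsymbol\Theta}(m)}$. Since the formula of Corollary \ref{cors1} is symmetric under exchanging $(\mathbf{K}_1,s)$ with $(\mathbf{K}_2,t)$, the $\prescript{1}{2}$-oriented and $\prescript{2}{1}$-oriented theta digraphs have equal characteristic polynomials, hence equal $\alpha$-spectral radii; so for our purposes the strongly connected bicyclic digraphs of size $m$ are exactly the members of $\mathcal{R}^2_m\cup\widetilde{\boldsymbol\Theta}^1_3(m)$.

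By Corollary \ref{thm4}, $\infty(2,m-2)$ and $\infty(\lfloor m/2\rfloor,\lceil m/2\rceil)$ are the unique digraphs with maximal, respectively minimal, $\alpha$-spectral radius in $\mathcal{R}^2_m$; by the corollary immediately following Theorem \ref{thm2} (the case $s=2$, $t=1$, generalizing Theorem~3.1 of \cite{MR3872980}), $\widetilde{\Theta}(m-2,1;1)$ and $\widetilde{\Theta}(2,1;m-3)$ play the same roles in $\widetilde{\boldsymbol\Theta}^1_3(m)$. Hence the proof is finished once we compare the champions of the two families, namely once we show
\[
\rho_\alpha\big(\widetilde{\Theta}(m-2,1;1)\big)<\rho_\alpha\big(\infty(2,m-2)\big),\qquad \rho_\alpha\big(\widetilde{\Theta}(2,1;m-3)\big)<\rho_\alpha\big(\infty(\lfloor m/2\rfloor,\lceil m/2\rceil)\big).
\]

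For this I would write down the characteristic polynomials explicitly. Taking $G$ to be a single vertex in Corollary \ref{cor:b1} gives, with $d=\tfrac{1-\alpha}{x-\alpha}$,
\[
\phi_\alpha(\infty(p,q))=(x-\alpha)^{m-2}\Big(x-2\alpha-(1-\alpha)\big(d^{\,p-1}+d^{\,q-1}\big)\Big),
\]
while Corollary \ref{cors1} with $s=2$, $t=1$, after collecting powers of $x-\alpha$, gives
\[
\phi_\alpha(\widetilde{\Theta}(a,1;b))=(x-\alpha)^{m-2}\Big(x-2\alpha-(1-\alpha)\big(d^{\,b}+d^{\,a+b-1}\big)\Big)
\]
whenever $a+b+1=m$. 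Each of the four digraphs in question is strongly connected with one vertex of outdegree $2$ and all others of outdegree $1$, so Lemma \ref{lemc2}(iii) gives $1<\rho_\alpha<2$, and $\rho_\alpha$ is therefore the unique root in $(1,\infty)$ of an equation $x-2\alpha=(1-\alpha)(d^{\,r}+d^{\,s})$. The function $F_{r,s}(x):=x-2\alpha-(1-\alpha)(d^{\,r}+d^{\,s})$ is strictly increasing on $(\alpha,\infty)$, equals $-1$ at $x=1$, and tends to $+\infty$; hence if exponent pairs satisfy $d^{\,r}+d^{\,s}>d^{\,r'}+d^{\,s'}$ for all $x>1$ (which holds, since $0<d<1$ there, whenever $r\le r'$ and $s\le s'$ with at least one inequality strict), then the root of $F_{r,s}$ is strictly smaller than that of $F_{r',s'}$. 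For the maximum, $\infty(2,m-2)$ has exponent pair $(1,m-3)$ and $\widetilde{\Theta}(m-2,1;1)$ has $(1,m-2)$; for the minimum, $\infty(\lfloor m/2\rfloor,\lceil m/2\rceil)$ has $(\lfloor m/2\rfloor-1,\lceil m/2\rceil-1)$ and $\widetilde{\Theta}(2,1;m-3)$ has $(m-3,m-2)$. In both cases the theta digraph's exponents dominate the $\infty$-digraph's, using $\lfloor m/2\rfloor\le m-2$ and $\lceil m/2\rceil\le m-1$ with strictness in at least one slot for $m\ge4$, so the theta digraph has the smaller root; combined with the two family-wise extremal statements the corollary follows.

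The main obstacle is exactly this cross-family comparison: the extremal results of Sections 3 and 4 are indexed by a single parameter $k$, whereas a bicyclic digraph is a rose digraph with $k=2$ or a theta digraph with $k=3$, so Theorem \ref{thm7} does not close the argument and one must compare the two champions directly through their characteristic polynomials. The only remaining care is the small range $m\in\{4,5\}$, where one of the exponent inequalities becomes an equality in a single slot (covered by the ``at least one strict'' clause), together with the routine check that $\widetilde\Theta(m-2,1;1)$ and $\widetilde\Theta(2,1;m-3)$ are genuinely non-isomorphic to the corresponding $\infty$-digraphs, so that the claimed uniqueness is not vacuous.
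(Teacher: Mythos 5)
Your argument is correct in substance but follows a genuinely different route from the paper, which disposes of this corollary in a single line by citing Theorem \ref{thm7}. That theorem rests on the Perron-vector arc-relocation machinery (Theorems \ref{thm3} and \ref{thms3}, Lemma \ref{lem12}), and you are right that it does not literally close the argument here: it pairs $k$-petal roses with $k$-path theta digraphs, whereas the strongly connected bicyclic digraphs are $\mathcal{R}^2_m$ together with the \emph{three}-path theta digraphs, so no single value of $k$ in Theorem \ref{thm7} covers both halves of the statement. Your replacement -- reduce to the two families by ear decomposition, quote the within-family extremal results (Corollary \ref{thm4} and the corollary following Theorem \ref{thm2}), and then compare the two champions directly through the explicit characteristic polynomials $(x-\alpha)^{m-2}\bigl(x-2\alpha-(1-\alpha)(d^{\,r}+d^{\,s})\bigr)$ -- is sound: the reduction of $\rho_\alpha$ to the unique root in $(1,\infty)$ of $x-2\alpha=(1-\alpha)(d^{\,r}+d^{\,s})$ is justified by Lemma \ref{lemc2}(iii) and the monotonicity of $d$, and the exponent pairs you list for the four digraphs are all correct. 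What the paper's route buys is a uniform treatment over all $k$; what yours buys is an argument that actually applies to the mixed case $k=2$ versus $k=3$ and gives explicit control of the two roots.

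One sentence needs repair. You assert that if $d^{\,r}+d^{\,s}>d^{\,r'}+d^{\,s'}$ for all $x>1$ then the root of $F_{r,s}$ is strictly \emph{smaller} than that of $F_{r',s'}$; the direction is reversed. A larger subtracted term makes $F_{r,s}$ pointwise smaller, and a smaller increasing function crosses zero later, so the root of $F_{r,s}$ is strictly \emph{larger}. Your applications are consistent with the corrected statement (larger exponent pair $\Rightarrow$ larger $F$ $\Rightarrow$ smaller root, so both theta champions lie strictly below the corresponding $\infty$-champions), and the final conclusions are the right ones; so this is a local slip rather than a gap, but as written the stated principle contradicts the sentence that applies it and should be fixed before the argument is used.
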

	
	\section{The first four minimal \texorpdfstring{$\alpha$}{α}-spectral radius of strongly connected digraphs}
	In \cite{MR3230435},  Hong and You determined the first four minimal signless Laplacian spectral radii and adjacency spectral radii among all strongly connected digraphs on $n$ vertices.
	In this section,  we will show that the above conclusions for $\alpha$-spectral radius also hold.
	
	The girth of $G$ is the length of the shortest directed cycle of $G$. Let $\mathcal{G}_{n, g}$  denote the set of strongly connected digraph on $n$ vertices with girth $g\ge 2$. If $g=n$,  then $\mathcal{G}_{n, n}=\overrightarrow{C_n}$. For $2\le g\le n-1$, take $C_{n, g}$ be a digraph  (see (a) of Fig.\ref{fig:twofig}) obtained by adding a directed edge $(u_g, u_1)$ on the directed cycle $\overrightarrow{C_n}=u_1u_2\dots u_n$. Clearly,  $C_{n, g}\in \mathcal{G}_{n, g}$ and $C_{n, g}\cong \widetilde{\Theta}(n+1-g, 1; g-1)$.

	The following lemmas will be used in the proof of our main results.
	
	\begin{lem}\cite{Xi2018Merging}\label{lem14}
		\begin{enumerate}[(i).]
			\item Let $D(\neq \overrightarrow{C_n})$ be a strongly connected digraph with $V(D)=\{u_1, u_2, \dots, u_n\}$,  $\overrightarrow{P}=u_1u_2\dots u_l(l\ge 3)$ be a directed path of $D$ with $d_{u_i}^+=1(i=2, 3, \dots, l-1)$.  Then we have $x_2<x_3<\dots<x_{l-1}<x_l$.
			\item 	For $2\le g\le n-1$ and $G\in \mathcal{G}_{n, g}$, $\rho_{\alpha}(G)\ge \rho_{\alpha}(C_{n, g})>1$ holds, and equality if and only if $G\cong C_{n, g}$.
		\end{enumerate}
	\end{lem}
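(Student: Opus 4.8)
Write $\rho=\rho_\alpha(D)$ and let $\boldsymbol{x}$ be the $\alpha$-Perron vector of $D$, with $x_i$ its entry at $u_i$. For part (i) the plan is simply to read the eigenequation $A_\alpha(D)\boldsymbol{x}=\rho\boldsymbol{x}$ at the outdegree-one internal vertices of $\overrightarrow{P}$. For $2\le i\le l-1$ the vertex $u_i$ has $u_{i+1}$ as its unique out-neighbour, so the $u_i$-coordinate gives $\alpha x_i+(1-\alpha)x_{i+1}=\rho x_i$, i.e.\ $x_{i+1}=\frac{\rho-\alpha}{1-\alpha}x_i$. Since $D$ is strongly connected but $D\neq\overrightarrow{C_n}$, one has $\rho>1$: either the outdegree vector of $D$ is constant, hence equal to some $c\ge 2$ and $\rho=c$, or it is not constant and $\rho>\delta^+(D)\ge 1$ by Lemma~\ref{lemc2}(iii). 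Therefore $\frac{\rho-\alpha}{1-\alpha}>1$, so $x_{i+1}>x_i$ for $i=2,\dots,l-1$, and chaining these inequalities yields $x_2<x_3<\dots<x_l$.

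For part (ii), I would first record that $\rho_\alpha(C_{n,g})>1$: indeed $C_{n,g}$ is strongly connected with $n+1$ arcs (so $C_{n,g}\neq\overrightarrow{C_n}$) and its outdegree vector is not constant ($u_g$ has outdegree $2$, all other vertices $1$), so $\rho_\alpha(C_{n,g})>\delta^+(C_{n,g})=1$ by Lemma~\ref{lemc2}(iii). For the extremality, let $G\in\mathcal{G}_{n,g}$ attain the minimum $\alpha$-spectral radius; I aim to show $G\cong C_{n,g}$ in three steps. \emph{Step 1 (pruning).} If some arc $e$ of $G$ lies on no $g$-cycle and $G-e$ is still strongly connected, then $G-e\in\mathcal{G}_{n,g}$ (arc deletion never lowers the girth, and a $g$-cycle survives), while $A_\alpha(G-e)\le A_\alpha(G)$, $A_\alpha(G-e)\neq A_\alpha(G)$, and $A_\alpha(G)$ is irreducible, so Lemma~\ref{lemc2}(ii) gives $\rho_\alpha(G-e)<\rho_\alpha(G)$, contradicting minimality; hence every arc of $G$ is either indispensable for strong connectivity or lies on a $g$-cycle. \emph{Step 2 (structure).} Fix a $g$-cycle $C$ of $G$. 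Because every cycle of $G$ has length $\ge g$, any directed ear attached to the already-built part of $G$ closes, together with the relevant arc of $C$, a cycle of length $\ge g$, forcing the ear to be long; combining this with part (i) (Perron entries strictly increase along outdegree-one directed paths, so detaching a vertex of outdegree $\ge 2$ into a pendant directed path cannot increase $\rho_\alpha$) one deduces that $G$ is exactly $C$ together with a single directed ear running through all $n-g$ remaining vertices.

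\emph{Step 3 (identifying the ear).} Up to isomorphism such digraphs are precisely the generalized theta digraphs $\widetilde{\Theta}(j,\,n-g+1;\,g-j)$ with $1\le j\le\min\{g-1,n-g+1\}$, together with the rose digraph $\Rose\bigl((g,\,n-g+1)\bigr)$ when $n\ge 2g-1$. For $C_{n,g}$ (the case $j=1$), Corollary~\ref{cors1} gives
\begin{equation*}
\phi_\alpha(C_{n,g})=(x-\alpha)^{n-1}(x-2\alpha)-(1-\alpha)^n-(x-\alpha)^{n-g}(1-\alpha)^g ,
\end{equation*}
and for the general theta competitor the same corollary yields $\phi_\alpha=(x-\alpha)^{n-1}(x-2\alpha)-(x-\alpha)^n(d^{\,g}+d^{\,n+1-j})$ with $d=\frac{1-\alpha}{x-\alpha}\in(0,1)$ for $x>1$; by Lemma~\ref{lems111} the comparison reduces to the monotone quantity $d^{\,g}+d^{\,n+1-j}$, strictly smallest at $j=1$, so $\rho_\alpha(C_{n,g})<\rho_\alpha(\widetilde{\Theta}(j,n-g+1;g-j))$ for $j\ge 2$. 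The rose case follows from computing $\phi_\alpha(\Rose((g,n-g+1)))$ via Corollary~\ref{cor:b1} and comparing with $\phi_\alpha(C_{n,g})$: their difference is $(1-\alpha)^{n-g+1}\bigl((1-\alpha)^{g-1}-(x-\alpha)^{g-1}\bigr)<0$ for $x>1$, hence $\phi_\alpha(\Rose((g,n-g+1)))<0$ at $x=\rho_\alpha(C_{n,g})$ and $\rho_\alpha(C_{n,g})<\rho_\alpha(\Rose((g,n-g+1)))$ (in the spirit of Lemma~\ref{lemz2}). All these inequalities are strict unless $G\cong C_{n,g}$, which gives both $\rho_\alpha(G)\ge\rho_\alpha(C_{n,g})$ and the equality characterization.

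\textbf{Main obstacle.} The genuine difficulty is Step~2: one must guarantee that every intermediate digraph stays inside $\mathcal{G}_{n,g}$ while deleting arcs and detaching vertices. Arc deletion cannot decrease the girth, but it may \emph{increase} it past $g$ (so one must keep one fixed $g$-cycle untouched throughout) or destroy strong connectivity; and after reducing to ``a $g$-cycle with one ear'' one still has to verify the girth is exactly $g$ and not larger, which is precisely what pins the ear's attachment point to the range $j\le n-g+1$. Making this structural reduction rigorous — rather than the characteristic-polynomial bookkeeping of Step~3 — is the crux of the argument.
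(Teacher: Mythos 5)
This lemma is imported from \cite{Xi2018Merging}; the paper under review gives no proof of its own, so your argument has to stand on its own. Part (i) is correct and complete: at each internal vertex $u_i$ ($2\le i\le l-1$) of outdegree one the eigenequation reads $\alpha x_i+(1-\alpha)x_{i+1}=\rho x_i$, hence $x_{i+1}=\frac{\rho-\alpha}{1-\alpha}x_i$, and your justification that $\rho>1$ for a strongly connected $D\neq\overrightarrow{C_n}$ (constant outdegree forces the common value to be at least $2$; non-constant outdegree forces $\rho>\delta^+\ge 1$ by Lemma \ref{lemc2}(iii)) is exactly what is needed. The characteristic-polynomial bookkeeping in your Step 3 is also correct: among the ``$g$-cycle plus one ear'' digraphs the comparison does reduce to the quantity $d^{\,g}+d^{\,n+1-j}$, minimized at $j=1$, and the rose case does come out with the difference you compute.

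The genuine gap is Step 2, and it is not just a matter of making an ear decomposition rigorous: the reduction you propose provably cannot reach the cycle-plus-one-ear form in general. Step 1 only deletes arcs whose removal preserves strong connectivity, but a minimal strongly connected digraph (one in which every arc is a strong bridge) can have up to $2(n-1)$ arcs. For example $\Rose((2,2,2))$ on $4$ vertices lies in $\mathcal{G}_{4,2}$, has $6>n+1=5$ arcs, and no arc can be deleted without destroying strong connectivity; it is not a $g$-cycle with a single ear, so your Step 3 comparison never reaches it, and the same happens for any non-bicyclic minimal strongly connected digraph in $\mathcal{G}_{n,g}$. Moreover the auxiliary claim inside Step 2 --- ``detaching a vertex of outdegree $\ge 2$ into a pendant directed path cannot increase $\rho_\alpha$'' --- is not a legitimate operation within $\mathcal{G}_{n,g}$: a pendant directed path destroys strong connectivity, and the spectral radius of the result is then governed by its strong components (Fact \ref{cory1}), which is precisely the delicate point that the arc-redirection/vertex-merging arguments of Theorems \ref{thm3} and \ref{thms3}, combined with part (i), are designed to control. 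A correct proof of (ii) must compare such digraphs with $C_{n,g}$ by an operation of that kind (this is essentially the route taken in \cite{Xi2018Merging} and \cite{MR3230435}), not by arc deletion followed by a finite case check.
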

	
	\begin{lem}\label{lem16}
		Let $n\ge 4$, then $\rho_{\alpha}(C_{n, g+1})<\rho_{\alpha}(C_{n, g})$.
	\end{lem}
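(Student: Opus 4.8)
The plan is to compute the characteristic polynomials $\phi_\alpha(C_{n,g})$ and $\phi_\alpha(C_{n,g+1})$ explicitly and compare them at $x \ge \rho_\alpha(C_{n,g})$, then invoke the standard fact that if $\phi_\alpha(C_{n,g+1})$ stays strictly positive on $[\rho_\alpha(C_{n,g}),+\infty)$ the spectral radius must have dropped. Since $C_{n,g} \cong \widetilde{\Theta}(n+1-g,\,1;\,g-1)$, I would apply Corollary \ref{cors1} with $s=2$, $t=1$, path lengths $\mathbf{K}_1=(n+1-g,1)$ and $\mathbf{K}_2=(g-1)$. Writing $d=\frac{1-\alpha}{x-\alpha}$ and using that the digraph has $n$ vertices, Corollary \ref{cors1} gives
\begin{align*}
\phi_\alpha(C_{n,g}) &= (x-\alpha)^{n-2}(x-2\alpha)(x-\alpha) - (x-\alpha)^n\big(d^{\,n+1-g}+d\big)d^{\,g-1}\\
&= (x-\alpha)^{n-2}(x-2\alpha)(x-\alpha) - (x-\alpha)^n\big(d^{\,n}+d^{\,g}\big).
\end{align*}
(One should double-check the vertex count and the exponent bookkeeping, since the paper's conventions about whether a path of ``length one'' contributes an internal vertex matter here; but the shape $(x-\alpha)^{n-2}(x-2\alpha)(x-\alpha) - (x-\alpha)^n(d^n + d^g)$ is what I expect.) Crucially, the first term does not depend on $g$, so
\[
\phi_\alpha(C_{n,g+1}) - \phi_\alpha(C_{n,g}) = (x-\alpha)^n\big(d^{\,g} - d^{\,g+1}\big) = (x-\alpha)^n d^{\,g}(1-d).
\]

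Now I would argue as follows. First, by (ii) of Lemma \ref{lem14} we have $\rho_\alpha(C_{n,g}) > 1$, and for $x \ge \rho_\alpha(C_{n,g}) > 1$ one checks $0 < d < 1$, hence $1 - d > 0$ and $d^{\,g} > 0$; also $(x-\alpha)^n > 0$. Therefore $\phi_\alpha(C_{n,g+1}) - \phi_\alpha(C_{n,g}) > 0$ for all $x \ge \rho_\alpha(C_{n,g})$. Since $C_{n,g}$ is strongly connected, $A_\alpha(C_{n,g})$ is irreducible and $\phi_\alpha(C_{n,g})(\rho_\alpha(C_{n,g})) = 0$, so $\phi_\alpha(C_{n,g+1})(\rho_\alpha(C_{n,g})) > 0$. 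Moreover, because the leading coefficient of $\phi_\alpha$ is $1$ and $\rho_\alpha(C_{n,g+1})$ is the largest real root of $\phi_\alpha(C_{n,g+1})$, the polynomial $\phi_\alpha(C_{n,g+1})$ is positive for all $x > \rho_\alpha(C_{n,g+1})$ and, by the strict inequality just obtained together with the fact that $\phi_\alpha(C_{n,g+1})$ has no root in $[\rho_\alpha(C_{n,g}),+\infty)$, we conclude $\rho_\alpha(C_{n,g+1}) < \rho_\alpha(C_{n,g})$.

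The only genuinely delicate point is getting the exponents in the characteristic polynomial exactly right — i.e., correctly identifying $C_{n,g}$ as a generalized theta digraph with the right path-length partition and the right total vertex count, so that the $g$-dependent term really is $-(x-\alpha)^n(d^n+d^g)$ with the $g$-free part cancelling in the difference. Once that bookkeeping is pinned down, the rest is the routine sign analysis above. An alternative, essentially equivalent route would be to apply Lemma \ref{lems111} (or the argument in the proof of Theorem \ref{thm2}(2)) directly, treating $C_{n,g} = \widetilde{\Theta}_n(n+1-g,1;\,1)$-type digraphs and comparing the $F$-values $F(\mathbf{K}_1,\mathbf{K}_2)$; increasing $g$ by one replaces a factor $d^{\,g}$ by $d^{\,g+1}$ in $F$, which strictly decreases $F$, hence strictly decreases $\rho_\alpha$. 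I would present the direct polynomial computation as the main line since it is self-contained and makes the cancellation transparent.
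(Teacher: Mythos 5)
Your argument is correct, but it is a genuinely different route from the paper's. The paper proves this lemma with an eigenvector argument: it takes the $\alpha$-Perron vector of $C_{n,g+1}$, uses part (i) of Lemma \ref{lem14} to get the monotonicity $x_n<x_1<x_2<\dots<x_{g+1}$ along the outdegree-one path, observes that $C_{n,g}\cong C_{n,g+1}-(u_{g+1},u_1)+(u_{g+1},u_2)$, and then invokes the edge-rotation result Theorem \ref{thml3}. You instead identify $C_{n,g}\cong\widetilde{\Theta}(n+1-g,1;g-1)$ and compute via Corollary \ref{cors1} that $\phi_\alpha(C_{n,g})=(x-\alpha)^{n-1}(x-2\alpha)-(x-\alpha)^n(d^n+d^g)$, so that the $g$-free part cancels in the difference and $\phi_\alpha(C_{n,g+1})-\phi_\alpha(C_{n,g})=(x-\alpha)^n d^g(1-d)>0$ for $x>1$; your exponent bookkeeping checks out ($C_{n,g}$ has $n$ vertices and $n+1$ arcs, and $(d^{n+1-g}+d)d^{g-1}=d^n+d^g$). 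This is essentially the same mechanism as Lemma \ref{lems111}/Theorem \ref{thm2}, as you note. What your route buys: it is self-contained given Corollary \ref{cors1}, and it delivers the \emph{strict} inequality directly, whereas the paper's Theorem \ref{thml3} as stated only yields $\rho_\alpha(G')\ge\rho_\alpha(G)$, so the strictness in the paper's proof is, strictly speaking, not fully justified by the cited result. What the paper's route buys is independence from the explicit characteristic-polynomial formula and a technique (Perron-vector comparison plus edge rotation) reusable for digraphs without such closed forms. One small point you should add: when $g=n-1$ the digraph $C_{n,g+1}=\overrightarrow{C_n}$ is not a generalized theta digraph, so your formula does not apply there; that boundary case must be handled separately (as the paper does) by noting $\rho_\alpha(\overrightarrow{C_n})=1<\rho_\alpha(C_{n,n-1})$, the latter following from part (ii) of Lemma \ref{lem14}.
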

	\begin{proof}
		It is clear $\rho_{\alpha}(\overrightarrow{C_n})=1$. Now we only need to show that $\rho_{\alpha}(C_{n, g+1})<\rho_{\alpha}(C_{n, g})$ for $2\le g\le n-2$.
		
		Let $\boldsymbol{x}=(x_1, x_2, \dots, x_n)^T$ be the $\alpha$  Perron vector  of $C_{n, g+1}$,  while $x_i$ corresponds to the vertex $u_i$. Clearly,  $C_{n, g}\cong C_{n, g+1}-(u_{g+1}, u_1)+(u_{g+1}, u_2)$. Since $\overrightarrow{P}=u_n u_1 u_2\dots u_{g+1}$ is directed path of $C_{n, g+1}$ with $d_{u_i}^+=1$ where $i\in\{1, 2, \dots, n\}$,  then by $(i)$ of Lemma \ref{lem14},  we have $x_n<x_1<x_2<\dots<x_{g+1}$. Then we have $\rho_{\alpha}(C_{n, g+1})<\rho_{\alpha}(C_{n, g})$ by Theorem \ref{thml3}.
	\end{proof}

	\begin{figure}
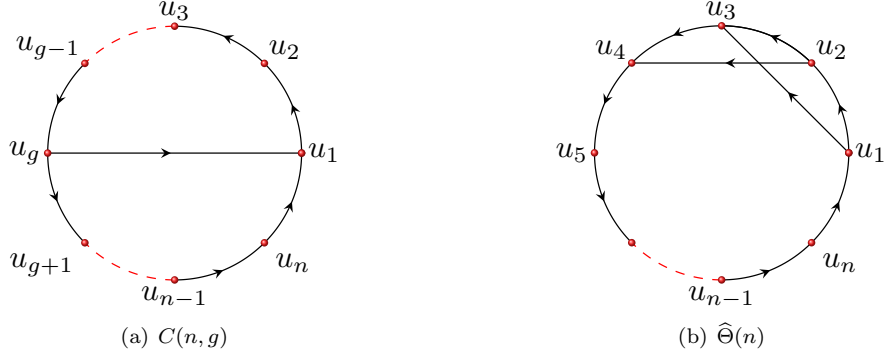

		\begin{center}
		  \subfigure[$C(n,g)$]{\includegraphics[page=5,  trim=0 0 0 0,  clip, width=0.4\textwidth]{figure}} \hfill
		  \subfigure[$\widehat{\Theta}(n)$]{\includegraphics[page=4,  trim=0 0 0 0,  clip, width=0.4\textwidth]{figure}}
		\end{center}
		\vspace*{-12pt}
		\caption{Two digraphs}
		\label{fig:twofig}
	  \end{figure}

	Take $\widehat{\Theta}(n)=\widetilde{\Theta}(2, 2; n-3)\cup \{(u_2, u_3)\}$ as depicted in (b) of Fig. \ref{fig:twofig}.
	
	\begin{thm}
		Among all strongly connected digraphs on $n(n\ge 4)$ vertices, $\overrightarrow{C_n}$, $\widetilde{\Theta}(2, 1; n-2)$ and  $\widetilde{\Theta}(2, 2; n-3)$ are the digraphs with  the first three minimal $\alpha$-spectral radius,  respectively. And  $\widetilde{\Theta}(3, 1; n-3)$ achieves the fourth minimal $\alpha$-spectral radius for $0\le \alpha \le \frac{1}{2}$.
	\end{thm}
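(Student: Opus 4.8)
The plan is to stratify the strongly connected digraphs on $n$ vertices by girth $g$ and to lean on three facts already in hand: $\rho_{\alpha}(\overrightarrow{C_n})=1$; for $2\le g\le n-1$ and $G\in\mathcal{G}_{n,g}$ one has $\rho_{\alpha}(G)\ge\rho_{\alpha}(C_{n,g})>1$ with equality only when $G\cong C_{n,g}$ (Lemma \ref{lem14}(ii)); and $\rho_{\alpha}(C_{n,g})$ is strictly decreasing in $g$ (Lemma \ref{lem16}), where $C_{n,g}\cong\widetilde{\Theta}(n+1-g,1;g-1)$. The first two entries are then immediate: any strongly connected $G\not\cong\overrightarrow{C_n}$ has $g(G)\le n-1$, so $\rho_{\alpha}(G)\ge\rho_{\alpha}(C_{n,n-1})=\rho_{\alpha}(\widetilde{\Theta}(2,1;n-2))>1$, with equality throughout exactly when $G\cong\widetilde{\Theta}(2,1;n-2)$.

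Next I would record, via Corollary \ref{cors1}, the closed forms
\begin{align*}
\phi_{\alpha}(\widetilde{\Theta}(2,1;n-2))&=(x-\alpha)^{n-1}(x-2\alpha)-(1-\alpha)^{n}-(x-\alpha)(1-\alpha)^{n-1},\\
\phi_{\alpha}(\widetilde{\Theta}(2,2;n-3))&=(x-\alpha)^{n-1}(x-2\alpha)-2(x-\alpha)(1-\alpha)^{n-1},\\
\phi_{\alpha}(\widetilde{\Theta}(3,1;n-3))&=(x-\alpha)^{n-1}(x-2\alpha)-(1-\alpha)^{n}-(x-\alpha)^{2}(1-\alpha)^{n-2},
\end{align*}
from which the perfect-power identities
\begin{align*}
\phi_{\alpha}(\widetilde{\Theta}(2,1;n-2))-\phi_{\alpha}(\widetilde{\Theta}(2,2;n-3))&=(1-\alpha)^{n-1}(x-1),\\
\phi_{\alpha}(\widetilde{\Theta}(3,1;n-3))-\phi_{\alpha}(\widetilde{\Theta}(2,2;n-3))&=-(1-\alpha)^{n-2}(x-1)^{2}
\end{align*}
follow. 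Since each of these polynomials is monic with largest root larger than $1$, evaluating at the appropriate root gives the strict chain $\rho_{\alpha}(\widetilde{\Theta}(2,1;n-2))<\rho_{\alpha}(\widetilde{\Theta}(2,2;n-3))<\rho_{\alpha}(\widetilde{\Theta}(3,1;n-3))$ for every $\alpha\in[0,1)$.

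The structural core is a classification of the strongly connected digraphs of girth exactly $n-1$. Fixing a cycle $C=u_1u_2\cdots u_{n-1}u_1$ and letting $w$ be the remaining vertex, the girth hypothesis forbids chords on $C$ and forces every in-neighbour $u_i$ and out-neighbour $u_j$ of $w$ to satisfy $u_j\in\{u_{i+1},u_{i+2}\}$ (indices modulo $n-1$); a short case check then shows $w$ has at most two in-neighbours and at most two out-neighbours, and not both, so that $G$ is isomorphic to $\widetilde{\Theta}(2,1;n-2)$, to $\widetilde{\Theta}(2,2;n-3)$, or to the tricyclic digraph $\widehat{\Theta}(n)=\widetilde{\Theta}(2,2;n-3)\cup\{(u_2,u_3)\}$; in the last case $\widetilde{\Theta}(2,2;n-3)$ embeds as a proper strongly connected subdigraph, so Lemma \ref{lemc2}(ii) gives $\rho_{\alpha}(\widehat{\Theta}(n))>\rho_{\alpha}(\widetilde{\Theta}(2,2;n-3))$. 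Combining this with the girth stratification settles the third entry: if $G\not\cong\overrightarrow{C_n},\widetilde{\Theta}(2,1;n-2)$ and $g(G)\le n-2$ then $\rho_{\alpha}(G)\ge\rho_{\alpha}(C_{n,n-2})=\rho_{\alpha}(\widetilde{\Theta}(3,1;n-3))>\rho_{\alpha}(\widetilde{\Theta}(2,2;n-3))$, while if $g(G)=n-1$ then either $G\cong\widetilde{\Theta}(2,2;n-3)$ or $\rho_{\alpha}(G)>\rho_{\alpha}(\widetilde{\Theta}(2,2;n-3))$. For the fourth entry, also exclude $\widetilde{\Theta}(2,2;n-3)$: $g(G)\le n-3$ gives $\rho_{\alpha}(G)\ge\rho_{\alpha}(C_{n,n-3})>\rho_{\alpha}(C_{n,n-2})=\rho_{\alpha}(\widetilde{\Theta}(3,1;n-3))$; $g(G)=n-2$ gives $\rho_{\alpha}(G)\ge\rho_{\alpha}(\widetilde{\Theta}(3,1;n-3))$ with equality iff $G\cong C_{n,n-2}\cong\widetilde{\Theta}(3,1;n-3)$; and $g(G)=n-1$ leaves only $G\cong\widehat{\Theta}(n)$.

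I expect the main obstacle to be this last case: showing $\rho_{\alpha}(\widehat{\Theta}(n))>\rho_{\alpha}(\widetilde{\Theta}(3,1;n-3))$. A Schwenk-type expansion (Lemma \ref{lemd3}) gives $\phi_{\alpha}(\widehat{\Theta}(n))=(x-\alpha)^{n-2}(x-2\alpha)^{2}-(1-\alpha)^{n-1}(2x-4\alpha+1)$, whence
\[
\phi_{\alpha}(\widehat{\Theta}(n))-\phi_{\alpha}(\widetilde{\Theta}(3,1;n-3))=-\alpha(x-\alpha)^{n-2}(x-2\alpha)+(1-\alpha)^{n-2}\bigl((x-1)^{2}-(1-\alpha)(1-2\alpha)\bigr).
\]
Unlike the earlier differences this is not a single perfect power, and its sign at $x=\rho_{\alpha}(\widetilde{\Theta}(3,1;n-3))$ is what must be pinned down. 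Here I would use that both digraphs have maximum out-degree $2$, so $\rho_{\alpha}(\widetilde{\Theta}(3,1;n-3))\le 2$ by Lemma \ref{lemc2}(iii), together with the defining equation of this root, to show that for $0\le\alpha\le\tfrac12$ (where $(1-\alpha)(1-2\alpha)\ge0$) the displayed difference is negative at that root; then $\phi_{\alpha}(\widehat{\Theta}(n))<0$ there and hence $\rho_{\alpha}(\widehat{\Theta}(n))>\rho_{\alpha}(\widetilde{\Theta}(3,1;n-3))$, which is the unique fourth minimizer. The hypothesis $0\le\alpha\le\tfrac12$ enters precisely in controlling the sign of this difference, which is why the fourth-minimizer part of the theorem is stated only on that range; the small cases $n=4,5$ can be checked by direct computation.
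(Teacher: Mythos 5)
Your proposal follows the paper's proof essentially step for step: both stratify by girth, use Lemma \ref{lem14}(ii) and Lemma \ref{lem16} to dispose of girths $\le n-2$, use the (asserted, in the paper) fact that $\mathcal{G}_{n,n-1}=\{\widetilde{\Theta}(2,1;n-2),\widetilde{\Theta}(2,2;n-3),\widehat{\Theta}(n)\}$ together with subgraph monotonicity for $\widehat{\Theta}(n)\supset\widetilde{\Theta}(2,2;n-3)$, and reduce the remaining comparisons to explicit characteristic-polynomial differences (your perfect-power identities are correct and substitute for the paper's appeal to the majorization Lemma \ref{lems1}). The one step you leave as a plan rather than a computation --- showing $\rho_{\alpha}(\widetilde{\Theta}(3,1;n-3))<\rho_{\alpha}(\widehat{\Theta}(n))$ for $0\le\alpha\le\tfrac12$ --- is carried out in the paper exactly as you describe: one bounds $d^{3-n}\ge 1$, reduces the difference to a quadratic $\Delta_2(x)=(2\alpha-1)x^2+(2-3\alpha^2-2\alpha)x+5\alpha^2-3\alpha$, and verifies its positivity on $(1,2]$ from the endpoint values $\Delta_2(1)=(1-\alpha)(1-2\alpha)$ and $\Delta_2(2)=\alpha(1-\alpha)$, using $\rho_{\alpha}\le\Delta^+=2$ as you indicate.
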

	
	\begin{proof}
		Noting that $\mathcal{G}_{n, n}=\{\overrightarrow{C_n}\}$, $\mathcal{G}_{n, n-1}=\{\widetilde{\Theta}(2, 1; n-2), \widetilde{\Theta}(2, 2; n-3), \widehat{\Theta}(n)\}$, $C_{n, n-1}\cong \widetilde{\Theta}(2, 1; n-2)$ and $C_{n, n-2}\cong\widetilde{\Theta}(3, 1; n-3)\in \mathcal{G}_{n, n-2}$.
		By $(ii)$ of Lemma \ref{lem14} and Lemma \ref{lem16}, it is clear that $$\rho_{\alpha}(\overrightarrow{C_n})<\rho_{\alpha}(\widetilde{\Theta}(2, 1; n-2)) <\min\{\rho_{\alpha}(\widetilde{\Theta}(2, 2; n-3)), \rho_{\alpha}(\widehat{\Theta}(n)),\rho_{\alpha}(\widetilde{\Theta}(3, 1; n-3))\}.$$ 
		
		It is sufficient to show that
		$\rho_{\alpha}(\widetilde{\Theta}(2, 2; n-3))<\rho_{\alpha}(\widetilde{\Theta}(3, 1; n-3))<\rho_{\alpha}(\widehat{\Theta}(n))$. 
		
		By $(ii)$ of Lemma \ref{lemc2} and $\widehat{\Theta}(n)=\widetilde{\Theta}(2, 2; n-3)\cup \{(u_2, u_3)\}$,  then we have $\rho_{\alpha}(\widetilde{\Theta}(2, 2; n-3))<\rho_{\alpha}(\widehat{\Theta}(n)).$ 
		
		Take $\mathbf{K}_1=(2,2),\ \mathbf{K}'_1 =(3,1),\ \mathbf{K}_2=(n-3),\  \mathbf{K}'_2=(n-3)$, and $\mathbf{K}_1 \prec \mathbf{K}'_1$,\ $\mathbf{K}_2 =\mathbf{K}'_2 $. By Lemma \ref{lems1}, we have $\rho_{\alpha}(\widetilde{\Theta}(2, 2; n-3))<\rho_{\alpha}(\widetilde{\Theta}(3, 1; n-3)).$
		
		By direct calculation, we have
		\begin{equation}\label{chap:theta1}
		\begin{aligned}
		\phi_{\alpha}(\widehat{\Theta}(n))= & (x-2\alpha)^2(x-\alpha)^{n-2}-(x-2\alpha)(1-\alpha)^{n- 1} \\
		& -(x-\alpha)(1-\alpha)^{n-1}-(1-\alpha)^n.
		\end{aligned}
		\end{equation}
		
		$\Delta (x)  =(\phi_{\alpha}(\widetilde{\Theta}(3, 1; n-3)-\phi_{\alpha}(\widehat{\Theta}(n))(1-\alpha)^{3-n}$. By Formulae \eqref{theta:chp} and  \eqref{chap:theta1},   we have
		\begin{align*}
		\Delta(x) & =\alpha(x-2\alpha)(x-\alpha)d^{3-n}-[(x-1)^2+(2\alpha-1)(1-\alpha)](1-\alpha) \\
		& \ge \alpha(x-2\alpha)(x-\alpha)-[(x-1)^2+(2\alpha-1)(1-\alpha)](1-\alpha)     \\
		\,        & =(2\alpha-1)x^2+(2-3\alpha^2-2\alpha)x+5\alpha^2-3\alpha=\Delta_2(x) .
		\end{align*}
		When $0\le \alpha<\frac{1}{2}$,  $2\alpha-1<0$,  then $\Delta_2(1)=2\alpha^2-3\alpha+1>0$ and $\Delta_2(2)=\alpha(1-\alpha)>0$, for any $1\le x\le 2$, we have $\Delta_2(x)>0$;
		when $\alpha=\frac{1}{2}$,  $\Delta_2(x)=\frac{1}{4}(x-1)>0$,  for any $max\{1, 2\alpha\}<x<2$. Then we have $\Delta(x)>0$, when $0\le \alpha\le  \frac{1}{2}$.
		So,  we have that $\rho_{\alpha}(\widetilde{\Theta}(3, 1; n-3))<\rho_{\alpha}(\widehat{\Theta}(n)).$
		
	\end{proof}
	
	We finish this section with the following conjecture.
	\begin{con}
		When $\frac{1}{2}< \alpha<1$, then
		$$\rho_{\alpha}(\widetilde{\Theta}(3, 1; n-3))<\rho_{\alpha}(\widehat{\Theta}(n)).$$
	\end{con}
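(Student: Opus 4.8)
The plan is to establish $\rho_\alpha(\widetilde{\Theta}(3,1;n-3)) < \rho_\alpha(\widehat{\Theta}(n))$ by the same polynomial-comparison strategy used in the $0\le\alpha\le\tfrac12$ case, but this time keeping -- rather than estimating away -- the defining equation $\phi_\alpha(\widetilde{\Theta}(3,1;n-3))(r)=0$ of the smaller root. Write $P_1=\phi_\alpha(\widetilde{\Theta}(3,1;n-3))$, $P_2=\phi_\alpha(\widehat{\Theta}(n))$, $r_1=\rho_\alpha(\widetilde{\Theta}(3,1;n-3))$, $r_2=\rho_\alpha(\widehat{\Theta}(n))$ and $D=P_1-P_2$. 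Since $\widehat{\Theta}(n)$ is strongly connected (it is $\widetilde{\Theta}(2,2;n-3)$ with one arc added), $P_2$ is monic with $P_2>0$ on $(r_2,\infty)$, so it suffices to show $P_2(r_1)<0$, equivalently (as $P_1(r_1)=0$) that $D(r_1)>0$. By Corollary \ref{cors1} and \eqref{chap:theta1}, $D(x)=\alpha(x-2\alpha)(x-\alpha)^{n-2}-(x-\alpha)^2(1-\alpha)^{n-2}+(2x-3\alpha)(1-\alpha)^{n-1}$, which is exactly the $\Delta(x)$ of the preceding proof multiplied by $(1-\alpha)^{n-3}$.

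The decisive step is to kill the $n$-dependence using $P_1(r_1)=0$, where $P_1(x)=(x-\alpha)^{n-1}(x-2\alpha)-(x-\alpha)^2(1-\alpha)^{n-2}-(1-\alpha)^n$. Substituting $(r_1-\alpha)^2(1-\alpha)^{n-2}=(r_1-\alpha)^{n-1}(r_1-2\alpha)-(1-\alpha)^n$ into $D(r_1)$ collapses it to $D(r_1)=-(r_1-2\alpha)^2(r_1-\alpha)^{n-2}+(2r_1-4\alpha+1)(1-\alpha)^{n-1}$; then solving $P_1(r_1)=0$ once more for $(r_1-\alpha)^{n-2}$ (valid since $r_1>2\alpha$) and dividing by $(1-\alpha)^{n-2}>0$ shows that $D(r_1)>0$ is equivalent to the single-variable cubic inequality $\Phi(w):=(2w-2\alpha+1)(1-\alpha)w-(w-\alpha)\bigl(w^2+(1-\alpha)^2\bigr)>0$, where $w:=r_1-\alpha$; equivalently $\Phi(w)=-w^3+(2-\alpha)w^2-\alpha(1-\alpha)w+\alpha(1-\alpha)^2$.

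It then remains to locate $w$ and to check $\Phi>0$ there. From $P_1(2\alpha)=-\alpha^2(1-\alpha)^{n-2}-(1-\alpha)^n<0$ we get $r_1>2\alpha$, i.e. $w>\alpha$. For the upper bound, $P_1(1+\alpha)=(1-\alpha)\bigl[1-(1-\alpha)^{n-3}-(1-\alpha)^{n-1}\bigr]$, and for $\alpha\in(\tfrac12,1)$ and $n\ge 4$ one has $1-\alpha<\tfrac12$, so $(1-\alpha)^{n-3}+(1-\alpha)^{n-1}<\tfrac12+\tfrac18<1$ and the bracket is positive; hence $r_1<1+\alpha$, i.e. $w\in(\alpha,1)$. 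Finally, on $[\alpha,1]$ the cubic $\Phi$ is concave because $\Phi''(w)=-2(3w-2+\alpha)<0$ whenever $w\ge\alpha>\tfrac12$, so its minimum over $[\alpha,1]$ is attained at an endpoint; since $\Phi(\alpha)=\alpha(1-\alpha)>0$ and $\Phi(1)=(1-\alpha)^2(1+\alpha)>0$, we conclude $\Phi(w)>0$, hence $D(r_1)>0$, hence $r_1<r_2$.

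I expect the main obstacle to be the middle step: performing the substitutions so that an $n$-dependent polynomial comparison reduces to a fixed low-degree inequality, together with obtaining the sharp a priori bound $r_1<1+\alpha$ (uniform in $n$ and $\alpha$). This bound is genuinely needed: the reduced inequality $\Phi(w)>0$ is false when $w$ is close to $2-\alpha$ -- for instance $\Phi(2-\alpha)=-\tfrac14<0$ at $\alpha=\tfrac12$ -- so the trivial estimate $r_1<2$ would not suffice, whereas $r_1<1+\alpha$ does. Everything else is elementary, and the computation $(1-\alpha)^{n-3}+(1-\alpha)^{n-1}<1$ already covers the boundary case $n=4$.
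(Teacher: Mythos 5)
You are attempting a statement that the paper itself leaves \emph{open}: it appears only as a conjecture, with no proof supplied (the authors prove the analogous inequality only for $0\le\alpha\le\frac12$), so there is no argument of theirs to compare against. Your reduction is, as far as I can verify, correct and would settle the conjecture once one step is repaired. The expressions for $P_1$, $P_2$ and $D$ agree with Corollary \ref{cors1} and Formula \eqref{chap:theta1}; using $P_1(r_1)=0$ twice to eliminate $n$ is the genuinely new idea here, and it correctly yields $D(r_1)>0\iff\Phi(w)>0$ with $\Phi(w)=-w^3+(2-\alpha)w^2-\alpha(1-\alpha)w+\alpha(1-\alpha)^2$ and $w=r_1-\alpha$; the values $\Phi(\alpha)=\alpha(1-\alpha)$ and $\Phi(1)=(1-\alpha)^2(1+\alpha)$, the concavity of $\Phi$ on $[\alpha,1]$ for $\alpha>\frac12$, and the final inference $P_2(r_1)=-D(r_1)<0\Rightarrow r_1<r_2$ (valid since $P_2$ is monic with largest real root $r_2$) all check out.

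The one genuine gap is the deduction ``$P_1(1+\alpha)>0$, hence $r_1<1+\alpha$.'' For the lower bound you used the valid implication $P_1(2\alpha)<0\Rightarrow r_1>2\alpha$ (because $P_1\ge0$ on $[r_1,\infty)$), but the converse direction fails in general: a monic polynomial can be positive at a point lying strictly below its largest real root (consider $(x-1)(x-2)(x-3)$ at $x=1.5$). To conclude $r_1<1+\alpha$ you must show $P_1>0$ on the whole ray $[1+\alpha,\infty)$, not merely at its endpoint. Fortunately this is a one-line addition: for $x\ge 1+\alpha$ one has $x-\alpha\ge1$ and $x-2\alpha\ge 1-\alpha>0$, so $(x-\alpha)^{n-1}(x-2\alpha)\ge(x-\alpha)^2(x-2\alpha)$ and therefore
\begin{equation*}
P_1(x)\ \ge\ (x-\alpha)^2\bigl[(x-2\alpha)-(1-\alpha)^{n-2}\bigr]-(1-\alpha)^n\ \ge\ (1-\alpha)\bigl[1-(1-\alpha)^{n-3}-(1-\alpha)^{n-1}\bigr]\ >\ 0
\end{equation*}
for $\alpha>\frac12$ and $n\ge4$, which is exactly the quantity you already evaluated at $x=1+\alpha$. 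With this inserted, $r_1<1+\alpha$, hence $w\in(\alpha,1)$, and the rest of your argument goes through; your remark that $\Phi(2-\alpha)=-\alpha(1-\alpha)<0$ correctly explains why a bound sharper than the trivial $r_1\le\Delta^+=2$ is indispensable.
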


	\addcontentsline{toc}{section}{References}

\end{document}